\newtheorem{theorem}{Theorem}[section]
\newtheorem{claim}[theorem]{Claim}
\newtheorem{lemma}[theorem]{Lemma}
\newtheorem{proposition}[theorem]{Proposition}
\newtheorem{remark}[theorem]{Remark}
\def\D{\mathcal{D}}
\def\R{\mathbb{R}}
\def\T{\mathcal{T}}
\def\N{\mathcal{N}}
\def\G{\mathcal{G}}
\def\nrfootnote{\@ifnextchar[\@xfootnote{\stepcounter\@mpfn
\protected@xdef\@thefnmark{\thempfn}%
\@footnotetext}}
\def\blfootnote{\xdef\@thefnmark{}\@footnotetext}
\begin{document}

\singlespacing

\title{Well-posedness of compressible Euler equations \\ in a physical vacuum}

\author{Juhi Jang\thanks{Courant Institute. Email: \;juhijang@cims.nyu.edu \; Supported in part by NSF Grant DMS-0908007}  \; and \;  Nader Masmoudi\thanks{Courant Institute. Email: masmoudi@cims.nyu.edu Supported in part by NSF Grant DMS-0703145}}

\date{\today}

\maketitle

\begin{abstract}
An important  problem in gas and fluid dynamics is to understand the behavior of vacuum states, namely the behavior of the system in the presence of vacuum. In particular,  physical vacuum, in which the boundary moves with a nontrivial finite normal  acceleration,  naturally arises in the study of the motion of gaseous stars or shallow water. Despite its importance,  there are only  few mathematical results available near vacuum. The main difficulty lies in the fact that the physical systems become degenerate along the vacuum boundary.  In this paper, we establish the local-in-time well-posedness of  three-dimensional compressible Euler equations for polytropic gases with  physical vacuum by considering the problem as a  free boundary  problem.  
\end{abstract}

\blfootnote{2010 \textit{Mathematics Subject Classification.} Primary 35L65, 35L80, 35R35, 76N10}

\blfootnote{Keywords.  Compressible Euler equations, physical vacuum, free boundary} 


\section{Introduction}

Compressible Euler equations of isentropic, ideal gas dynamics take the form in Eulerian coordinates as follows:
\begin{equation}\label{EE}
 \begin{split}
  \partial_t\rho+\text{div} (\rho u)&=0\,,\\
\partial_t(\rho u)+\text{div} (\rho u\otimes u)+\text{grad}\,p &=0\,.
 \end{split}
\end{equation}
Here $\rho\,,\,u$ and $p$
denote respectively the
density, velocity, and pressure of gas. In considering the polytropic gases,
the constitutive relation, which is also
called the equation of state, is given by
\begin{equation}\label{gamma}
 p=K\rho^\gamma
\end{equation}
 where $K$ is an
entropy constant and $\gamma>1$ is the adiabatic gas exponent. 

Compressible Euler system \eqref{EE} is a fundamental example of a system of hyperbolic conservation laws.  The first and second equations express respectively
 conservation of mass and momentum. It is well-known \cite{Daf} that the system \eqref{EE} is strictly hyperbolic if the density is bounded below from zero: $\rho>0$.  When the initial density function contains a vacuum, the vacuum boundary $\Gamma$ is defined as
\[
 \Gamma=cl\{(t,x):\rho(t,x)>0\}\cap cl\{(t,x):
\rho(t,x)=0\}\,
\]
where $cl$ denotes the closure. It is convenient to introduce the
sound speed $c$ of the Euler equations \eqref{EE}
$$c=\sqrt{\frac{d}{d\rho}p(\rho)}\quad (=\sqrt{K\gamma}\rho^{\frac{\gamma-1}{ 2}}
\text{ for polytropic gases})$$ We recall that for one-dimensional flows, $u\pm c$ are the characteristic speeds of the system. If $\Gamma$ is nonempty,
the Euler system fails to be strictly hyperbolic
along $\Gamma$, namely degenerate hyperbolic. 
 
A vacuum boundary $\Gamma$ is called \textit{physical} if the normal acceleration near the boundary  is
bounded from below and above: 
\begin{equation}\label{pvb}
-\infty < \frac{\partial c^2}{\partial n}<0
\end{equation}
in a small neighborhood of the boundary, where
$n$ is the outward unit normal to $\Gamma$.  In other words, the pressure or the enthalpy 
$(=c^2/(\gamma-1))$ accelerates the boundary in the normal direction. This physical vacuum can be realized
by some self-similar solutions and
stationary solutions for different physical systems such as Euler
equations with damping, Euler-Poisson or  Navier-Stokes-Poisson systems for gaseous stars \cite{J0,J,LY2,Y}. 

  Recently, the physical vacuum has gotten a great deal of attention among mathematical community 
 (see \cite{IMA} and the review paper \cite{JM1}).  
Despite its physical importance, the local existence theory of
classical solutions featuring the physical vacuum boundary even for
one-dimensional flows was only  established recently. This is because
if the physical vacuum boundary condition \eqref{pvb} is assumed,
the classical theory of hyperbolic systems can not be directly applied
\cite{LY2,Y}: the characteristic speeds $u\pm c$ become singular with
infinite spatial derivatives near the vacuum boundary and this
singularity creates an analytical difficulty in standard Sobolev spaces. Local existence
for  the physical vacuum states  was given by the authors
\cite{JM}.  In \cite{JM}, the authors consider  
 the  one-dimensional Euler equations in mass Lagrangian
coordinates. Existence was proved  using  a new structure lying upon the
physical vacuum  in the framework of free boundary problems. The nonlinear energy spaces in \cite{JM} are  designed to guarantee such \textit{minimal} regularity that the physical vacuum \eqref{pvb} is realized.  Coutand and Shkoller \cite{CS09}
constructed $H^2$-type solutions with moving boundary in Lagrangian coordinates based on Hardy inequalities and  degenerate parabolic regularization. 

For multi-dimensional flows, parallel to a lot of activity and important progress in free surface boundary problems,  in particular regarding incompressible fluids \cite{ASL08,AM05,AM,CS07,GMS09,Lannes05,SZ08,Wu09,ZZ08}, more recently, there are some works trying to prove the local well-posedness of physical vacuum in 3D.    
Coutand, Lindblad and Shkoller \cite{CSL09} 
established a priori estimates based on time differentiated energy estimates and 
elliptic estimates for normal derivatives for $\gamma=2$.  As noted in \cite{CSL09}, in order to carry out the additional elliptic estimates,
sufficient smoothness of solutions was assumed and its justification by their energy 
should require additional work.  The purpose of this article is to provide a new analysis of physical vacuum based on a hyperbolic type of weighted energy estimates of tangential and normal derivatives and to establish the local well-posedness of the three-dimensional  Euler system with free surface boundary which moves with nonzero finite acceleration towards normal to the boundary. 
Independently of this work, Coutand and Shkoller 
\cite{CS10} extended their one-dimensional methodology and combined it  with the a priori estimates given in \cite{CSL09} to construct smooth solutions  of  the three-dimensional Euler equations. We were also informed that  Lindblad \cite{Lind}
has a similar result using the linearized compressible Euler equations with
a Nash-Moser iteration.  The methods are very  different.

\subsection{Existence theories of compressible Euler flows}

Before we formulate our problem, we briefly review some existence theories of compressible  flows with vacuum states from various aspects. We will not attempt to address exhaustive references in this paper.   In the absence of vacuum, namely when the system is strictly hyperbolic everywhere, one can use the theory of symmetric
hyperbolic systems developed by Friedrichs-Lax-Kato \cite{Fri,Kato,Lax} to construct smooth solutions;
for instance, see Majda \cite{Majda84}. The breakdown of classical solutions was demonstrated by Sideris \cite{Sideris85}.

When the initial datum is compactly supported, there are at least three  ways of looking
at the problem. The first consists in solving  the Euler equations
in the whole space and requiring that the system \eqref{EE} holds in the sense 
of distribution  for all $x \in \mathbb{R}^d$
and $t \in [0,T]$.  This is in particular the strategy used to 
construct global weak solutions (see for instance Diperna \cite{Diperna83} 
and \cite{Chen97,LPS96}).  The second way consists in symmetrizing the system 
first and then solving it using the theory of symmetric hyperbolic system. 
Again the symmetrized form has to be solved in the whole space. 
 The third  way is to require the Euler equations
to hold on the set $\{(t,x):\rho(t,x)>0\}$ and write an equation for $\Gamma$.
 Here, the vacuum boundary $\Gamma$ is part of the unknown: this is a free boundary problem and
in this case, an appropriate boundary condition at vacuum is necessary.
 
In the first and second  ways, 
there is no need of knowing exactly the
position of the vacuum boundary. DiPerna used the theory of compensated compactness 
to pass to the limit weakly in a parabolic approximation of the system and recovered 
a weak solution of the Euler system (see also \cite{LPS96} where a  kinetic formulation 
of the system was also used). 
  Makino, Ukai and Kawashima \cite{MUK86} wrote
the system in a symmetric hyperbolic form which allows the density
to vanish. The system they get is not equivalent to the Euler
equations when the density vanishes. This special symmetrization was also used for the
Euler-Poisson system. This  formulation was also used by Chemin \cite{Chemin90} to prove
the  local existence of regular solutions in the sense that $
c, u \in C([0,T); H^m(\mathbb{R}^d)) $ for
some $m > 1 + d/2$ and $d$ is the space dimension (see also Serre
\cite{S} and Grassin
\cite{Grassin98}, for some global existence result of classical solutions
under some special conditions on the initial data, by extracting a
dispersive effect after some invariant transformation). However, it was noted in \cite{Makino92,MU87} that
the requirement that $c$ is continuously
differentiable excludes many interesting solutions such as the
stationary solutions of the Euler-Poisson system which have a behavior of
the type  $\rho \sim |x-x_0|^{\frac{1} {\gamma -1}}$, namely $c^2\sim |x-x_0|$ near the vacuum
boundary. Indeed, Nishida in \cite{Nishida} suggested to consider a free boundary problem which includes this kind of
 singularity caused by vacuum, not shock wave singularity.

For the third  way: the free boundary problem, we divide into a few cases according to the initial behavior of the sound speed $c$. For simplicity, let the origin be the initial vacuum contact point ($x_0=0$). And let $c\sim |x|^h$.
 When $h\geq 1$, namely initial contact to vacuum is sufficiently smooth, Liu and Yang
\cite{LY1} constructed the local-in-time solutions to one-dimensional Euler equations with
damping by using the energy method  based on the
adaptation of the theory of symmetric hyperbolic system. They also prove that $c^2$ can not be smooth across
$\Gamma$ after a finite time. We note that in these regimes there is no acceleration along the vacuum
boundary. For $0<h<1$, the initial contact to vacuum is only Holder continuous. In particular,
the corresponding behavior to $h=1/2$ is the case of physical vacuum \cite{CS09,CS10,CSL09,JM}.  For $0<h<1/2$ and $1/2<h<1$, its boundary behavior is believed  to be ill-posed;  
indeed, we conjecture that it should instantaneously change into the physical vacuum. However,
there is no mathematical justification available so far. 

The case $h=0$ is when there is no continuous initial contact of the density with vacuum.
It can be considered as either
Cauchy problem or free boundary problem. An example of Cauchy problem when $h=0$ is the 
  Riemann problem for
genuinely discontinuous initial datum (see for instance  \cite{Bouchut04,GB}).
An example of a free boundary problem when
$h=0$ is the work by Lindblad \cite{L1} where the density is positive at the vacuum boundary.  

Having the local existence theory of vacuum states, the next
important question is whether
such a local solution exists globally in
time or how it breaks down.
The study of vacuum free boundary
 automatically excludes the breakdown of solutions caused by vacuum,
which is one possible scenario of the breakdown for
positive solutions to compressible Euler equations \eqref{EE}.
It was shown in
\cite{LS} that the shock waves vanish at the vacuum and the singular
behavior is similar to the behavior of the centered rarefaction
waves corresponding to the case when $c$ is regular \cite{LY2}, which indicates that vacuum has a regularizing
effect. Therefore it would be very interesting to investigate
the long time behavior of vacuum states.  

When there is damping, based on self-similar behavior, Liu conjectured \cite{L2} that
time asymptotically, solutions to
Euler equations with damping should behave like the ones
to the porus media equation, where the canonical boundary
is characterized by the physical vacuum condition
\eqref{pvb}.  This conjecture was established by Huang, Marcati and Pan \cite{HMP} in the
 framework of the entropy solution where the method of compensated
compactness yields a global weak solution in $L^\infty$. But in their work, there is no way of
tracking the vacuum boundary. It would be interesting to investigate the asymptotic relationship between smooth solutions obtained by solving a free boundary problem of Euler equations with damping and smooth solutions of the porus media equation. 

\subsection{Other interesting vacuum states}\label{other}

Physical vacuum as well as other vacuum states appear in the theory of other physical systems,   where we believe the  methodology of this article could be applied. Here we briefly present some of them; see \cite{JM1} for more detailed discussion.  

The study of vacuum is  important in understanding viscous flows \cite{Fei,Lions}.
 When vacuum appears initially, studying Cauchy problems
for compressible Navier-Stokes equations with constant viscosity
coefficients yields somewhat negative results: for instance, a
finite time blow-up for nontrivial compactly supported initial
density \cite{Xin} and a failure of continuous dependence on initial
data \cite{HS}. There are some existence theories available with the
physical vacuum boundary for one-dimensional Navier-Stokes free boundary
problems \cite{LXY}, for three-dimensional Navier-Stokes-Poisson equations with radial symmetry \cite{J}, and for other related models  \cite{dz,mom}. On the other hand, to resolve the issue of no
continuous dependence on initial data in \cite{HS} for constant
viscosity coefficient, a density-dependent viscosity coefficient was
introduced in \cite{LXZ}. Since then, there has been a lot of
studies on global weak solutions for various models and
stabilization results under gravitation and external forces: see
\cite{LLX,ZF2} and the references therein. Despite 
significant progress over the years, many  interesting and important
questions are still  unanswered especially for general
multi-dimensional flows.

The relativistic Euler equations are known to be symmetric hyperbolic
away from vacuum \cite{MU95I}. A particular interest is
compactly supported relativistic flows which for instance can be
applied to the dynamics of stars in the context of special
relativity. Whether one can
extend the theory of free boundary problems including physical
vacuum developed for non-relativistic Euler equations to
relativistic case is an open problem. It turns out that vacuum states also arise in the theory of magnetohydrodynamics (MHD), another interesting system of hyperbolic conservation laws arising
from electromechanical phenomena \cite{Daf,Grad}. 
 Due to the interplay
between the scalar pressure of the fluid and the anisotropic
magnetic stress, vacuum states are richer than in hydrodynamics and
their rigorous study in the context of nonlinear partial
differential equations seems to be widely open.

Lastly, it is interesting to point out the connection between vacuum states of degenerate hyperbolic systems and degenerate elliptic and parabolic equations. One of the  main difficulty of  studying the Euler system with a 
 free boundary  is that 
it leads to a degenerate hyperbolic equation due to the fact that 
the density vanishes at the free boundary. 
For this we need techniques coming from degenerate elliptic equations; for instance, see \cite{Baouendi67}. Also, similar problems arise 
in degenerate parabolic equations  for instance, in  porous medium  equations \cite{CF}, 
in  thin film equations   \cite{GHO08}, 
in the study of polymeric flows \cite{Masmoudi08cpam}.

In the next section, we formulate the problem, introduce notations and state the main result.

\section{Lagrangian Formulation and Main Result}\label{2}

\subsection{Derivation of the system in Lagrangian  coordinates}
 
The boundary moves  with a finite normal acceleration under the 
  physical vacuum condition \eqref{pvb} and it is part of the unknown.  
The vacuum free boundary problem is studied in Lagrangian coordinates where the free boundary is fixed. 

For smooth solutions, the Euler equations \eqref{EE} can be written in Eulerian coordinates 
as :
\begin{equation}
 \begin{split}\label{EEE}
  (\partial_t+u\!\cdot\!\nabla)\rho +\rho(\nabla\!\cdot\!u)=0\,,\\
\rho (\partial_t+u\!\cdot\!\nabla)u + K\nabla\rho^\gamma =0\,.
 \end{split}
\end{equation}
Let $\eta(t,x)$ be the position of the gas particle $x$ at time $t$ so that
\begin{equation}\label{flow}
\eta_t = u(t, \eta(t,x))\text{ for }t>0 \text{ and }\eta(0,x)=x\, \text{ in } \Omega\,.
\end{equation}
As in \cite{CSL09}, we define the following Lagrangian quantities:
\[
\begin{split}
 v(t,x)&\equiv u(t, \eta(t,x))\;\text{(Lagrangian velocity)}\\ f(t,x)&\equiv \rho(t,\eta(t,x))\;
 \text{(Lagrangian density)}\\
A&\equiv[D\eta]^{-1}\;\text{(inverse of deformation tensor)}\\
J&\equiv\det D\eta\;\text{(Jacobian determinant)}\\ a&\equiv
JA\;\text{(transpose of cofactor matrix)}
\end{split}
\]
We use Einstein's summation convention and the notation $F,_k$ to denote the $k^{\text{th}}$-partial derivative of $F$: $\partial_kF$.  Both expressions will be used throughout the paper. We use $i,j,k,l,r,s$ to denote 1, 2, 3. 
The Euler equations \eqref{EEE} read as follows:
\begin{equation}\label{Lag}
\begin{split}
 f_t+fA^j_iv^i,_{j}=0\,,\\ fv_t^i+ KA_i^k f^\gamma,_{k}=0\,.
\end{split}
\end{equation}
Since
$$J_t=JA_i^jv^i,_j \text{ and }J(0)=1\,,$$
together with the equation for $f$, we find that $$fJ=\rho_0$$ where
$\rho_0$ is given initial density function. Thus, using
$A^k_i=J^{-1}a_i^k$, \eqref{Lag} reduce to the following:
\begin{equation}\label{Lag2}
\rho_0v^i_t +Ka_i^k({\rho_0}^\gamma{J}^{-\gamma}),_k=0
\end{equation}
along with
\begin{equation}\label{eta}
 \eta_t^i =v^i \,.
\end{equation}

Now let $w$ be 
\begin{equation}\label{w}
w\equiv K\rho_0^{\gamma-1}\,.
\end{equation}
Note that $\tfrac{\gamma}{\gamma-1}w$ is the initial enthalpy. 
We are interested in   smooth initial enthalpy profiles satisfying the physical vacuum condition \eqref{pvb}: 
\begin{equation}\label{wcondition}
\begin{split}
w=0 \text{ on } \partial\Omega\,(=\Gamma(0))\,, \; w>0 \text{ in } \Omega\,,\\
\frac{1}{C} d(x)\leq  w(x) \leq C d(x)\text{ in } \Omega\,,
\end{split}
\end{equation}
where $d(x)$ is the distance function to the boundary. 
The equation \eqref{Lag2} now takes the form
\begin{equation}\label{general}
w^\alpha v^i_t +(w^{1+\alpha}\,A_i^k{J}^{-1/\alpha}),_k=0\,,
\end{equation}
where $$\alpha\equiv\frac{1}{\gamma-1}\;.$$ We have used the Piola
identity \eqref{Piola} to get \eqref{general} from \eqref{Lag2}. Note that $\alpha>0$ and
$\alpha\rightarrow\infty$ as $\gamma\rightarrow 1$. Since
$\eta^i_t=v^i$, the equation \eqref{general} reads as an $\eta$ equation:
\begin{equation}\label{deacoustic}
w^\alpha \eta^i_{tt}
+(w^{1+\alpha}\,A_i^k{J}^{-1/\alpha}),_k=0\,.
\end{equation}
Thus the equation \eqref{deacoustic} can be viewed as a degenerate nonlinear acoustic
(wave) equation for $\eta$. Multiply \eqref{deacoustic} by
$\eta_t^i$ and integrate over $\Omega$: the zeroth order energy estimates formally lead to the
following energy conservation
\begin{equation}\label{energy}
\frac{d}{dt}\int_\Omega \left\{\frac{1}{2}w^\alpha |v|^2+\alpha
w^{1+\alpha} J^{-1/\alpha}\right\} dx =0 \text{ denoted by
}\frac{dE}{dt}=0\,.
\end{equation}
Note that due to the vanishing factor $w^{1+\alpha}$ at the 
free boundary, all the boundary terms from the
integration by parts disappeared. This energy conservation is
equivalent to the more familiar form in Eulerian coordinates:
\[
 \frac{d}{dt}\int_{\Omega(t)}\left\{\frac12\rho |u|^2 +\frac{p}{\gamma-1} \right\}dx =0
\]
which is the conservation of the physical energy. 

\begin{remark}
The degenerate  nonlinear acoustic system  \eqref{deacoustic} is in a sense equivalent 
to the Euler equation \eqref{EEE} with physical vacuum. Notice that the initial datum  
$w = K \rho_0^{\gamma-1}$ is a parameter in the equation and that one recovers the 
Lagrangian density by taking $f  = {\rho_0}{J}^{-1}$. 
\end{remark}

\subsection{Differentiation of $A$ and $J$}

In order to have
sufficient regularity so that the flow map $\eta$ is guaranteed to
be non-degenerate and smooth and in particular the Jacobian determinant $J$ is to be bounded
away from zero and smooth, we need estimates of sufficiently high
order derivatives of $\eta$ or $v$.
The minimal number of derivatives needed will be determined according to
 the strength of degeneracy, namely vanishing exponent $\alpha$. Here we present the differentiation of $A$ and $J$.

Differentiating the inverse of deformation tensor,  since $A\cdot
[D\eta] =I$, one obtains  
\begin{equation}\label{DA}
\partial_t A^k_i=- A^k_rv^r,_sA^s_i\;;\quad \partial_lA^k_i=-
A^k_r\partial_l\eta^r,_sA^s_i
\end{equation}
Differentiating the Jacobian determinant, one obtains 
\begin{equation}\label{DJ}
\partial_tJ = JA^s_r v^r,_s\;; \quad \partial_l J= JA^s_r \partial_l\eta^r,_{s}
\end{equation}
For the cofactor matrix $a=JA$, from \eqref{DA} and \eqref{DJ},  one obtains the following Piola identity. 
\begin{equation}\label{Piola}
a^k_i,_k=0
\end{equation}

\subsection{Notation}

 For a given vector field $F$ on $\Omega$, we use $DF$, $\text{div}F$, $\text{curl}F$ to denote its full gradient, its divergence, and its curl: 
\[
\begin{split}
[DF]^i_j & \equiv F^i,_j \\ 
\text{div}F \;&\equiv  F^r,_r\\
[\text{curl}F]^i & \equiv \epsilon_{ijk} F^k,_j
\end{split}
\] 
Here $\epsilon_{ijk}$ is the Levi-Civita symbol: it is 1 if $(i,j,k)$ is an even permutation of $(1,2,3)$, -1 if $(i,j,k)$ is an odd permutation of $(1,2,3)$, and 0 if any index is repeated. 

We introduce the following Lie derivatives along the flow map $\eta$:
\[
\begin{split}
 [D_\eta F]^i_r&\equiv A^s_rF^i,_s\\ \text{div}_\eta F\;&\equiv A^s_rF^r,_s \\
[\text{curl}_\eta F]^i &\equiv \epsilon_{ijk}A^s_jF^k,_s
\end{split}
\]
which indeed correspond to Eulerian full gradient, Eulerian divergence, and Eulerian curl written in Lagrangian coordinates. 
When $\eta$ is the identity map -- for instance, the initial state of the flow map is the identity map  as in \eqref{flow} -- these 
Lie derivatives are the standard full gradient, divergence, and curl.   In addition, it is convenient to introduce the anti-symmetric  curl matrix $\text{Curl}_\eta F$ 
\[
[\text{Curl}_\eta F]^i_j \equiv  A^s_jF^i,_s - A^s_iF^j,_s 
\]
Note that $\text{Curl}_\eta F$ is a matrix version of a vector $\text{curl}_\eta F$ and that  $|\text{Curl}_\eta F|^2 
=2 |\text{curl}_\eta F|^2$ holds. We will use both $\text{curl}_\eta $ and $\text{Curl}_\eta $.

\subsection{Energy and main result}\label{energy-main}

\def\o{\overline}

To state our result, we need to introduce some  energy  which behaves differently 
for normal and tangential derivatives close to the boundary. We first introduce the tangent vector fields and the normal vector field for the initial domain. 

For given smooth initial domain $\Omega$,   
 we consider a smooth function 
$\phi : \R^3 \to \R$  such that $ \{ x, \phi(x) = 0 \} = \partial \Omega   $ and  
$ \{ x, \phi(x)  >  0 \} =  \Omega   $, and that  $ |\nabla \phi (x)  | = 1    $
 for all $x \in \Omega_\delta = \{ x \in \Omega, \, d(x) < \delta  \} $ for some small $\delta > 0$. In particular 
$ \phi(x) = d(x)  $ in $\Omega_\delta $. Let $\T$ be a finite  set of   (smooth enough) vector fields in 
$\Omega$ which are   tangent to the boundary and $\N$ the set formed by the 
 vector field $\zeta$  which 
is  normal to the boundary and such that $\zeta = \nabla \phi $ for $x \in \Omega_\delta.  $
We also assume that at each point $x$ in $\Omega$ the vectors of $\T $ and $\N$ span 
the whole space $\R^3$ and that the second order differential operator 
  $\sum_{\beta \in \T} \beta^* \beta + \zeta^* \zeta $ is strictly 
elliptic.  In the simplified case where $\Omega  =\mathbb{T}^2\times
(0,1) $, we can just take $  \partial_\zeta = \partial_3  $   and 
 $\T =\{ \partial_1, \partial_2 \}  $.

We define the  energies $ \o{ \mathcal{B} } ^{N},   $   $ \o{  \mathcal{C} } ^{N}, $  
 $   \o{   \mathcal{D}  }^{N} $
and  $\o { \mathcal{E}}^{N} $  by 
 \begin{equation}\label{bar-ener}
\begin{split}
\o{\mathcal{B}}^{N}(v)\;
 &\equiv \frac12\int_\Omega   \sum_{|m|+n=0}^N      d^{1+\alpha+n}
|\text{curl}_\eta \partial_\beta^m\partial_\zeta^n v|^2dx \\
\o{\mathcal{C}}^{N}(\eta)\;
 &\equiv \frac12\int_\Omega  \sum_{|m|+n=0}^N     d^{1+\alpha+n} 
| \text{curl}_\eta \, \partial_\beta^m\partial_\zeta^n \eta|^2dx \\
\o{\mathcal{D}}^{N}(\eta)\;&\equiv
\frac{1}{2\alpha} \int_\Omega   \sum_{|m|+n=0}^N     d^{1+\alpha+n}
|\text{div}_\eta\partial_\beta^m\partial_\zeta^n \eta|^2dx\\
\o{\mathcal{E}}^{N}(\eta,v)&\equiv
\frac12\int_\Omega      \sum_{|m|+n=0}^N     d^{\alpha+n} |\partial_\beta^m\partial_\zeta^n
v|^2  +  d^{1+\alpha+n} 
|D_\eta\partial_\beta^m\partial_\zeta^n \eta|^2dx
\end{split}
\end{equation}
In the above formulae, the sum is made over $m \in \mathbb{N}^{I}$ and 
$\partial_\beta^m$ denotes the derivative  
 $\partial_{\beta_1}^{m_1} \partial_{\beta_2}^{m_2}...\partial_{\beta_{I}}^{m_{I}}    $ where $I = \hbox{Card} \T $. 
The total energy is defined by
\begin{equation}\label{energyf-general}
\begin{split}
 \o{\mathcal{TE}}^{N}\equiv \o{\mathcal{TE} } ^N(\eta,v)\equiv 
 \o{\mathcal{E}}^{N}(\eta,v)+ \o{\mathcal{B}}^N(v)
\end{split}
\end{equation}
Since $\eta(0,x)=x$, the total energy for initial data $v(0,x)=u_0$ is given by 
\begin{equation}\label{ie-general}
  \o{\mathcal{TE}}^{N}(0)=\frac32\int_\Omega d^{1+\alpha}dx+ \sum_{|m|+n=0}^N\frac{1}{2}\int_\Omega d^{\alpha+n}
 |\partial_\beta^m\partial_\zeta^n u_0|^2 +d^{1+\alpha+n}
 |\text{curl}\,\partial_\beta^m\partial_\zeta^n u_0|^2dx 
\end{equation}

We also introduce the  function spaces $\o{X}^{\alpha,b}$, $\o{Y}^{\alpha,b}$
\begin{equation}\label{XY-general}
\begin{split}
\o{X}^{\alpha,b}&\equiv \{d^{\frac{\alpha}{2}}F\in L^2(\Omega) : \int_{\Omega}d^{\alpha+n}|\partial_\beta^m\partial_\zeta^n F|^2 dx<\infty\,,\,0\leq |m|+n\leq b\}\\
\o{Y}^{\alpha,b}&\equiv \{d^{\frac{1+\alpha}{2}}D F\in L^2(\Omega) : \int_{\Omega}d^{1+\alpha+n} 
 |D  \partial_\beta^m\partial_\zeta^n F|^2 dx<\infty\,,\,0\leq |m|+n\leq b\}
\end{split}
\end{equation}
equipped with the following norms: 
\[
\begin{split}
\|F\|^2_{\o{X}^{\alpha, b}}\equiv \sum_{|m|+n=0}^b  \int_{\Omega}d^{\alpha+n}|\partial_\beta^m\partial_\zeta^n F|^2 dx\;;\;\;\;\;
\|F\|^2_{\o{Y}^{\alpha, b}}\equiv \sum_{|m|+n=0}^b\int_{\Omega}d^{1+\alpha+n}  
 |D  \partial_\beta^m\partial_\zeta^n F|^2 dx
\end{split}
\]
Note that 
\[
\o{\mathcal{E}}^N\sim \|v\|_{\o{X}^{\alpha,N}}^2+\|\eta \|_{\o{Y}^{\alpha,N}}^2\,.
\]

We are now ready to state the main result of this article. 

\begin{theorem}\label{thm-general} Let $\alpha>0$ be fixed and $N\geq 2[\alpha]+9$ be given. Suppose  that $Dw\in \o{X}^{\alpha,N}$ and that the initial energy \eqref{ie-general} is bounded: 
$\o{\mathcal{TE}}^N(0)<\infty$.   Then there exist a time $T>0$ 
depending only on $\o{\mathcal{TE}}^N(0)$ and  $\| Dw \|_{\o{X}^{\alpha,N}} $
 a unique solution $(v,\eta)\in C([0,T]; \o{X}^{\alpha,N}\times \o{Y}^{\alpha,N})$ to the Euler equation 
\eqref{eta} and \eqref{deacoustic} on the time interval $[0,T]$ satisfying 
\[
\o{\mathcal{TE}}^N(\eta,v)\leq 2\o{\mathcal{TE}}^N(0)\;\text{ and }\;  \|A-I\|_\infty\leq 1/8\,.
\] 
In particular, ${2}/{3}\leq J\leq 2$. 
\end{theorem}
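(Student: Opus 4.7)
My plan is to prove Theorem \ref{thm-general} by combining a regularization scheme with uniform a priori estimates on the weighted energy $\o{\mathcal{TE}}^N$ and a continuity argument. A natural approximation is to add a small degenerate parabolic term whose weight vanishes to high order at $\partial\Omega$ — for instance $-\kappa\,\partial_k(w^{2+\alpha}\partial_k v_t^i)$ added to \eqref{deacoustic} — so that no spurious boundary condition is needed and the weighted energy structure \eqref{energy} is preserved up to a dissipative term. At each $\kappa>0$, existence of smooth solutions follows from standard fixed-point or Galerkin arguments in a Banach space adapted to the weighted norms. The main task is to derive a bound $\o{\mathcal{TE}}^N(t)\leq 2\o{\mathcal{TE}}^N(0)$ on a time interval $[0,T]$ uniform in $\kappa$, depending only on $\o{\mathcal{TE}}^N(0)$ and $\|Dw\|_{\o{X}^{\alpha,N}}$.

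\textbf{Core energy identity.} For each multi-index $(m,n)$ with $|m|+n\leq N$, I apply $\partial_\beta^m\partial_\zeta^n$ to \eqref{deacoustic}, multiply by $\partial_\beta^m\partial_\zeta^n v^i$, and integrate over $\Omega$. The inertial term produces $\tfrac12\tfrac{d}{dt}\int w^\alpha|\partial_\beta^m\partial_\zeta^n v|^2 dx$, comparable to the kinetic part of $\o{\mathcal{E}}^N$ via $w\sim d$. The pressure term $\partial_\beta^m\partial_\zeta^n(w^{1+\alpha}A_i^kJ^{-1/\alpha})_{,k}$ is integrated by parts using the Piola identity \eqref{Piola}; the extra weight $d^{1+\alpha+n}$ built into the definitions of $\o{\mathcal{D}}^N$ and $\o{\mathcal{E}}^N$ precisely compensates the $n$ normal derivatives that may fall onto $w^{1+\alpha}$, and all boundary contributions vanish since $1+\alpha+n>0$. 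The dominant term is $\tfrac{d}{dt}\o{\mathcal{D}}^N$, while the commutators of $\partial_\beta^m\partial_\zeta^n$ with $w^{1+\alpha}A_i^kJ^{-1/\alpha}$ are controlled by weighted Leibniz rules, Hardy-type inequalities such as $\int d^{\alpha-1}|F|^2 dx \lesssim \int d^{\alpha+1}|DF|^2 dx + \|F\|_{L^\infty}^2$, and the hypothesis $Dw\in\o{X}^{\alpha,N}$.

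\textbf{Curl estimate and full gradient recovery.} Since the identity above controls only the divergence component, to capture the curl I would apply $\text{curl}_\eta\partial_\beta^m\partial_\zeta^n$ to \eqref{deacoustic}: because $A_i^k\partial_k$ represents the Eulerian gradient in Lagrangian coordinates, the pressure force is curl-free modulo commutators, so schematically
\[
\partial_t\,\text{curl}_\eta\,\partial_\beta^m\partial_\zeta^n v = \text{commutators bounded by }P(\o{\mathcal{TE}}^N).
\]
Time integration delivers bounds on $\o{\mathcal{B}}^N(v)$ and $\o{\mathcal{C}}^N(\eta)$. Combined with the divergence bound, a weighted Hodge-type inequality
\[
\int_\Omega d^{1+\alpha+n}|D_\eta F|^2 dx \lesssim \int_\Omega d^{1+\alpha+n}\bigl(|\text{div}_\eta F|^2 + |\text{curl}_\eta F|^2\bigr) dx + \text{l.o.t.},
\]
proved via integration by parts and Hardy, recovers the full $\o{\mathcal{E}}^N$, closing the estimate for $\o{\mathcal{TE}}^N$.

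\textbf{Main obstacle and finish.} The chief difficulty is that the commutators involve pointwise norms of $A-I$, $J^{-1/\alpha}-1$, $Dw$, $D\eta$ and several of their derivatives, whereas the energy only supplies weighted $L^2$ information. The hypothesis $N\geq 2[\alpha]+9$ is exactly what is needed for the weighted Sobolev embedding $\o{Y}^{\alpha,N}\hookrightarrow C^k(\o{\Omega})$, obtained by iterating Hardy in the normal variable and using the standard Sobolev embedding in the tangential variables, to yield uniform pointwise control on enough derivatives to absorb every commutator. Once the bound $\o{\mathcal{TE}}^N\leq 2\o{\mathcal{TE}}^N(0)$ is closed on a short interval by continuity, the pointwise estimate $\|A-I\|_\infty\leq 1/8$ follows from $\eta_t=v$ and this embedding after shrinking $T$, and $2/3\leq J\leq 2$ is then immediate from $J=\det D\eta$. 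The limit $\kappa\to 0$ is taken by weak-$*$ compactness, with continuity in time obtained from the energy identity, and uniqueness is proved at the base level by a weighted energy estimate on the difference of two solutions, whose linearized principal part is symmetric and degenerate-hyperbolic with coefficients controlled by the higher-order bounds already established.
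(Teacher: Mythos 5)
Your strategy diverges from the paper's in two essential places, and both divergences leave genuine gaps.

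\textbf{The energy identity is misread.} You assert that multiplying by $\partial_\beta^m\partial_\zeta^n v^i$ and integrating by parts yields ``the dominant term $\tfrac{d}{dt}\o{\mathcal{D}}^N$'' and that the full gradient in $\o{\mathcal{E}}^N$ must then be recovered from a weighted Hodge-type inequality $\int d^{1+\alpha+n}|D_\eta F|^2 \lesssim \int d^{1+\alpha+n}(|\mathrm{div}_\eta F|^2 + |\mathrm{curl}_\eta F|^2) + \mathrm{l.o.t.}$ In fact, the crucial step the paper exploits — and which you have not seen — is the algebraic decomposition
\[
\partial_l\bigl(A_i^k J^{-1/\alpha}\bigr) = -J^{-1/\alpha}A^k_r[D_\eta\partial_l\eta]^i_r - \tfrac{1}{\alpha}J^{-1/\alpha}A^k_i\,\mathrm{div}_\eta\partial_l\eta - J^{-1/\alpha}A^k_r[\mathrm{Curl}_\eta\partial_l\eta]^r_i.
\]
Paired against $D_\eta\partial_\beta^m\partial_\zeta^n v$, this yields $\tfrac{d}{dt}\o{\mathcal{E}}^N + \tfrac{d}{dt}\o{\mathcal{D}}^N - \tfrac{d}{dt}\o{\mathcal{C}}^N$ \emph{directly}: the full gradient energy comes out of the computation with the correct sign and only the curl appears with the wrong sign, which is then repaired by a separate ODE estimate on the curl equation $\mathrm{curl}_\eta\partial_t v = 0$. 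No Hodge reconstruction is needed; indeed, the degenerate-weight Hodge estimate you invoke is precisely the kind of elliptic estimate for normal derivatives that the paper singles out in the work of Coutand--Lindblad--Shkoller as requiring ``additional work'' to justify at the level of regularity available, and this is exactly what the authors' hyperbolic-type estimate is designed to avoid. Stating such an inequality as a black box does not close the argument.

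\textbf{The construction of solutions is not the paper's and is left unjustified.} You propose adding a degenerate parabolic term $-\kappa\,\partial_k(w^{2+\alpha}\partial_k v_t^i)$ and appeal to ``standard fixed-point or Galerkin arguments.'' Solving a parabolic regularization of a degenerate hyperbolic problem is itself nontrivial — the regularization may not interact well with the algebraic weighted structure that makes the a priori estimates close, and existence for the $\kappa$-system in the required weighted spaces is not standard. The paper instead sets up a \emph{linear} iteration in the unknowns $G = [\sum_\beta \partial_\beta^*\partial_\beta - w^{-\alpha}\partial_\zeta w^{1+\alpha}\partial_\zeta + \lambda]\eta$ and a relaxed curl $H$, solves the linear system by a duality argument built on the same weighted energy functional, and then recovers $\eta$ by Baouendi-type degenerate elliptic regularity (Lemma 4.4). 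That scheme is precisely what avoids the solvability gap you would face with parabolic regularization. Similarly, the sentence ``$N\geq 2[\alpha]+9$ is exactly what is needed'' is asserted without verification, whereas the paper maintains and then closes the explicit a priori pointwise hypothesis (its inequality numbered~\eqref{Assumption}) through the embedding lemma and a continuity argument. Your outline shares the broad ideas of weighted energies, separate curl estimates, and embeddings for pointwise bounds, but the two steps above are where the paper's real work lives, and your version does not supply them.
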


 Theorem \ref{thm-general} indicates that the  minimal number of derivatives needed to capture the physical vacuum \eqref{pvb} depends on the value of the adiabatic exponent $\gamma$. The smaller $\gamma$ is, the more derivatives are required to overcome stronger degeneracy caused by physical vacuum.  Indeed, this phenomenon was captured in our one-dimensional result \cite{JM} where the analysis is carried out in mass Lagrangian coordinates. While there is some similarity between our new analysis and the previous one-dimensional analysis, besides boundary geometry, there is another critical difference regarding the energy functionals: In one-dimensional $V,V^\ast$ framework, the energy space is very nonlinear in that it is not clear at all to deduce some equivalence to the standard weighted Sobolev spaces and moreover, the number of $V,V^\ast$ to define the energy space is rigid. On the other hand, in the current analysis, the energy functionals given in \eqref{energyf} are indeed equivalent to the standard linear weighted Sobolev spaces as discussed in Section \ref{ws} and also the higher regularity can be readily established. 

The method of the proof is based on a hyperbolic type of new energy estimates which consist in the instant energy estimates and the curl estimates. As soon as we linearize the Euler system, we start to see geometric structures: the full gradient, divergence and curl of flow map $\eta$ in the equations.  The new key is to extract \textit{right algebraic weighted structure} for the linearization in the normal direction such that we can directly estimate normal derivatives via the energy estimates: each time we take normal derivative, we obtain more singular (degenerate) weight while the main structure of the equation remains the same for tangential derivatives.  The effect of taking one normal derivative is worth gaining a half derivative. The new energy estimates provide a unified, systematic way of treating all the spatial derivatives. In the instant energy estimates, the curl part comes with undesirable negative sign. This can be absorbed by adding the curl estimates which will be obtained separately from the curl equation. For the construction of solutions, we implement the linear approximate schemes for  
$G=[  \sum_{\beta \in \T}  (\partial_\beta)^* \partial_\beta  - w^{-\alpha}\partial_\zeta w^{1+\alpha}\partial_\zeta+ \lambda] \eta$  with $\lambda$ big enough 
  and we use  a relaxed curl of $G$.  To build the well-posedness of linear approximate systems, we employ the duality argument as done in \cite{JM}. Finally, $\eta$ is found by solving the above degenerate elliptic equation.

In the sequel we will give a detailed proof in the simplified case 
$\Omega = \mathbb{T}^2\times
(0,1)$ and then indicate the changes to be done in the general case. 
The rest of the paper is organized as follows.  In section \ref{new}, we  
rewrite the energies in the simplified case  $\Omega = \mathbb{T}^2\times
(0,1)$ and prove some Hardy type inequalities. 
 In Section \ref{3}, we establish the a priori energy estimates. In Section \ref{exist-proof}, an approximate scheme is provided and Theorem \ref{thm-general} is proven.  In Section \ref{genera-domain}, the general domain case is discussed. In Section \ref{discussion}, we conclude the article  with a few remarks.

\section{A simplified domain} \label{new}

For simplicity of the presentation, we will present the full proof in the case when 
  the initial domain is taken as $$\Omega=\mathbb{T}^2\times
(0,1)$$ 
where $\mathbb{T}^2$ is a two-dimensional period box in $x_1,x_2$. We will then present the 
main changes to be done in the general case. 
  The initial boundary is given as $$\Gamma(0)=\{x_3=0\}\cup\{x_3=1\} \text{ as the reference vacuum boundary.}$$
  The moving vacuum
boundary is given by $\Gamma(t)=\eta(t)(\Gamma(0))$.

We use Latin letters  $i,j,k, ...$ to denote  1, 2, 3
     and that we 
  use Greek letters $\beta,\kappa,\sigma$ to denote 1, 2 only.
Recall the weight $w$ -- initial enthalpy \eqref{w} satisfying \eqref{wcondition}. We now rewrite the  various energy functionals. We use $\partial_\beta^m$ to denote $\partial_1^{m_1}\partial_2^{m_2}$ and $|m|$ to denote $|m|=m_1+m_2$.  
 
We also introduce  $\eta$ dependent energies. These energies are equivalent to the 
ones   given in \eqref{bar-ener} with the difference that $d$ is replaced by $w$ and 
that a factor $J^{-1/\alpha}$ is added. It turns out that these versions are more 
adapted to the energy estimates and have better cancellation properties. 
 
 The curl energies are defined by
\begin{equation}\label{curlE}
\begin{split}
 \mathcal{B}^N(v)\equiv  \sum_{|m|+n=1}^N\frac12\int_\Omega  w^{1+\alpha+n} J^{-1/\alpha}
|\text{curl}_\eta\partial_\beta^m\partial_3^n v|^2dx\equiv 
 \sum_{|m|+n=1}^N\mathcal{B}^{m,n}\\
 \mathcal{C}^N(\eta)\equiv  \sum_{|m|+n=1}^N \frac12\int_\Omega w^{1+\alpha+n} J^{-1/\alpha}
|\text{curl}_\eta\partial_\beta^m\partial_3^n \eta|^2dx \equiv 
 \sum_{|m|+n=1}^N\mathcal{C}^{m,n} 
\end{split}
\end{equation}
The divergence energy is defined by
\begin{equation}\label{divE}
\mathcal{D}^N(\eta)\equiv \sum_{|m|+n=1}^N \frac{1}{2\alpha} \int_\Omega w^{1+\alpha+n} J^{-1/\alpha}
|\text{div}_\eta\partial_\beta^m\partial_3^n \eta|^2dx \equiv \sum_{|m|+n=1}^N\mathcal{D}^{m,n}
\end{equation}
The instant energy is defined by 
\begin{equation}\label{energyif}
\begin{split}
 \mathcal{E}^{N}(\eta,v)& \equiv E+ \sum_{|m|+n=1}^N\frac12\int_\Omega w^{\alpha+n} |\partial_\beta^m\partial_3^n
v|^2dx +\frac12
 \int_\Omega w^{1+\alpha+n} J^{-1/\alpha}
|D_\eta\partial_\beta^m\partial_3^n \eta|^2dx \\ & \equiv  
E+ \sum_{|m|+n=1}^N\mathcal{E}^{m,n}
\end{split}
\end{equation}
The total energy is defined by
\begin{equation}\label{energyf}
\begin{split}
 \mathcal{TE}^{N}\equiv\mathcal{TE}^N(\eta,v)\equiv \mathcal{E}^{N}(\eta,v)+\mathcal{B}^N(v)
\end{split}
\end{equation}
Since $\eta(0,x)=x$, the total energy for initial data $v(0,x)=u_0$ is given by 
\begin{equation}\label{ie}
 \mathcal{TE}^{N}(0)=\mathcal{TE}^N(x,u_0)=E+\sum_{|m|+n=1}^N\frac{1}{2}\int_\Omega w^{\alpha+n}
 |\partial_\beta^m\partial_3^n u_0|^2 +w^{1+\alpha+n}
 |\text{curl}\,\partial_\beta^m\partial_3^n u_0|^2dx 
\end{equation}

Next we introduce the function spaces $X^{\alpha,b}$, $Y^{\alpha,b}$, $Z^{\alpha,b}$, which will be used in the construction of solutions to approximate scheme in Section \ref{IS}, 
associated  to our various energy functionals \eqref{curlE} -- \eqref{energyif}: 
\begin{equation}\label{XY}
\begin{split}
X^{\alpha,b}&\equiv \{w^{\frac{\alpha}{2}}F\in L^2(\Omega) : \int_{\Omega}w^{\alpha+n}|\partial_\beta^m\partial_3^n F|^2 dx<\infty\,,\,0\leq |m|+n\leq b\}\\
Y^{\alpha,b}&\equiv \{w^{\frac{1+\alpha}{2}}D_\eta F\in L^2(\Omega) : \int_{\Omega}w^{1+\alpha+n}J^{-\frac{1}{\alpha}}|D_\eta \partial_\beta^m\partial_3^n F|^2 dx<\infty\,,\,0\leq |m|+n\leq b\}\\
Z^{\alpha,b}&\equiv \{w^{\frac{1+\alpha}{2}} F\in L^2(\Omega) : \int_{\Omega}w^{1+\alpha+n}J^{-\frac{1}{\alpha}}| \partial_\beta^m\partial_3^n F|^2 dx<\infty\,,\,0\leq |m|+n\leq b\}
\end{split}
\end{equation}

\subsection{Imbedding of a weighted Sobolev space}\label{ws}

For any given $\alpha>0$ and given nonnegative integer $b$, we define the weighted Sobolev spaces $H^{\alpha,b}(\Omega)$ by 
\[
H^{\alpha,b}(\Omega)\equiv \{d^{\frac{\alpha}{2}}F\in L^2(\Omega) : \int_{\Omega}d^{\alpha}|D^k F|^2 dx<\infty\,,\,0\leq k\leq b\}
\]
with the norm 
\[
\|F\|^2_{H^{\alpha, b}}\equiv \sum_{k=0}^b  \int_{\Omega}d^{\alpha}|D^k F|^2 dx
\]
We denote the standard Sobolev spaces by $H^s$.  Then for $b\geq \alpha/2$, the weighted spaces $H^{\alpha,b}$ satisfy the following  Hary type 
 embedding \cite{KMP07}: 
\[
H^{\alpha,b}(\Omega)\hookrightarrow H^{b-\frac{\alpha}{2}}
\]
with 
\[
\|F\|_{H^{b-\alpha/2}}\;\precsim \;\|F\|_{H^{\alpha,b}}
\]

 As an application of the above embedding of weighted Sobolev spaces, we first obtain the 
embedding of $X^{\alpha,b}$ into Sobolev spaces for  sufficiently smooth $w$. 

\begin{lemma}\label{emb} For $b\geq \lceil\alpha\rceil$, 
\begin{equation*}
\|F\|_{H^{\frac{b-\alpha}{2}}}\precsim \|F\|_{X^{\alpha,b}}
\end{equation*}
In particular, for $b\geq [\alpha]+4$, 
\begin{equation*}
\|F\|_\infty \precsim  \|F\|_{X^{\alpha,b}}
\end{equation*}
\end{lemma}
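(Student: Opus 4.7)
The strategy is to bootstrap from the cited embedding $H^{\alpha,b}(\Omega)\hookrightarrow H^{b-\alpha/2}(\Omega)$, exploiting the anisotropic structure of $X^{\alpha,b}$: the norm puts weight $w^\alpha$ on purely tangential derivatives (matching the symmetric weighted Sobolev space) but a heavier $w^{\alpha+n}$ on the $n$-th normal derivative. Tangentially this yields regularity $b-\alpha/2$, but in the normal direction the escalating weight is costly, yielding only $(b-\alpha)/2$; the announced exponent is the minimum of the two. The key heuristic, emphasized earlier in the introduction, is that ``one normal derivative is worth half a Sobolev derivative,'' so the minimum is attained at the fully normal end $n=b$.

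My plan is to perform a tangential Fourier decomposition on $\Omega=\mathbb{T}^2\times(0,1)$. Writing $F(x)=\sum_{\xi\in\mathbb{Z}^2}\hat F_\xi(x_3)\,e^{i\xi\cdot(x_1,x_2)}$, Plancherel in the tangential variables yields
\begin{equation*}
\|F\|_{X^{\alpha,b}}^2\;\simeq\;\sum_{\xi\in\mathbb{Z}^2}\sum_{|m|+n\le b}|\xi|^{2|m|}\int_0^1 x_3^{\alpha+n}|\hat F_\xi^{(n)}(x_3)|^2\,dx_3.
\end{equation*}
For each fixed $\xi$, I would apply a one-dimensional weighted Hardy-Sobolev embedding on $(0,1)$, obtained by iterating standard Hardy inequalities (peeled off starting from the top normal derivative, each iteration exchanging one power of weight for a loss of one derivative, i.e.\ gaining half a fractional derivative), to deduce
\begin{equation*}
(1+|\xi|^2)^{(b-\alpha)/2}\|\hat F_\xi\|_{L^2(0,1)}^2+\|\hat F_\xi\|_{H^{(b-\alpha)/2}(0,1)}^2\;\precsim\;\sum_{|m|+n\le b}|\xi|^{2|m|}\int_0^1 x_3^{\alpha+n}|\hat F_\xi^{(n)}(x_3)|^2\,dx_3.
\end{equation*}
Summing over $\xi$ and reassembling gives $\|F\|_{H^{(b-\alpha)/2}(\Omega)}\precsim\|F\|_{X^{\alpha,b}}$, which is the first claim. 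The $L^\infty$ bound then follows from the standard three-dimensional Sobolev embedding $H^s(\Omega)\hookrightarrow L^\infty(\Omega)$ for $s>3/2$: when $b\ge[\alpha]+4$, one has $(b-\alpha)/2\ge 2$ if $\alpha\in\mathbb{N}$, and $(b-\alpha)/2>3/2$ otherwise since $[\alpha]>\alpha-1$.

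The main technical obstacle will be executing the one-dimensional step so that the exchange ``weight $w$ on $|F'|^2$ = half a derivative on $F$'' holds uniformly in the tangential parameter $\xi$, since both sides mix $\xi$-powers with $x_3$-weights. This requires careful ordering of the Hardy inequalities and checking that no boundary trace assumption is needed: the term $|m|=n=0$ in the $X^{\alpha,b}$ norm supplies the base $L^2_{w^\alpha}$ control that anchors the Hardy chain without demanding vanishing of $F$ on $\partial\Omega$. I expect this to parallel the Baouendi-Goulaouic treatment of degenerate elliptic operators, consistent with the change of variables $y_3=2\sqrt{x_3}$ under which $w^{1/2}\partial_3$ becomes a regular derivative.
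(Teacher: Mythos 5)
The paper omits the proof of Lemma~\ref{emb}, but the embedding it has just cited strongly suggests a one-line argument that is considerably more direct than yours: since $w$ is bounded above on the bounded domain $\Omega$, one has $w^{\alpha+b}\precsim w^{\alpha+n}$ for every $n\le b$, so replacing every weight in the $X^{\alpha,b}$-norm by the worst one gives the trivial inclusion $X^{\alpha,b}\subset H^{\alpha+b,\,b}$ with $\|F\|_{H^{\alpha+b,b}}\precsim\|F\|_{X^{\alpha,b}}$. Applying the cited result $H^{\alpha',b'}\hookrightarrow H^{b'-\alpha'/2}$ with $\alpha'=\alpha+b$ and $b'=b$ — whose hypothesis $b'\ge\alpha'/2$ is exactly $b\ge\alpha$, which holds under $b\ge\lceil\alpha\rceil$ — yields $X^{\alpha,b}\hookrightarrow H^{b-(\alpha+b)/2}=H^{(b-\alpha)/2}$ at once. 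The $L^\infty$ statement then follows from $(b-\alpha)/2>3/2$ exactly as you computed.

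Your Fourier-plus-Hardy route is a genuinely different argument and, if executed, would prove more: it separates the tangential regularity $b-\alpha/2$ from the normal regularity $(b-\alpha)/2$ and shows the lemma only keeps the minimum. But that extra anisotropic information is not needed for the lemma, and your plan carries a substantial technical burden that the proposal does not discharge. The one-dimensional inequality you display — uniform in the tangential frequency $\xi$ — is essentially the entire content of your argument, and it is only asserted. Peeling off Hardy inequalities from the top normal derivative interacts nontrivially with the $|\xi|^{2|m|}$ factors; in particular producing the prefactor $(1+|\xi|^2)^{(b-\alpha)/2}$ on the unweighted $L^2$ piece requires an interpolation between the $(m,n)=(b,0)$ and the high-$n$ entries of the sum, and the claim that this can be ``carefully ordered'' so that the half-derivative-per-weight-power exchange goes through uniformly in $\xi$ is plausible from your $x_3^\sigma$ scaling check but not proved. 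Your observation that the $m=n=0$ term anchors the Hardy chain without a boundary-trace hypothesis is correct and worth keeping if you pursue the refined version, but for the stated lemma the weight-replacement shortcut avoids the whole issue. Use $X^{\alpha,b}\subset H^{\alpha+b,b}$ unless you specifically want the sharper, direction-dependent embedding.
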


If $A=[D\eta]^{-1}$ is close to the identity -- for instance, if $\|A-I\|_\infty \leq 1/8$ which implies  $2/3\leq J\leq 2$ -- then $Y^{\alpha,b}$ and $Z^{\alpha,b}$ are isomorphic to standard linear weighted Sobolev spaces like $X^{\alpha,b}$. Therefore, Lemma \ref{emb} dictates the similar embeddings for $Y^{\alpha,b}$ and $Z^{\alpha,b}$: for $b\geq \lceil\alpha\rceil+1$,  
\begin{equation*}
\|DF\|_{{H}^{\frac{b-\alpha-1}{2}}}\precsim \|F\|_{Y^{\alpha,b}}\;\;\text{ and  }\;\;
\|F\|_{{H}^{\frac{b-\alpha-1}{2}}}\precsim \|F\|_{Z^{\alpha,b}}
\end{equation*}

\section{A Priori Energy Estimates}\label{3}

In this section, we will prove the following a priori estimates.

\begin{proposition}\label{prop} Let $\alpha>0$ be given and let $N\geq 2[\alpha]+9$ and $Dw\in X^{\alpha,N}$. Suppose $\eta$ and $v$ solve \eqref{eta} and \eqref{deacoustic} for $t\in[0,T]$ 
 with $\mathcal{TE}^N(\eta,v)<\infty$ and $1/C_0\leq J\leq C_0$ for some $C_0\geq 1$. We further assume that $\eta$
and $v$ enjoy the a priori bound: for any $s=1,2,\text{ and }3$, 
\begin{equation}\label{Assumption}
 \sum_{|p|+q=0}^{[N/2]} |w^{q/2}\partial_\beta^p\partial_3^q\eta^r,_s|
+ \sum_{|p|+q=0}^{[N/2]-1}|w^{q/2}\partial_\beta^p\partial_3^qv^r,_s|<\infty
\end{equation}
 Then we obtain the following a priori estimates:
 \begin{equation}\label{e}
 \begin{split}
 \frac{d}{dt}\mathcal{E}^N(\eta,v)\leq \mathcal{F}(\mathcal{E}^N(\eta,v),\mathcal{B}^N(u_0),C_0)
\end{split}
\end{equation}
where $\mathcal{F}(\mathcal{E}^N(\eta,v),\mathcal{B}^N(u_0),C_0)$ is a continuous
function of $\mathcal{E}^N(\eta,v),\;\mathcal{B}^N(u_0),\text{ and }C_0$. In addition, we have the 
curl energy $\mathcal{B}^N$ bounded 
\begin{equation}\label{b}
 \mathcal{B}^N(v)\leq \mathcal{B}^N(u_0)+\mathcal{G}(\mathcal{E}^N(\eta,v),C_0,T)
\end{equation}
where $\mathcal{G}(\mathcal{E}^N(\eta,v),C_0,T)$ is a continuous function of $\mathcal{E}^N(\eta,v),\;C_0\text{ and }T$.  Moreover, the a priori assumption \eqref{Assumption} can be justified. 
\end{proposition}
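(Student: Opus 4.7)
The plan splits into three pieces: the instant-energy bound \eqref{e}, the curl bound \eqref{b}, and the closing of the pointwise assumption \eqref{Assumption}. The overarching idea, consistent with the remark made after the theorem, is that \eqref{deacoustic} is a degenerate wave equation whose linearisation around $\eta = x$ is of the form $w^\alpha v_t - \mathrm{div}(w^{1+\alpha}\nabla\eta)\sim 0$; each application of the normal derivative $\partial_3$ to the equation formally ``costs'' half a derivative but only if the weight $w^{1+\alpha+n}$ is used at level $n$, which is exactly how $\mathcal{E}^{m,n}, \mathcal{C}^{m,n}, \mathcal{D}^{m,n}$ are built.

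For \eqref{e}, I would apply $\partial_\beta^m \partial_3^n$ with $|m|+n\le N$ to \eqref{deacoustic}, test against $\partial_\beta^m\partial_3^n v^i$, and integrate over $\Omega$. Because $w^{1+\alpha+n}$ vanishes on $\partial\Omega$, integration by parts in $x_k$ produces no boundary terms and transfers the $k$-derivative onto $(\partial_\beta^m\partial_3^n v^i)_{,k}$. Decomposing $A^k_i(\partial_\beta^m\partial_3^n v^i)_{,k}$ into its $\mathrm{div}_\eta$ and $\mathrm{Curl}_\eta$ parts, using the Piola identity \eqref{Piola} to simplify the nonlinear coefficient $(w^{1+\alpha} A^k_i J^{-1/\alpha})_{,k}$, and invoking $\partial_t J^{-1/\alpha} = -\alpha^{-1} J^{-1/\alpha}\,\mathrm{div}_\eta v$ and $\partial_t A^k_i = -A^k_r v^r,_s A^s_i$, yields an identity of the schematic form
\begin{equation*}
\tfrac{d}{dt}\bigl(\mathcal{E}^{m,n} + \mathcal{D}^{m,n} - \mathcal{C}^{m,n}\bigr) = \mathcal{R}^{m,n},
\end{equation*}
where $\mathcal{R}^{m,n}$ collects commutators of $\partial_\beta^m\partial_3^n$ with $A, J^{-1/\alpha}, w^{1+\alpha}$. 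The divergence piece is favourable; the curl piece $\mathcal{C}^{m,n}$ enters with the wrong sign, which is exactly the reason why \eqref{b} is needed separately. Summing over $|m|+n\le N$ and combining with \eqref{b} after time-integration gives \eqref{e}.

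For \eqref{b}, I would divide \eqref{deacoustic} by $w^\alpha$ and take $\mathrm{curl}_\eta$. The pressure block $w^{-\alpha}(w^{1+\alpha}A^k_i J^{-1/\alpha})_{,k}$ is, modulo a lower-order factor $(1+\alpha)w_{,k}A^k_i J^{-1/\alpha}$, a $\nabla_\eta$-gradient, so $\mathrm{curl}_\eta$ annihilates it and leaves
\begin{equation*}
\partial_t(\mathrm{curl}_\eta v)^i = [\partial_t,\mathrm{curl}_\eta]v^i + (\text{lower order in }Dv),
\end{equation*}
whose right-hand side is linear in $Dv$ with coefficients controlled under \eqref{Assumption}. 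Applying $\partial_\beta^m\partial_3^n$, multiplying by $w^{1+\alpha+n}J^{-1/\alpha}\,\mathrm{curl}_\eta\partial_\beta^m\partial_3^n v$ and integrating in time produces \eqref{b}.

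The main obstacle is the bookkeeping of $\mathcal{R}^{m,n}$: when $\partial_\beta^m\partial_3^n$ distributes across the product $w^{1+\alpha} A J^{-1/\alpha}$, the worst pieces put all derivatives on $A$ or $J^{-1/\alpha}$, producing top-order $D\eta$-factors that must be controlled by $\mathcal{E}^N$. The strategy is the usual split: the top-order factor is estimated in the weighted $L^2$-norm $X^{\alpha,N}$ or $Z^{\alpha,N}$, while the remaining factors are put in $L^\infty$ via the embedding of Lemma \ref{emb} together with its $Y^{\alpha,b},Z^{\alpha,b}$ counterparts; this is where the threshold $N\ge 2[\alpha]+9$ is forced. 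Pieces in which the derivatives land on $w^{1+\alpha}$ are benign because $Dw\in X^{\alpha,N}$ by hypothesis. Finally, the a priori assumption \eqref{Assumption} is closed by bootstrapping: Lemma \ref{emb} converts the energy bound $\mathcal{TE}^N\le 2\mathcal{TE}^N(0)$ at the $[N/2]$-derivative level into the required pointwise control of $w^{q/2}\partial_\beta^p\partial_3^q D\eta$ and $w^{q/2}\partial_\beta^p\partial_3^q Dv$, so the estimates are self-consistent on an interval depending only on $\mathcal{TE}^N(0), \|Dw\|_{X^{\alpha,N}}$ and $C_0$, and Gronwall finishes the proof.
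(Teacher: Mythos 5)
Your outline for \eqref{e} and for closing the a~priori bound \eqref{Assumption} is essentially the paper's own argument: differentiate \eqref{deacoustic}, exploit the cancellation of boundary terms due to the $w^{1+\alpha+n}$ weight, extract the $D_\eta$, $\mathrm{div}_\eta$, $\mathrm{Curl}_\eta$ decomposition (the paper does this via \eqref{highD}, acting on $\partial^m\partial^n[A^k_iJ^{-1/\alpha}]$ rather than on the test function, but the bookkeeping is equivalent), absorb the wrong-sign $\mathcal{C}^N$ contribution by the separate curl estimate, and close \eqref{Assumption} by Lemma~\ref{emb}. The ``top-order in weighted $L^2$, lower-order in $L^\infty$'' split is also what is done for the commutator terms $(iv),(v),\ldots$ in the paper.

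However, there is a genuine gap in your treatment of the curl bound \eqref{b}. You assert that after taking $\mathrm{curl}_\eta$ of the velocity equation, applying $\partial^m_\beta\partial^n_3$, and integrating in time, ``the right-hand side is linear in $Dv$ with coefficients controlled under \eqref{Assumption}.'' This is not true for the top-order distributions of derivatives: after $|m|+n=N$ derivatives are spread over the product $\partial_t A^s_j\cdot v^k,_s$, there are terms of the form $\partial_\beta^p\partial_3^q\partial_tA^s_j\cdot\partial_\beta^{m-p}\partial_3^{n-q}v^k,_s$ with either $|p|+q=N$ (so $\partial_\beta^p\partial_3^q\partial_tA$ involves $N+1$ spatial derivatives of $v$) or $|p|+q=0$ (so the second factor is $D^{N+1}v$). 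The energy $\mathcal{E}^N$ controls only $N$ weighted derivatives of $v$; it controls $N+1$ derivatives of $\eta$, but not of $v$. Neither the weight $w^{1+\alpha+n}$ nor the pointwise bound \eqref{Assumption} (which stops at $[N/2]$ derivatives) rescues these terms, and the Levi-Civita contraction does not cancel them. The paper's resolution, in the passage from \eqref{curleta0} to \eqref{curleta}, is the time-integration-by-parts (fundamental theorem of calculus) manipulation: inside the $\int_0^t\cdots d\tau$ one moves a single $\partial_t$ from the high-derivative factor to the low-derivative factor when $|p|+q\geq[N/2]$, and in the opposite direction when $|p|+q<[N/2]$. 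After this redistribution every factor carries at most $N$ spatial derivatives of $v$ or at most $N+1$ of $\eta$, so the curl equation becomes an honest ODE whose right-hand side is controlled by $\mathcal{E}^N$ and $C_0$, which is exactly what \eqref{b} requires. Your proposal needs to incorporate this step; without it, the Gronwall-type closure of the curl estimate does not go through with only $\mathcal{TE}^N$.
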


The proof of Proposition \ref{prop} is based on the following two key
lemmas.

\begin{lemma}\label{lemma1} Assume as in Proposition \ref{prop}. Then we obtain the following 
\begin{equation}\label{e1}
  \frac{d}{dt}\left\{\mathcal{E}^N(\eta,v)+\mathcal{D}^N(\eta)
-\mathcal{C}^N(\eta)\right\}\leq
\mathcal{F}_1(\mathcal{E}^N(\eta,v),C_0)\,.
\end{equation}
\end{lemma}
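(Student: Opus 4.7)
The proof is a weighted energy estimate performed separately for each multi-index $(m,n)$ with $1\le|m|+n\le N$ and then summed. The basic step is: apply the differential operator $D^{m,n}:=\partial_\beta^m\partial_3^n$ to the degenerate acoustic equation \eqref{deacoustic}, test against $D^{m,n}v^i$ (recall $v^i=\eta^i_t$), integrate over $\Omega$, and integrate by parts in $x$. The structural observation --- the ``right algebraic weighted structure'' highlighted in the introduction --- is that each additional $\partial_3$ applied to the principal operator $w^{-\alpha}\partial_k(w^{1+\alpha}\cdot)$ effectively raises the degeneracy exponent $\alpha$ by one, so the weights $w^{\alpha+n}$ in the kinetic energy and $w^{1+\alpha+n}$ in the gradient energy appearing in \eqref{curlE}--\eqref{energyif} are precisely the ones that make the linearized equation look like the original. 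Schematically, $D^{m,n}$ applied to \eqref{deacoustic} gives
$$w^{\alpha+n}\partial_t D^{m,n}v^i+\partial_k\!\bigl(w^{1+\alpha+n}J^{-1/\alpha}A_i^k\,[\text{linearization at }D^{m,n}\eta]\bigr)+R^{m,n}_i=0,$$
where $R^{m,n}_i$ collects the commutators produced when derivatives land on $A$, on $J^{-1/\alpha}$, or on extra powers of $w$.

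After pairing with $D^{m,n}v^i$ and integrating, the first term yields $\frac{d}{dt}\bigl(\tfrac12\int w^{\alpha+n}|D^{m,n}v|^2\,dx\bigr)$; summed in $(m,n)$ and combined with the zeroth-order identity \eqref{energy} it rebuilds $\mathcal{E}^N$. All boundary contributions from the integration by parts vanish because of the factor $w^{1+\alpha+n}$. For the principal spatial divergence one substitutes $v^i=\eta^i_t$, uses the evolution identities \eqref{DA}--\eqref{DJ} together with $\partial_t J^{-1/\alpha}=-\tfrac{1}{\alpha}J^{-1/\alpha}\,\text{div}_\eta v$, and pulls $\partial_t$ outside the integral; this produces a time derivative of a quadratic form in $D_\eta D^{m,n}\eta$. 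Applying the integrated geometric identity (valid up to boundary terms that vanish and lower-order pieces involving $\partial A$)
$$\int w^{1+\alpha+n}J^{-1/\alpha}|D_\eta F|^2\,dx=\int w^{1+\alpha+n}J^{-1/\alpha}\bigl[(\text{div}_\eta F)^2+|\text{curl}_\eta F|^2\bigr]\,dx+\text{lower}$$
with $F=D^{m,n}\eta$, and noting that the factor $\tfrac{1}{\alpha}$ in the definition \eqref{divE} of $\mathcal{D}^N$ is exactly the one coming from $\partial_t J^{-1/\alpha}$, one matches the time derivative with $\tfrac{d}{dt}(\mathcal{E}^{m,n}+\mathcal{D}^{m,n}-\mathcal{C}^{m,n})$. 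The minus sign on $\mathcal{C}^{m,n}$ is intrinsic: the principal operator is elliptic only modulo curl, and this defect is to be recovered in Proposition \ref{prop} by the independent curl estimate \eqref{b}.

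The remaining task is to bound $R^{m,n}_i$ together with all non-principal integration-by-parts pieces by $\mathcal{F}_1(\mathcal{E}^N,C_0)$. These divide into three categories: (i) terms where a derivative falls on $A$ or $J$, which by \eqref{DA}--\eqref{DJ} are polynomials in lower-order derivatives of $\eta$; (ii) terms where derivatives fall on the weight $w^{1+\alpha+n}$ itself, each hit giving up one power of $w$ in exchange for one extra derivative --- the trade-off exactly quantified by the weighted Hardy/embedding inequalities of Section \ref{ws}; (iii) time-derivative commutators from \eqref{DA}--\eqref{DJ} that bring in factors of $v$. The hypothesis $1/C_0\le J\le C_0$ keeps $A$ bounded in $L^\infty$, and the a priori bound \eqref{Assumption} provides $L^\infty$ control on derivatives up to order $[N/2]$, so in every multilinear term one places the top-order factor in the $L^2$-based space $X^{\alpha,N}$ or $Y^{\alpha,N}$ and the rest in $L^\infty$, yielding a bound of the required form.

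The main obstacle will be the combinatorial bookkeeping at the top order $|m|+n=N$: when several $\partial_3$'s fall on the weight $w^{1+\alpha+n}$ instead of on $\eta$, one must check that the resulting redistribution of $w$-weights still allows each piece to be absorbed into $\mathcal{E}^N$, sometimes only after a supplementary integration by parts that converts a lost $w$-factor into a gained tangential derivative via Lemma \ref{emb}. A secondary subtlety, important for closing Proposition \ref{prop}, is to produce the bad-sign curl contribution as \emph{exactly} $\mathcal{C}^{m,n}$ --- with the precise weight $w^{1+\alpha+n}J^{-1/\alpha}$ --- so that it will be cancelled precisely by the independent curl estimate \eqref{b} for $\mathcal{B}^N$ in the full proof.
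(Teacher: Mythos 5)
Your high-level plan is the right one, and tracks the paper: differentiate \eqref{deacoustic} by $\partial_\beta^m\partial_3^n$ to get \eqref{3nm}, test against $\partial_\beta^m\partial_3^nv$, pull a time derivative out of the principal term, and bound commutators using the a priori bound \eqref{Assumption}. But the specific identity you invoke to produce $\mathcal{E}^{m,n}+\mathcal{D}^{m,n}-\mathcal{C}^{m,n}$ is wrong, and this is not a small slip: it is exactly the point where the minus sign on $\mathcal{C}^N$ --- which you rightly flag as the crux --- has to be generated.

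The ``integrated geometric identity'' you write,
$$\int w^{1+\alpha+n}J^{-1/\alpha}|D_\eta F|^2\,dx=\int w^{1+\alpha+n}J^{-1/\alpha}\bigl[(\text{div}_\eta F)^2+|\text{curl}_\eta F|^2\bigr]\,dx+\text{lower},$$
is a Hodge-type integral identity, and even granting it the curl enters with a plus sign, so it cannot produce $-\mathcal{C}^{m,n}$. Moreover, if one accepts it, then $\mathcal{E}^{m,n}\approx\alpha\,\mathcal{D}^{m,n}+\mathcal{C}^{m,n}$ and the combination $\mathcal{E}^{m,n}+\mathcal{D}^{m,n}-\mathcal{C}^{m,n}$ no longer corresponds to any single quadratic form in $D_\eta F$, so the matching you claim cannot go through. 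In addition, with the degenerate weight $w^{1+\alpha+n}$ the extra integration by parts in a Hodge argument produces first-derivative terms multiplied by $\partial w$ that do not sit at the correct weight level, so ``lower'' is not a safe label here.

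The correct mechanism is pointwise algebraic, not a Hodge integration by parts. The top-order part of $\partial_\beta^m\partial_3^n[A_i^kJ^{-1/\alpha}]$ is (cf. \eqref{highD})
$$-J^{-1/\alpha}\bigl(A^k_rA^s_i+\tfrac1\alpha A^k_iA^s_r\bigr)\partial_\beta^m\partial_3^n\eta^r,_s,$$
and the symmetric kernel $B^{ks}_{ir}=A^k_rA^s_i+\tfrac1\alpha A^k_iA^s_r$ satisfies, for $M=D_\eta F$, the \emph{pointwise} identity
$$B^{ks}_{ir}F^i,_kF^r,_s=\text{tr}(M^2)+\tfrac1\alpha(\text{div}_\eta F)^2=|D_\eta F|^2-|\text{curl}_\eta F|^2+\tfrac1\alpha(\text{div}_\eta F)^2,$$
where $\text{tr}(M^2)=|M|^2-2|M_a|^2$ and $M_a=\tfrac12\text{Curl}_\eta F$ is the anti-symmetric part. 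The minus sign on the curl comes from this anti-symmetrization, not from integration by parts. The paper implements exactly this by splitting \eqref{highD} into gradient, divergence and curl pieces before testing (add and subtract $J^{-1/\alpha}A^k_rA^s_rF^i,_s$), and handles the curl piece via the anti-symmetrization identity $\sum_{i,r}E^i_r(F^r_i-F^i_r)=-\sum_{i>r}(E^r_i-E^i_r)(F^r_i-F^i_r)$ before pulling $\partial_t$ out. Your ``pull $\partial_t$ out first'' route can be made to work, but only if it is followed by this pointwise decomposition, not a Hodge identity.

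Your treatment of commutators --- top-order factor in $L^2$ with the weight, remaining factors in $L^\infty$ via \eqref{Assumption}, extra tangential integration by parts when $|m|\ge1$ --- tracks the paper's handling of the fourth, fifth and sixth lines of \eqref{EEs} and is in the right spirit. But without replacing the Hodge identity by the correct algebraic one, the proof does not close.
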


\begin{lemma}\label{lemma2} Assume as in Proposition \ref{prop}. Then we obtain the following  
\begin{equation}\label{e2}
  \frac{d}{dt}\mathcal{C}^N(\eta)\leq \mathcal{F}_2(\mathcal{E}^N(\eta,v),\mathcal{B}^N(u_0),C_0)\,.
\end{equation}
Moreover, \eqref{b} is satisfied. 
\end{lemma}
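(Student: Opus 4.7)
The central structural observation is that the momentum equation \eqref{deacoustic}, after use of the Piola identity \eqref{Piola} and the relations $(A_i^k)_{,k}=-J^{-1}J_{,k}A_i^k$ and $(J^{-1/\alpha})_{,k}=-\alpha^{-1}J^{-1/\alpha-1}J_{,k}$, rewrites as
\begin{equation*}
v_t^i \;=\; -(1+\alpha)\, A_i^k\, (w\, J^{-1/\alpha})_{,k},
\end{equation*}
i.e.~$v_t$ is the Eulerian gradient of the enthalpy $(1+\alpha)w J^{-1/\alpha}$. Hence $\text{Curl}_\eta v_t \equiv 0$. In parallel, the very definition $A^s_j\eta^i,_s=\delta^i_j$ gives $\text{Curl}_\eta \eta \equiv 0$ identically. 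Both curl energies are therefore driven only by commutators arising when one moves spatial derivatives past $\text{Curl}_\eta$ or past $\partial_t$, plus the initial vorticity $\mathcal{B}^N(u_0)$.

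\textbf{Step 1 (estimate of $\mathcal{B}^N(v)$).} Differentiate $\mathcal{B}^{m,n}(v)$ in time. Since $\text{Curl}_\eta v_t = 0$,
\begin{equation*}
\partial_t\, \text{Curl}_\eta\, \partial_\beta^m\partial_3^n v \;=\; \bigl[\partial_t,\; \partial_\beta^m\partial_3^n\,\text{Curl}_\eta\bigr]\, v,
\end{equation*}
and expanding this commutator via \eqref{DA} yields terms of schematic form $(\partial^a A)(\partial^b v)(\partial^c v)$ with $a+b+c\le N+1$. In each term the factor of highest order is kept in the weighted $L^2$-norm controlled by $\sqrt{\mathcal{E}^N}$, while the remaining low-order factors are placed in $L^\infty$ via Lemma \ref{emb}, the hypothesis $N\ge 2[\alpha]+9$, and the a priori bound \eqref{Assumption}. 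The time derivative of the weight, $\partial_t(w^{1+\alpha+n}J^{-1/\alpha})=-\alpha^{-1}w^{1+\alpha+n}J^{-1/\alpha-1}J_t$, is absorbed into $\|Dv\|_\infty\mathcal{B}^{m,n}(v)$ since $J_t = J A^s_r v^r,_s$. Summing over $|m|+n\le N$ and integrating in $t\in[0,T]$ yields
\begin{equation*}
\mathcal{B}^N(v)(t) \;\le\; \mathcal{B}^N(u_0)\;+\;\int_0^t \mathcal{H}\bigl(\mathcal{E}^N(\tau),C_0\bigr)\,d\tau,
\end{equation*}
which is precisely \eqref{b}.

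\textbf{Step 2 (estimate of $\mathcal{C}^N(\eta)$).} Since $\eta_t=v$,
\begin{equation*}
\partial_t\,\text{Curl}_\eta\,\partial_\beta^m\partial_3^n\eta \;=\; \text{Curl}_\eta\,\partial_\beta^m\partial_3^n v \;+\; \bigl[\partial_t,\,\partial_\beta^m\partial_3^n\,\text{Curl}_\eta\bigr]\,\eta,
\end{equation*}
so the principal contribution to $\frac{d}{dt}\mathcal{C}^{m,n}(\eta)$ is the pairing
\begin{equation*}
\int_\Omega w^{1+\alpha+n} J^{-1/\alpha}\;\text{Curl}_\eta\,\partial_\beta^m\partial_3^n\eta : \text{Curl}_\eta\,\partial_\beta^m\partial_3^n v\;dx,
\end{equation*}
which by Cauchy-Schwarz is bounded by $\sqrt{\mathcal{C}^{m,n}(\eta)}\sqrt{\mathcal{B}^{m,n}(v)}\le \mathcal{C}^{m,n}(\eta)+\mathcal{B}^N(v)$. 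Inserting the bound from Step 1 and treating the commutator and the weight-derivative term exactly as in Step 1 produces \eqref{e2} after summation over $|m|+n\le N$.

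\textbf{Main obstacle.} The delicate point is weight matching: each extra normal derivative $\partial_3$ falling on $A$ or on $w$ appears to cost one power of $w^{1/2}$, and this deficit must be recovered by combining the surplus factor $w^{1+\alpha+n}$ with the Hardy-type embeddings of Section \ref{ws}, which in turn require $Dw\in X^{\alpha,N}$ and the large regularity threshold $N\ge 2[\alpha]+9$. Equally crucial is the order of the two steps: $\mathcal{B}^N(v)$ must be closed \emph{first} in terms of $\mathcal{E}^N$ and $\mathcal{B}^N(u_0)$ alone, so that the right-hand side of \eqref{e2} never contains $\mathcal{B}^N(v)$ itself and the Gronwall loop with Lemma \ref{lemma1} closes in the instant energy.
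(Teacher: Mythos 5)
Your algebraic starting point is the same as the paper's (the Lagrangian identity $\text{curl}_\eta v_t=0$ and its differential consequence for $\text{curl}_\eta\eta_t$), but Step~1 contains a genuine gap. When you write $\frac{d}{dt}\mathcal{B}^{m,n}(v)$ and expand the commutator, the time derivative of the operator $\text{Curl}_\eta$ itself produces the term
\begin{equation*}
\partial_tA^s_j\cdot \partial_\beta^m\partial_3^n v^i{,}_s - \partial_tA^s_i\cdot \partial_\beta^m\partial_3^n v^j{,}_s ,
\end{equation*}
which, for $|m|+n=N$, contains the \emph{full gradient} $\partial_\beta^m\partial_3^n Dv$, i.e.\ $N+1$ spatial derivatives of $v$. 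Nothing in the available functionals controls this quantity: $\mathcal{E}^N$ bounds $w^{(\alpha+n)/2}\partial_\beta^m\partial_3^n v$ (only $N$ derivatives of $v$) and $w^{(1+\alpha+n)/2}D_\eta\partial_\beta^m\partial_3^n\eta$ ($N+1$ derivatives of $\eta$, not $v$); $\mathcal{B}^N$ bounds only the curl of $\partial_\beta^m\partial_3^n v$ and $\mathcal{D}^N$ only the divergence, but neither controls the full gradient. So the assertion that ``the factor of highest order is kept in the weighted $L^2$-norm controlled by $\sqrt{\mathcal{E}^N}$'' fails for exactly this top-order term, and your Step~1 does not close.

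The paper's device for precisely this problem is what you are missing: it first integrates the curl equation \eqref{curl2} in time to obtain $\text{curl}_\eta\eta_t = \text{curl}\,u_0 + \epsilon_{ijk}\int_0^t\partial_tA^s_j v^k{,}_s\,d\tau$, takes $\partial_\beta^m\partial_3^n$, and then applies the fundamental theorem of calculus \emph{inside} the time integral. Depending on whether $|p|+q$ is larger or smaller than $[N/2]$, one time derivative is moved from the factor $\partial_\beta^p\partial_3^q\partial_tA^s_j$ to the factor $\partial_\beta^{m-p}\partial_3^{n-q}v^k{,}_s$, or vice versa. The effect of this redistribution is to replace the offending $\partial_\beta^m\partial_3^n v^k{,}_s$ (that is, $\partial^{N+1}v$) by $\partial_\beta^m\partial_3^n\eta^k{,}_s$ (that is, $\partial^{N+1}\eta$), which \emph{is} controlled by the $\int w^{1+\alpha+n}J^{-1/\alpha}|D_\eta\partial_\beta^m\partial_3^n\eta|^2$ part of $\mathcal{E}^N$; it also generates the boundary term $2\epsilon_{ijk}\partial_tA^s_j\partial_\beta^m\partial_3^n\eta^k{,}_s$ visible in \eqref{curleta}. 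Once \eqref{curleta} is in hand, the paper estimates $\mathcal{C}^N$ by pairing with $w^{1+\alpha+n}J^{-1/\alpha}\text{curl}_\eta\partial_\beta^m\partial_3^n\eta$ and Cauchy--Schwarz, and then reads off the bound on $\mathcal{B}^N(v)$ algebraically from the identity $\partial_t[\text{curl}_\eta\partial_\beta^m\partial_3^n\eta] = \text{curl}_\eta\partial_\beta^m\partial_3^n v + \epsilon_{ijk}\partial_tA^s_j\partial_\beta^m\partial_3^n\eta^k{,}_s$ --- that is, the order of Steps~1 and~2 is in fact the opposite of yours, and no Gronwall is needed for $\mathcal{B}^N$. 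Your Step~2 is essentially sound given a bound for $\mathcal{B}^N(v)$, but without the time-integration-by-parts device the bound on $\mathcal{B}^N(v)$ cannot be obtained the way you describe.
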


By adding \eqref{e1} and \eqref{e2}, the main estimate \eqref{e}
follows. Moreover, the a priori assumption \eqref{Assumption} can be verified within $\mathcal{E}^N$ by Lemma \ref{emb} and the standard continuity argument.

As a preparation of higher order energy estimates, we first compute
the spatial derivative of $A_i^k{J}^{-1/\alpha}$.
\[
 \begin{split}
 \partial_l(A_i^k{J}^{-1/\alpha})&= {J}^{-1/\alpha}\partial_lA_i^k -\tfrac{1}{\alpha}
{J}^{-(1+\alpha)/\alpha} A_i^k \partial_lJ\\
&=-{J}^{-1/\alpha}A^k_rA^s_i\partial_l\eta^r,_s-\tfrac{1}{\alpha}{J}^{-1/\alpha}A^k_iA^s_r\partial_l
\eta^r,_s\\
&= -{J}^{-1/\alpha}A^k_rA^s_r\partial_l\eta^i,_s
-\tfrac{1}{\alpha}{J}^{-1/\alpha}A^k_iA^s_r\partial_l\eta^r,_s\\
&\quad\!\,-{J}^{-1/\alpha}A^k_r[A^s_i\partial_l\eta^r,_s-A^s_r\partial_l\eta^i,_s]
 \end{split}
\]
Thus 
\begin{equation}
\label{structure1}
 \partial_l(A_i^k{J}^{-1/\alpha})=-{J}^{-1/\alpha}A^k_r\, [D_\eta\partial_l\eta]^i_r
-\tfrac{1}{\alpha}{J}^{-1/\alpha}A^k_i\,\text{div}_\eta\partial_l\eta-{J}^{-1/\alpha}A^k_r\,
[\text{Curl}_\eta\partial_l\eta]^r_i
\end{equation}
Similarly, the time derivative of $A_i^k{J}^{-1/\alpha}$ is
given by
\begin{equation}
\begin{split}\label{structure2}
\partial_t(A_i^k{J}^{-1/\alpha})=-{J}^{-1/\alpha}A^k_r\, [D_\eta v]^i_r
-\tfrac{1}{\alpha}{J}^{-1/\alpha}A^k_i\,\text{div}_\eta v-{J}^{-1/\alpha}A^k_r\,
[\text{Curl}_\eta v]^r_i
\end{split}
\end{equation}
We remark that {after taking derivatives}, namely \textit{after linearization},  we start to see
 structures: in \eqref{structure1} and \eqref{structure2}, the
first term corresponds to the full gradient, the second to the
divergence, the last term to the curl. 
For $n\geq1$, we write
$\partial_l^n[A_i^k{J}^{-1/\alpha}]$ as follows.
\begin{equation}
\begin{split}\label{highD}
\partial_l^n[A_i^k{J}^{-1/\alpha}]
&= -{J}^{-1/\alpha}A^k_r\, [D_\eta\partial_l^n\eta]^i_r
-\tfrac{1}{\alpha}{J}^{-1/\alpha}A^k_i\,\text{div}_\eta\partial_l^n\eta-{J}^{-1/\alpha}A^k_r
\,[\text{Curl}_\eta\partial_l^n\eta]^r_i \\
&\quad-\sum_{p=1}^{n-1}\{\partial_l^p[{J}^{-1/\alpha}A^k_rA^s_i]\,\partial_l^{n-p}\eta^r,_s
+\tfrac{1}{\alpha}\partial_l^p[{J}^{-1/\alpha}A^k_iA^s_r]\,\partial_l^{n-p}\eta^r,_s\}
\end{split}
\end{equation}
It turns out that the curl comes with bad sign in the main energy estimates, and we will obtain the estimates of 
the curl separately from the curl equation \eqref{curl1} or \eqref{curl2} as if it were an ordinary differential equation.

\subsection{The proof of Lemma \ref{lemma1} : Energy estimates for $\mathcal{E}^N$}

We first claim that when taking $\partial_3^n$ of the equation \eqref{deacoustic},
we obtain the following higher order equations of different structures depending on $n$:
\begin{equation}\label{3n}
\begin{split}
 &w^{\alpha+n}\partial_3^n\eta_{tt}^i+
(w^{1+\alpha+n}\,\partial_3^n[A_i^k{J}^{-1/\alpha}]),_k + w^{\alpha+n} I^{0,n}=0
\end{split}
\end{equation}
where $I^{0,n}$ are essentially lower-order terms given inductively as follows: 
\begin{equation}\label{I0n}
\begin{split}
I^{0,0}=0\,;\;I^{0,n}=\partial_3I^{0,n-1}&
 +\partial_3w\cdot \partial_3^{n-1}[A_i^\kappa J^{-1/\alpha}],_\kappa - w,_\kappa\cdot \,\partial_3^{n}[A_i^\kappa J^{-1/\alpha}] \\
 &+(\alpha+n)\partial_3 w,_k\cdot\, \partial_3^{n-1}[A_i^kJ^{-1/\alpha}] 
 \text{ for }n\geq 1
\end{split}
\end{equation}
In order to see that, it is convenient to write the equation \eqref{3n} in the following form:
\[
\begin{split}
 \partial_3^n\eta_{tt}^i&+
w\partial_3^{n+1}[A_i^3{J}^{-1/\alpha}]+
(1+\alpha+ n)\partial_3w\,\partial_3^{n}[A_i^3{J}^{-1/\alpha}]\\
&+w\partial_3^{n}[A_i^\kappa{J}^{-1/\alpha}],_\kappa + (1+\alpha+n)w,_\kappa \partial_3^{n}[A_i^\kappa{J}^{-1/\alpha}] +I^{0,n}=0
\end{split}
\]
This form can be readily verified by the induction on $n$ starting from \eqref{deacoustic}.
The equations for general mixed derivatives $\partial_\beta^m\partial_3^n\eta$ read as follows.
\begin{equation}\label{3nm}
 \begin{split}
 &w^{\alpha+n}\partial_\beta^m\partial_3^n\eta_{tt}^i+
(w^{1+\alpha+n}\,\partial_\beta^m\partial_3^n[A_i^k{J}^{-1/\alpha}]),_k +w^{\alpha+n} I^{m,n}=0
\end{split}
\end{equation}
where $I^{m,n}$ is given inductively as follows:  for $|m|\geq 1$
\begin{equation}\label{Imn}
I^{m,n}=\partial_\beta I^{m-1,n}+ \partial_\beta w\cdot \partial_\beta^{m-1}\partial_3^n [A^k_iJ^{-1/\alpha}],_k +(1+\alpha+n)\partial_\beta w,_k \cdot \partial_\beta^{m-1}\partial_3^n [A^k_iJ^{-1/\alpha}]
\end{equation}
The degeneracy of the equation lies along the normal direction $x_3$.
It is interesting to point out that the more normal derivatives we take, the more degeneracy we have, while the structure of the equation
stays the same for the tangential derivatives $\partial_\beta$.

We now perform the energy estimates of \eqref{3nm}
for $\partial_\beta^m\partial_3^n v$ and $D_\eta\partial_\beta^m\partial_3^n\eta$. By using \eqref{highD}, we rewrite \eqref{3nm} as follows: for $1\leq |m|+n\leq N$,
\[
\begin{split}
&  w^{\alpha+n}\partial_\beta^m\partial_3^n\eta_{tt}^i
-(w^{1+\alpha+n}\,{J}^{-1/\alpha}A^k_r\,[D_\eta\partial_\beta^m\partial_3^n\eta]^i_r),_k
-\tfrac{1}{\alpha}(w^{1+\alpha+n}\,{J}^{-1/\alpha}A^k_i\,\text{div}_\eta\partial_\beta^m\partial_3^n\eta),_k\\
&- (w^{1+\alpha+n}\,{J}^{-1/\alpha}A^k_r\,
[\text{Curl}_\eta\partial_\beta^m\partial_3^n\eta]^r_i),_k \\
&-\sum_{ |p|+q=1}^{|m|+n-1}(w^{1+\alpha+n}\,\{\partial_\beta^p
\partial_3^q[{J}^{-1/\alpha}A^k_rA^s_i]\,\partial_\beta^{m-p}
\partial_3^{n-q}\eta^r,_s
+\tfrac{1}{\alpha}\partial_\beta^p
\partial_3^q[{J}^{-1/\alpha}A^k_iA^s_r]\,\partial_\beta^{m-p}
\partial_3^{n-q}\eta^r,_s\}),_k \\
&+w^{\alpha+n} I^{m,n}=0
\end{split}
\]
Multiplying  by $\partial_\beta^m\partial_3^n\eta_t^i$ and integrating  over $\Omega$, we   get
\begin{equation}\label{EEs}
\begin{split}
 {\frac12\frac{d}{dt}\int {w}^{\alpha+n}|\partial_\beta^m\partial_3^n v|^2dx+
\int {w}^{1+\alpha+n} \partial_\beta^m\partial_3^n\eta_t^i,_k
\cdot
{J}^{-1/\alpha}A^k_r\,[D_\eta\partial_\beta^m\partial_3^n\eta]^i_rdx}&\\
+{\tfrac{1}{\alpha} \int {w}^{1+\alpha+n}
\partial_\beta^m\partial_3^n\eta_t^i,_k\cdot
{J}^{-1/\alpha}A^k_i\,\text{div}_\eta\partial_\beta^m\partial_3^n\eta dx}&\\
+{\int {w}^{1+\alpha+n}
\partial_\beta^m\partial_3^n\eta_t^i,_k\cdot {J}^{-1/\alpha}A^k_r\,
[\text{Curl}_\eta\partial_\beta^m\partial_3^n\eta]^r_i dx}&\\
-\sum_{{|p|+q=1}}^{|m|+n-1}\int
\partial_\beta^m\partial_3^n\eta_t^i\cdot(w^{1+\alpha+n}\,\{\partial_\beta^p
\partial_3^q[{J}^{-1/\alpha}A^k_rA^s_i]\,\partial_\beta^{m-p}
\partial_3^{n-q}\eta^r,_s\quad\; &\\
{\quad\quad\quad\quad\quad\quad\quad+\tfrac{1}{\alpha}\partial_\beta^p
\partial_3^q[{J}^{-1/\alpha}A^k_iA^s_r]\,\partial_\beta^{m-p}
\partial_3^{n-q}\eta^r,_s\}),_k dx} &\\
 + \int 
 w^{\alpha+n}\partial_\beta^m\partial_3^n\eta_t^i\cdot
I^{m.n} dx & 
=0
\end{split}
\end{equation}
We will estimate line by line. The first line in \eqref{EEs} 
gives rise to the $L^2$ integral of
 the full gradient $D_\eta\partial_\beta^m\partial_3^n\eta$ with the weight
$w^{1+\alpha+n}J^{-1/\alpha}$ and commutators involving
$\partial_tA^k_r$ and $\partial_tJ$.
\[
\begin{split}
 &\frac12\frac{d}{dt}\{\int w^{\alpha+n}|\partial_\beta^m\partial_3^n v|^2dx
 +\int w^{1+\alpha+n} J^{-1/\alpha}
|D_\eta\partial_\beta^m\partial_3^n\eta|^2 dx \}\\
&-\int w^{1+\alpha+n}
\partial_\beta^m\partial_3^n\eta^i,_k {J}^{-1/\alpha}{A^k_r}_t\cdot
[D_\eta\partial_\beta^m\partial_3^n\eta]^i_rdx
  + \frac{1}{2\alpha}\int w^{1+\alpha+n} J^{-\frac{1+\alpha}{\alpha}}J_t\,
  |D_\eta\partial_\beta^m\partial_3^n\eta|^2
 dx\\
 &=\frac{d}{dt}\mathcal{E}^{m,n}+(i)
\end{split}
\]
For the second line in \eqref{EEs}, $\text{div}_\eta\partial_\beta^m\partial_3^n\eta$ and commutators come out.
\[
\begin{split}
 &\frac{1}{2\alpha}\frac{d}{dt}\int w^{1+\alpha+n} J^{-1/\alpha}
 |\text{div}_\eta\partial_\beta^m\partial_3^n\eta|^2 dx\\
& -\tfrac{1}{\alpha}
\int w^{1+\alpha+n} \partial_\beta^m\partial_3^n\eta^i,_k
{J}^{-1/\alpha}{A^k_i}_t\cdot \text{div}_\eta \partial_\beta^m\partial_3^n\eta  dx
  + \tfrac{1}{2\alpha^2}\int w^{1+\alpha+n} J^{-\frac{1+\alpha}{\alpha}}J_t
\, |\text{div}_\eta\partial_\beta^m\partial_3^n\eta|^2
 dx\\
 &=\frac{d}{dt}\mathcal{D}^{m,n} +(ii)
\end{split}
\]
The third line can be written as $\text{Curl}_\eta\partial_\beta^m\partial_3^n\eta$ plus commutators.
\[
 \begin{split}
 &-\sum_{i>r} \int w^{1+\alpha+n} {J}^{-1/\alpha} [A^k_i\partial_\beta^m\partial_3^n\eta_t^r,_k -
 A^k_r\partial_\beta^m\partial_3^n\eta_t^i,_k
]\cdot[\text{Curl}_\eta \partial_\beta^m\partial_3^n\eta]^r_i\, dx\\
&=-\frac{1}{4}\frac{d}{dt} \int w^{1+\alpha+n} {J}^{-1/\alpha}
|\text{Curl}_\eta\partial_\beta^m\partial_3^n\eta|^2 dx -\frac{1}{4\alpha} \int w^{1+\alpha+n}
{J}^{-\frac{1+\alpha}{\alpha}}J_t \,
|\text{Curl}_\eta\partial_\beta^m\partial_3^n\eta|^2
dx\\&\quad+
\sum_{i>r} \int w^{1+\alpha+n} {J}^{-1/\alpha} [{A^k_i}_t\,\partial_\beta^m\partial_3^n\eta^r,_k
- {A^k_r}_t\,\partial_\beta^m\partial_3^n\eta^i,_k
]\cdot[\text{Curl}_\eta \partial_\beta^m\partial_3^n\eta]^r_i\, dx\\
&=-\frac{d}{dt}\mathcal{C}^{m,n}+(iii)
 \end{split}
\]
Note that we have used the following anti-symmetrization to obtain
the curl structure. For any $(1,1)$ tensors $E$ and $F$
\[
 \begin{split}
&  \sum_{i,r=1}^3 E^i_r\,(F^r_i-F^i_r) =  \left\{\sum_{i>r} +  \sum_{i<r}\right\} E^i_r\,(F^r_i-F^i_r)\\
&=\sum_{i>r}(E^i_r-E^r_i)\,(F^r_i-F^i_r)+\underbrace{\sum_{i>r}E^r_i\,(F^r_i-F^i_r) +\sum_{i<r}E^i_r\,(F^r_i-F^i_r)}_{=0}\\
&=-\sum_{i>r}(E^r_i-E^i_r)\,(F^r_i-F^i_r)
 \end{split}
\]
Since $|{A_r^k}_t|$ and $|J_t|$ are bounded by the energy $\mathcal{E}^{N}$ given in \eqref{energyf},
the commutators $(i),\,(ii),\,(iii)$ are
 bounded by a continuous function of $\mathcal{E}^{N}$:
\[
  |(i)|+  |(ii)|+  |(iii)|\leq \mathcal{F}_3(\mathcal{E}^{N})
\]

Next we turn into the fourth and fifth lines in \eqref{EEs} which consist of lower order nonlinear terms.
We present the detail only for the
first term and the other can be estimated in the same way.
We consider two cases: $|m|=0$ and $|m|\geq 1$.
For $|m|=0$, the index $p$ does not appear and the first term reads as follows: for
$1\leq q\leq n-1$
\[
\begin{split}
&\int \partial_3^n\eta_t^i\cdot(w^{1+\alpha+n}\,
\partial_3^q[{J}^{-1/\alpha}A^k_rA^s_i]\,\partial_3^{n-q}\eta^r,_s),_kdx\\
&=\int \partial_3^n\eta_t^i\cdot(w^{1+\alpha+n}
\partial_3^q[{J}^{-1/\alpha}A^\kappa_rA^s_i]\,\partial_3^{n-q}\eta^r,_s),_\kappa dx\\
&\quad+
\int \partial_3^n\eta_t^i\cdot(w^{1+\alpha+n}
\partial_3^q[{J}^{-1/\alpha}A^3_rA^s_i]\,\partial_3^{n-q}\eta^r,_s),_3dx\\
&\equiv(iv)+(v)
\end{split}
\]
For $(iv)$ where $\kappa=1,$ $2$
\[
\begin{split}
 (iv)&=\int \partial_3^n\eta_t^i\cdot w^{1+\alpha+n}
\partial_\kappa\partial_3^q[{J}^{-1/\alpha}A^\kappa_rA^s_i]\,\partial_3^{n-q}\eta^r,_s dx\\
&\quad+
\int \partial_3^n\eta_t^i\cdot w^{1+\alpha+n}
\partial_3^q[{J}^{-1/\alpha}A^\kappa_rA^s_i]\,\partial_\kappa\partial_3^{n-q}\eta^r,_s dx
\end{split}
\]
Since $|w^{q/2}\partial_3^{q}\eta^r,_s|$ and
 $|w^{(q-1)/2}\partial_\beta\partial_3^{q-1}\eta^r,_s|$ for each
$ q\leq [n/2]$ are bounded due to the assumption \eqref{Assumption}, $(iv)$ is bounded by
\[
\mathcal{E}^{0,n}+ \sum_{q=1}^{n-1}(\mathcal{E}^{0,q}+\mathcal{E}^{1,q})\leq \mathcal{F}_4(\mathcal{E}^N)\,.
\]
For $(v)$:
\[
\begin{split}
 (v)=\,&(1+\alpha+n)\int w' \partial_3^n\eta_t^i\cdot w^{\alpha+n}
\partial_3^q[{J}^{-1/\alpha}A^3_rA^s_i]\,\partial_3^{n-q}\eta^r,_sdx\\
&+\int \partial_3^n\eta_t^i\cdot w^{1+\alpha+n}
\partial_3^{q+1}[{J}^{-1/\alpha}A^3_rA^s_i]\,\partial_3^{n-q}\eta^r,_sdx\\
&+
\int \partial_3^n\eta_t^i\cdot w^{1+\alpha+n}
\partial_3^q[{J}^{-1/\alpha}A^3_rA^s_i]\,\partial_3^{n+1-q}\eta^r,_sdx
\end{split}
\]
Since $|w^{q/2}\partial_3^{q}\eta^r,_s|$ and
 $|w^{(q-1)/2}\partial_\beta\partial_3^{q-1}\eta^r,_s|$ for each
$ q\leq [n/2]$ are bounded due to \eqref{Assumption},
$(v)$ is bounded by
\[
\sum_{q=1}^{n}\mathcal{E}^{0,q}\leq \mathcal{F}_5(\mathcal{E}^N)\,.
\]
Now consider $|m|\geq 1$. Recall that $|m|+n\leq N$,
$1\leq |p|+q\leq |m|+n-1$, $0\leq |p|\leq |m|$ and $0\leq q\leq n$. We first integrate by parts in each $x_k$
direction of the second factor and then integrate by parts in $x_\beta$ ($\beta=1\text{ or }2$) direction of
the first factor.
\[
 \begin{split}
 & \int \partial_\beta^m\partial_3^n\eta_t^i\cdot(w^{1+\alpha+n}\,\partial_\beta^p
\partial_3^q[{J}^{-1/\alpha}A^k_rA^s_i]\,\partial_\beta^{m-p}
\partial_3^{n-q}\eta^r,_s),_kdx \\
&=\int \partial_\beta^{m-1}\partial_3^n\eta_t^i,_k\cdot\,w^{1+\alpha+n}\,\partial_\beta^{p+1}
\partial_3^q[{J}^{-1/\alpha}A^k_rA^s_i]\,\partial_\beta^{m-p}
\partial_3^{n-q}\eta^r,_sdx\\
&+\int \partial_\beta^{m-1}\partial_3^n\eta_t^i,_k\cdot\,w^{1+\alpha+n}\,\partial_\beta^{p}
\partial_3^q[{J}^{-1/\alpha}A^k_rA^s_i]\,\partial_\beta^{m+1-p}
\partial_3^{n-q}\eta^r,_sdx
 \end{split}
\]
Note that for $k=1\text{ or } 2$, the first factor together with the weight $w^{(\alpha+n)/{2}}$ is
bounded by $\mathcal{E}^{m,n}$ and for $k=3$, the first factor with the weight $w^{(\alpha+n+1)/{2}}$
is bounded by  $\mathcal{E}^{m-1,n+1}$. Thus from \eqref{Assumption} again, the above is bounded by
\[
\mathcal{E}^{m,n}+ \underbrace{\mathcal{E}^{m-1,n+1}}_{(\ast)}+ \sum_{\substack{p+q=1;\\0\leq p\leq m,0\leq q\leq n}}^{m+n-1}\mathcal{E}^{p+1,q}
\;\leq \;\mathcal{F}_6(\mathcal{E}^N)\,.
\]
Note that since $|m|\geq 1$, $n+1\leq N$ and $|m|+n\leq N$, $(\ast)$ is bounded by the energy
$\mathcal{E}^N$.

For the last line in \eqref{EEs}, first note that $I^{m,n}$ in \eqref{I0n} and \eqref{Imn} consist of terms having at most $|m|+n$ derivatives of $D\eta$ with appropriate weights.  Hence as done for the previous terms, by using \eqref{Assumption} and the fact of $Dw\in X^{\alpha,N}$, it is easy to see that it is bounded by
\[
 \mathcal{E}^{m,n}+\mathcal{E}^{m+1,n-1}+\mathcal{E}^{m-1,n}+ \mathcal{E}^{m,n-1}
\]
This completes the proof of Lemma \ref{lemma1}.

\subsection{The proof of Lemma \ref{lemma2} : Energy estimates for $\mathcal{C}^N$}

It is well known that in Eulerian coordinates, the curl of a
gradient vector is zero. Here is the Lagrangian version.

\begin{claim} If $\omega^k= A^r_k h,_r$,
\[
\text{curl}_\eta \omega =0
\]
\end{claim}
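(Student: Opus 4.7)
My approach is to just unfold the definition of $\text{curl}_\eta$ and exploit two symmetric/antisymmetric contractions. By definition, $[\text{curl}_\eta \omega]^i = \epsilon_{ijk} A^s_j \omega^k,_s$, so substituting $\omega^k = A^r_k h,_r$ and applying the product rule gives
\begin{equation*}
[\text{curl}_\eta \omega]^i = \epsilon_{ijk} A^s_j A^r_k \, h,_{rs} + \epsilon_{ijk} A^s_j (A^r_k),_s \, h,_r.
\end{equation*}
I want to argue that each of these two pieces vanishes because a quantity symmetric in two indices is contracted against a quantity antisymmetric in those same two indices.

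For the first term, $h,_{rs}$ is symmetric in $r \leftrightarrow s$ (Clairaut), while $\epsilon_{ijk} A^s_j A^r_k$ becomes $-\epsilon_{ijk} A^s_j A^r_k$ after swapping $r \leftrightarrow s$ and renaming the dummy pair $j \leftrightarrow k$ (which introduces the sign from $\epsilon_{ikj} = -\epsilon_{ijk}$). So the first term is zero.

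For the second term, I use the identity \eqref{DA}, which gives $(A^r_k),_s = -A^r_p A^q_k \, \eta^p,_{qs}$. Plugging in,
\begin{equation*}
\epsilon_{ijk} A^s_j (A^r_k),_s h,_r = -\epsilon_{ijk} A^s_j A^q_k A^r_p \, \eta^p,_{qs} \, h,_r.
\end{equation*}
Here $\eta^p,_{qs}$ is symmetric in $q \leftrightarrow s$, while $\epsilon_{ijk} A^s_j A^q_k$ is antisymmetric in $q \leftrightarrow s$ by the exact same swap-and-relabel argument used above. Contracting a symmetric tensor against an antisymmetric tensor in the $(q,s)$ slot forces this term to vanish too, and the claim follows.

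\textbf{Expected obstacle.} There really isn't one; this is a purely algebraic verification that the curl of a gradient vanishes, transported from Eulerian to Lagrangian coordinates via the identity $[D_\eta F]^i_r = A^s_r F^i,_s$. The only thing to be careful about is bookkeeping of indices when invoking \eqref{DA} and when relabeling dummy pairs to expose the antisymmetry in $\epsilon_{ijk}$; doing this cleanly is the whole proof.
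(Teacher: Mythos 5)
Your proof is correct and essentially identical to the paper's: both expand $\omega^k,_s$ by the product rule, invoke \eqref{DA} for $A^r_k,_s$, and then kill each of the two resulting pieces by observing that $\epsilon_{ijk}$ is contracted against something symmetric. The paper phrases this as "both terms are symmetric in $j,k$," while you phrase it as a symmetric tensor ($h,_{rs}$ or $\eta^p,_{qs}$) contracted against an antisymmetric one; after the dummy relabeling you carry out in each case, these are the same maneuver.
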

\begin{proof}
\[
\begin{split}
[\text{curl}_\eta \omega]^i &= \epsilon_{ijk} A^s_j\,\omega^k,_s
=\epsilon_{ijk} A^s_j(A^r_k,_sh,_r+A^r_kh,_{rs})\\&=
\epsilon_{ijk} (-A^s_jA^r_m \eta^m,_{sn}A^n_kh,_r+A^s_jA^r_kh,_{rs})
\end{split}
\]
Since both terms are symmetric in $j,\,k$, the conclusion follows.
\end{proof}

The equation for $v$ in \eqref{Lag} is equivalent to
\[
v^i_t+ (1+\alpha) A^k_i (w J^{-1/\alpha}),_k=0
\]
and therefore, by the above claim, the curl
equation is written as follows:
\begin{equation}\label{curl1}
\text{curl}_\eta\partial_t v =0\quad (\epsilon_{ijk} A^s_j\,\partial_tv^k,_s=0)
\end{equation}
It can be also written
\begin{equation}\label{curl2}
\partial_t[\text{curl}_\eta v]^i =\epsilon_{ijk} \partial_tA^s_j\,v^k,_s
\end{equation}
In order to estimate $\mathcal{C}^{m,n}$, we first integrate
\eqref{curl2} in time:
\[
\text{curl}_\eta \partial_t\eta = \text{curl}\, u_0+\epsilon_{ijk}
\int_0^t
\partial_tA^s_j\,v^k,_s d\tau
\]
For $0<|m|+n\leq N$, we take $\partial_\beta^m\partial_3^n$ first to get
\begin{equation}
\begin{split}\label{curleta0}
  \partial_t[\text{curl}_\eta\partial_\beta^m\partial_3^n\eta]^i &=
  [\text{curl}\, \partial_\beta^m\partial_3^n u_0]^i\\
&+ \epsilon_{ijk}\partial_tA^s_j\cdot\partial_\beta^m\partial_3^n{\eta^k},_s- \sum_{|p|+q\geq 1}
\epsilon_{ijk}
 \partial_\beta^p\partial_3^qA^s_j\cdot\partial_\beta^{m-p}\partial_3^{n-q}\partial_t\eta^k,_s\\
&+\sum_{|p|+q\geq 0} \epsilon_{ijk}\int_0^t\partial_\beta^p\partial_3^q
\partial_tA^s_j\cdot
\partial_\beta^{m-p}\partial_3^{n-q}v^k,_s d\tau
 \end{split}
\end{equation}
The last term in \eqref{curleta0} seems to contain more derivatives than the total energy. However, since each term is integrated in time, 
by the fundamental theorem of Calculus, we can move a time derivative of one factor to the other. 
If $|p|+q\geq [N/2]$, we write it as
\[
\begin{split}
\int_0^t\partial_\beta^p\partial_3^q
\partial_tA^s_j\cdot
\partial_\beta^{m-p}\partial_3^{n-q}v^k,_s d\tau&= \partial_\beta^p\partial_3^q
A^s_j\cdot
\partial_\beta^{m-p}\partial_3^{n-q}v^k,_s - \partial_\beta^p\partial_3^q
A^s_j\cdot
\partial_\beta^{m-p}\partial_3^{n-q}v^k,_s(0)\\
&\quad-\int_0^t \partial_\beta^p\partial_3^q A^s_j\cdot 
\partial_\beta^{m-p}\partial_3^{n-q}\partial_tv^k,_s d\tau
\end{split}
\]
and if $|p|+q\leq [N/2]$,
\[
\begin{split}
\int_0^t\partial_\beta^p\partial_3^q
\partial_tA^s_j\cdot 
\partial_\beta^{m-p}\partial_3^{n-q}v^k,_s d\tau&= \partial_\beta^p\partial_3^q
\partial_tA^s_j\cdot 
\partial_\beta^{m-p}\partial_3^{n-q}\eta^k,_s - \partial_\beta^p\partial_3^q
\partial_tA^s_j\cdot 
\partial_\beta^{m-p}\partial_3^{n-q}\eta^k,_s(0)\\
&\quad-\int_0^t \partial_\beta^p\partial_3^q \partial_t^2A^s_j\cdot 
\partial_\beta^{m-p}\partial_3^{n-q}\eta^k,_s d\tau
\end{split}
\]
Note that $\partial_\beta^p\partial_3^q
A^s_j(0)=0$ for  $|p|+q\geq [N/2]$ and $\partial_\beta^{m-p}\partial_3^{n-q}\eta^k,_s(0)=0$ for 
$|p|+q\leq [N/2]$ since $[D\eta] (0)=I$, and thus there will be no contribution of initial data. Hence, 
the curl equation \eqref{curleta0} can be written as 
\begin{equation}
\begin{split}\label{curleta}
  \partial_t[\text{curl}_\eta&\partial_\beta^m\partial_3^n\eta]^i=
  [\text{curl}\, \partial_\beta^m\partial_3^n u_0]^i+
2\epsilon_{ijk}
\partial_tA^s_j\cdot\partial_\beta^m\partial_3^n\eta^k,_s\\
&+\sum_{0<|p|+q<[{N}/{2}]}\epsilon_{ijk}(\partial_\beta^p\partial_3^q
\partial_tA^s_j\cdot 
\partial_\beta^{m-p}\partial_3^{n-q}\eta^k,_s- \partial_\beta^p\partial_3^qA^s_j\cdot\partial_\beta^{m-p}\partial_3^{n-q}\partial_t\eta^k,_s) \\
&-\sum_{|p|+q\geq [{N}/{2}]}\epsilon_{ijk} \int_0^t \partial_\beta^p\partial_3^q A^s_j\cdot 
\partial_\beta^{m-p}\partial_3^{n-q}\partial_t^2\eta^k,_s d\tau\\
&-\sum_{|p|+q<[{N}/{2}]}\epsilon_{ijk}
 \int_0^t \partial_\beta^p\partial_3^q \partial_t^2A^s_j\cdot 
\partial_\beta^{m-p}\partial_3^{n-q}\eta^k,_s d\tau\end{split}
\end{equation} 
Note that each factor in the right-hand side has no more than $|m|+n+1$ derivatives of $\eta$. Hence, 
in multiplying \eqref{curleta} by
 $w^{1+\alpha+n}J^{-1/\alpha}[\text{curl}_\eta\partial_\beta^m\partial_3^n\eta]^i$, 
integrating over $\Omega$, and using the a priori assumption \eqref{Assumption} as
done in the previous energy estimates, we obtain the desired energy inequality \eqref{e2} in Lemma \ref{lemma2}. 

The energy bound \eqref{b} for $\mathcal{B}^N(v)$  is obtained directly from \eqref{curleta} since 
$ \partial_t[\text{curl}_\eta\partial_\beta^m\partial_3^n\eta]^i= [\text{curl}_\eta\partial_\beta^m\partial_3^nv]^i+\epsilon_{ijk}
\partial_tA^s_j\cdot\partial_\beta^m\partial_3^n\eta^k,_s $ and each term in the right-hand side of 
\eqref{curleta} is bounded by $\mathcal{E}^N(\eta,v)$ and $C_0$.  This completes the proof of 
Lemma \ref{lemma2}. 

\section{Existence proof} \label{exist-proof}

\subsection{Iteration Scheme}\label{IS}

In this section, we implement the linear approximate scheme and prove that the linear system is 
well-posed in some energy space.

Let the initial data $\eta(0,x)=\eta_0=x$ and $\partial_t\eta(0,x)=u_0(x)$ of the Euler equation \eqref{deacoustic} be given so that $\mathcal{TE}^N(0)\leq B$ for a constant $B>0$ in 
\eqref{ie}.  We will construct 
approximate solutions $\eta_\nu(t,x)$ and $\partial_t\eta_\nu(t,x)$ for each nonnegative integer 
$\nu$, by induction satisfying 
the following properties: 
\begin{equation}
\begin{split}\label{Assume1}
 \eta_\nu(0,x)=x\,, \;\partial_t\eta_\nu(0,x)=u_0(x)\,,\;
 \|A_\nu-I \|_\infty\leq C_\nu\text{ for }C_\nu \leq  1/8
 \end{split}
\end{equation}
as well as 
\begin{equation}\label{Assume2}
 \sum_{|p|+q=0}^{[N/2]} |w^{q/2}\partial_\beta^p\partial_3^q{\eta_\nu}^r,_s|
+ \sum_{|p|+q=0}^{[N/2]-1}|w^{q/2}\partial_\beta^p\partial_3^q{\partial_t\eta_\nu}^r,_s|<\infty\,
\end{equation}
where  the approximate inverse of deformation tensor and Jacobian determinant are defined by
\begin{equation}\label{n}
 A_{\nu}\equiv [D\eta_{\nu}]^{-1}\;;\; J_{\nu}\equiv \det D\eta_{\nu}\,.
\end{equation}
The condition $\|A_\nu-I \|_\infty\leq C_\nu\text{ for }C_\nu \leq  1/8$ in \eqref{Assume1} also guarantees  the non-degeneracy of the approximate flow map:
$$2/3\leq \frac{1}{1+3C_\nu+6C_\nu^2+6C_\nu^3}\leq J_\nu \leq \frac{1}{1-3C_\nu-6C_\nu^2-6C_\nu^3} \leq 2$$ 
The differentiation of $A_\nu$ and $J_\nu$, which is similar to \eqref{DA} and \eqref{DJ},  is given by 
\begin{equation}\label{Dn}
 \begin{split}
 \partial_t{A_{\nu}}^k_i=-{A_{\nu}}^k_r\partial_t{\eta_\nu}^r,_s{A_{\nu}}^s_i\;&;\;
  \partial_l{A_{\nu}}^k_i=-{A_{\nu}}^k_r\partial_l{\eta_\nu}^r,_s{A_{\nu}}^s_i\\
\partial_t{J_{\nu}}=J_\nu{A_{\nu}}^s_r\partial_t{\eta_\nu}^r,_s\;&;\; \partial_l{J_{\nu}}=J_\nu{A_{\nu}}^s_r\partial_l{\eta_\nu}^r,_s
 \end{split}
\end{equation}
Note that Piola identity \eqref{Piola} also holds for $A_\nu$: ${a_\nu}_i^k,_k=0$. 
Having $A_\nu$ and $J_\nu$ defined, we now introduce the following vector-valued, non-negative and weighted second order linear operators $\mathcal{L}^e_\nu$, $\mathcal{L}^d_\nu$ for a given vector $G$ and a weighted first order linear operator $\mathcal{L}^c_\nu$ for a given anti-symmetric matrix $H$ as follows:
\begin{equation}
 \begin{split}\label{L}
[ \mathcal{L}^e_\nu\,G]^i&\equiv -(w^{1+\alpha}\,{J_\nu}^{-1/\alpha}{A_\nu}^k_r{A_\nu}^s_r G^i,_s),_k\\
[\mathcal{L}^d_\nu\,G]^i&\equiv-\tfrac{1}{\alpha}(w^{1+\alpha}\,{J_\nu}^{-1/\alpha}{A_\nu}^k_i
{A_\nu}^s_r{G}^r,_s),_k\\
[\mathcal{L}^c_\nu\,H]^i&\equiv -(w^{1+\alpha}\,{J_\nu}^{-1/\alpha}{A_\nu}^k_rH^r_i),_k
 \end{split}
\end{equation}
For the iteration scheme, instead of $\eta$, we approximate $[-\partial_1^2-\partial_2^2-w^{-\alpha}\partial_3\,w^{1+\alpha}\partial_3+\lambda ]\,\eta\equiv G$ for a sufficiently large positive constant $\lambda>0$ and $\text{Curl}_\eta G=H$.  
For each $\nu\geq 0$, consider the following approximate system for $G_{\nu+1}$ and $H_{\nu+1}$: 
\begin{equation}\label{approx}
 \begin{split}
 w^\alpha\partial_t^2{G_{\nu+1}}
+\mathcal{L}^e_\nu\,{G_{\nu+1}}
+\mathcal{L}^d_\nu\,{G_{\nu+1}}
+ \mathcal{L}^c_\nu\,{H_{\nu+1}} &= w^\alpha\mathcal{R}_\nu \\
\partial_t H_{\nu+1} &= \mathcal{S}_\nu
 \end{split}
\end{equation}
where $\mathcal{R}_\nu$ and $\mathcal{S}_\nu$ are obtained by replacing $\eta$ by $\eta_\nu$ in $\mathcal{R}$ and $\mathcal{S}$ given in \eqref{G} and \eqref{H}.  For the convenience of readers, we put the formal derivation of $G$ and $H$ in Section \ref{GH}. 
Note that $\mathcal{R}_\nu$ and $\mathcal{S}_\nu$ consist of  lower order terms and moreover, 
$$\|\mathcal{R}_\nu \|_{X^{\alpha,N-2}}<\infty\text{ and }\|\mathcal{S}_\nu\|_{Z_\nu^{\alpha,N-2}}<\infty\text{ if }\mathcal{E}^N(\eta_\nu,\partial_t\eta_\nu)<\infty.$$ And also note that $[\mathcal{S}_\nu]^k_j=-[\mathcal{S}_\nu]^j_k$ and thus $H_{\nu+1}$ is  anti-symmetric. 

The initial  data for $G_{\nu+1}$ and $H_{\nu+1}$ are inherited from the original system: 
\begin{equation}
\begin{split}
& {G_{\nu+1}}^i(0,x)=-(1+\alpha)\partial_3w\delta^i_3 +\lambda x^i,\\
& \partial_tG_{\nu+1}(0,x)=[-\partial_1^2-\partial_2^2-w^{-\alpha}
\partial_3w^{1+\alpha}\partial_3+\lambda]u_0(x)\,,
 \,H_{\nu+1}(0,x)=0\,.
 \end{split}\label{id}
\end{equation}
The unique $\eta_{\nu+1}$ is to be found by solving a degenerate elliptic equation 
\begin{equation}\label{de}
 [-\partial_1^2-\partial_2^2-w^{-\alpha}\partial_3\,w^{1+\alpha}\partial_3+\lambda]\,\eta_{\nu+1}= G_{\nu+1}
\end{equation}
 The same goes for $\partial_tG_{\nu+1}$. $A_{\nu+1}$ and $J_{\nu+1}$ are defined as in \eqref{n}. 
We remark that the approximate systems 
\eqref{approx} converge to the equations for $G$ and $\text{Curl}_\eta G$ respectively in the formal limit $\nu\rightarrow\infty $.

We define the approximate energy functional $\tilde{\mathcal{E}}_{\nu+1}$ at the $\nu^{\text{th}}$ step: 
\begin{equation}
\begin{split}\label{AE}
\tilde{\mathcal{E}}_{\nu+1}(t)&\equiv\sum_{|m|+n=0}^{N-2}
\{\frac{1}{2}\int w^{\alpha+n}|\partial_\beta^m\partial_3^n\partial_tG_{\nu+1}|^2dx+
\frac12\int  w^{1+\alpha+n} 
J_\nu^{-\frac{1}{\alpha}}|D_{\eta_\nu}\partial_\beta^m\partial_3^nG_{\nu+1}|^2 dx  \\
&\quad\quad\quad\quad\;+\frac{1}{2\alpha}\int w^{1+\alpha+n} 
J_\nu^{-\frac{1}{\alpha}}|\text{div}_{\eta_\nu}\partial_\beta^m\partial_3^nG_{\nu+1}|^2 dx 
\\&\quad\quad\quad\quad\;-\frac{1}{2}\int w^{1+\alpha+n} 
J_\nu^{-\frac{1}{\alpha}} \text{Curl}_{\eta_\nu} \partial_\beta^m\partial_3^nG_{\nu+1} \cdot 
\partial_\beta^m\partial_3^nH_{\nu+1} dx\\
&\quad\quad\quad\quad\;+ 2 \int w^{1+\alpha+n} 
J_\nu^{-\frac{1}{\alpha}} |\partial_\beta^m\partial_3^nH_{\nu+1}|^2dx  \}\\
&\equiv \;\frac12\|\partial_tG_{\nu+1}\|^2_{X^{\alpha,N-2}}+ \frac12\|G_{\nu+1}\|^2_{Y_\nu^{\alpha,N-2}}+ 2\|H_{\nu+1}\|^2_{Z_\nu^{\alpha,N-2}}\\
&\quad+\sum_{|m|+n=0}^{N-2}\{\mathcal{D}_\nu^{m,n}(G_{\nu+1})
-\frac{1}{2}\int w^{1+\alpha+n} 
J_\nu^{-\frac{1}{\alpha}} \text{Curl}_{\eta_\nu} \partial_\beta^m\partial_3^nG_{\nu+1} \cdot \partial_\beta^m\partial_3^nH_{\nu+1} dx\}
\end{split}
\end{equation}
where $Y_\nu^{\alpha,N-2}$ and $Z_\nu^{\alpha,N-2}$ denote $Y^{\alpha,N-2}$ and $Z^{\alpha,N-2}$ in  \eqref{XY} induced by $\eta_\nu$. Note that by Cauchy-Schwartz inequality, 
\[
\begin{split}
&|\frac12\int w^{1+\alpha+n} 
J_\nu^{-\frac{1}{\alpha}} \text{Curl}_{\eta_\nu} \partial_\beta^m\partial_3^nG_{\nu+1} \cdot 
\partial_\beta^m\partial_3^nH_{\nu+1} dx|\\
&\leq \frac{1}{3} \int  w^{1+\alpha+n} 
J_\nu^{-\frac{1}{\alpha}}|D_{\eta_\nu}\partial_\beta^m\partial_3^nG_{\nu+1}|^2 dx  
+\frac32  \int w^{1+\alpha+n} 
J_\nu^{-\frac{1}{\alpha}} |\partial_\beta^m\partial_3^nH_{\nu+1}|^2dx
\end{split}
\]
and therefore
\begin{equation}
\begin{split}\label{ee}
\frac12\|\partial_tG_{\nu+1}\|^2_{X^{\alpha,N-2}}+ \frac16\|G_{\nu+1}\|^2_{Y_\nu^{\alpha,N-2}}+ 
\frac12\|H_{\nu+1}\|^2_{Z_\nu^{\alpha,N-2}}\leq \tilde{\mathcal{E}}_{\nu+1}&\\
\leq \frac12\|\partial_tG_{\nu+1}\|^2_{X^{\alpha,N-2}}+ (\frac56+\frac{1}{2\alpha})\|G_{\nu+1}\|^2_{Y_\nu^{\alpha,N-2}}&+ \frac72\|H_{\nu+1}\|^2_{Z_\nu^{\alpha,N-2}} 
\end{split}
\end{equation}

We now state and prove that the approximate system \eqref{approx} is well-posed in the energy 
space generated by ${\mathcal{E}}_\nu$ under the following induction hypotheses: \\

(HP1) $\tilde{\mathcal{E}}_\nu<\infty$ and $\eta_\nu$ and $\partial_t\eta_\nu$ satisfy \eqref{Assume1}.

(HP2) The left-hand side of \eqref{Assume2} is bounded by $\tilde{\mathcal{E}}_\nu$ and 
 $\tilde{\mathcal{E}}_{\nu-1}$.  

(HP3) $\mathcal{E}^N(\eta_\nu,\partial_t\eta_\nu)$ is bounded by $\tilde{\mathcal{E}}_\nu$ and $\tilde{\mathcal{E}}_{\nu-1}$.  \\

\begin{proposition}[Well-Posedness of Approximate system and Regularity]\label{prop2} Under hypotheses 
(HP1) - (HP3), linear system \eqref{approx} admits a unique solution 
$(\partial_tG_{\nu+1},G_{\nu+1}, H_{\nu+1})$ in $X^{\alpha,N-2}, Y_\nu^{\alpha,N-2}, Z_\nu^{\alpha,N-2}$. Furthermore, we obtain the following energy bounds: 
\[
\tilde{\mathcal{E}}_{\nu+1}(t)\leq \tilde{\mathcal{E}}_{\nu+1}(0)+  \int_0^t \mathcal{F}_7
(\tilde{\mathcal{E}}_{\nu+1},\tilde{\mathcal{E}}_\nu,
 \tilde{\mathcal{E}}_{\nu-1},B^N(u_0))(\tilde{\mathcal{E}}_{\nu+1})^{\frac12}d\tau
\]
where $\mathcal{F}_7
(\tilde{\mathcal{E}}_{\nu+1},\tilde{\mathcal{E}}_\nu,
 \tilde{\mathcal{E}}_{\nu-1},B^N(u_0))$ is a continuous function of $
\tilde{\mathcal{E}}_{\nu+1},\,\tilde{\mathcal{E}}_\nu,\,
 \tilde{\mathcal{E}}_{\nu-1},\,B^N(u_0)$. 
\end{proposition}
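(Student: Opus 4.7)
The structure of the approximate system allows a two-step decoupling. The $H_{\nu+1}$ equation is a pure time ODE (pointwise in $x$), so I would first set
\[
H_{\nu+1}(t,x) = \int_0^t \mathcal{S}_\nu(\tau,x)\, d\tau,
\]
which is anti-symmetric since $\mathcal{S}_\nu$ is, and which lies in $C([0,T]; Z_\nu^{\alpha,N-2})$ because the hypotheses give $\|\mathcal{S}_\nu\|_{Z_\nu^{\alpha,N-2}} < \infty$. Substituting this into the first equation of \eqref{approx} reduces the problem to a single linear degenerate wave equation
\[
w^\alpha \partial_t^2 G_{\nu+1} + \mathcal{L}^e_\nu G_{\nu+1} + \mathcal{L}^d_\nu G_{\nu+1} = w^\alpha \mathcal{R}_\nu - \mathcal{L}^c_\nu H_{\nu+1},
\]
with the initial data \eqref{id}. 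The coefficients depend only on the known $\eta_\nu$, and the hypothesis $\|A_\nu - I\|_\infty \le 1/8$ ensures the symmetric part $\mathcal{L}^e_\nu + \mathcal{L}^d_\nu$ is uniformly coercive: its associated bilinear form controls $\|G\|_{Y_\nu^{\alpha,0}}^2$.

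For existence I would follow the duality approach of \cite{JM}. One sets up a bilinear form
\[
B_\nu(G,\Phi) = \int_0^T\!\!\int_\Omega \bigl\{ w^\alpha \partial_t G \cdot \partial_t \Phi
- w^{1+\alpha} J_\nu^{-1/\alpha}\bigl[A_\nu^k{}_r A_\nu^s{}_r G^i,_s\,\Phi^i,_k + \tfrac{1}{\alpha} A_\nu^k{}_i A_\nu^s{}_r G^r,_s\,\Phi^i,_k\bigr]\bigr\}\,dx\,dt
\]
on a suitable test space of functions vanishing at $t=T$, and proves coercivity via a backward-in-time energy estimate on the adjoint problem; the Riesz representation (or a standard Galerkin/approximation scheme followed by compactness) then produces a weak solution $G_{\nu+1}$. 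Uniqueness follows from the same backward estimate. Regularity at the level of $X^{\alpha,N-2} \times Y_\nu^{\alpha,N-2}$ is then upgraded by differencing or by running the same duality argument on the differentiated system $\partial_\beta^m\partial_3^n G_{\nu+1}$, since the operator structure is preserved under tangential derivatives and each normal derivative simply adjusts the weight from $w^{\alpha+n}$ to $w^{\alpha+n+1}$, exactly as in Section \ref{3}.

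For the energy bound I would apply $\partial_\beta^m\partial_3^n$ to the $G_{\nu+1}$ equation, take the $L^2$ inner product with $\partial_\beta^m\partial_3^n \partial_t G_{\nu+1}$, and integrate by parts just as in the proof of Lemma \ref{lemma1}. The $w^\alpha\partial_t^2$ term yields $\tfrac{d}{dt}\tfrac12\|\partial_tG_{\nu+1}\|^2_{X^{\alpha,N-2}}$; the $\mathcal{L}^e_\nu$ term gives $\tfrac{d}{dt}\tfrac12\|G_{\nu+1}\|^2_{Y_\nu^{\alpha,N-2}}$ plus commutators involving $\partial_t J_\nu^{-1/\alpha}$, $\partial_t A_\nu$, and derivatives hitting $w$ (controlled by hypothesis (HP3) and $Dw\in X^{\alpha,N}$); the $\mathcal{L}^d_\nu$ term yields the divergence energy $\mathcal{D}^{m,n}_\nu(G_{\nu+1})$; and the $\mathcal{L}^c_\nu H_{\nu+1}$ coupling generates the cross term that is precisely cancelled (up to $\tfrac{d}{dt}$) by combining with the $\partial_\beta^m\partial_3^n$-differentiated equation for $H_{\nu+1}$ tested against $\partial_\beta^m\partial_3^n H_{\nu+1}$, using the anti-symmetrization identity from Lemma \ref{lemma1}. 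This is exactly why the cross term is built into $\tilde{\mathcal{E}}_{\nu+1}$, and the Cauchy–Schwarz bound \eqref{ee} guarantees the combined quantity is comparable to $\|\partial_t G_{\nu+1}\|^2_{X^{\alpha,N-2}} + \|G_{\nu+1}\|^2_{Y_\nu^{\alpha,N-2}} + \|H_{\nu+1}\|^2_{Z_\nu^{\alpha,N-2}}$.

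The main obstacle is the existence step: producing a genuine solution of a linear degenerate hyperbolic equation whose coefficients vanish on the boundary, in the exact weighted space in which one hopes to close the estimate. Formally the energy computation is straightforward, but its rigorous use requires an approximation scheme (e.g.\ elliptic regularization $\mathcal{L}^e_\nu + \mathcal{L}^d_\nu \leadsto \mathcal{L}^e_\nu + \mathcal{L}^d_\nu + \varepsilon(-\Delta)$, or a Galerkin basis respecting the degeneracy of $w$) whose limit can only be identified because the duality argument on the adjoint provides uniform bounds independent of the regularization. Once existence in the base space is secured, higher regularity and the quantitative estimate on $\tilde{\mathcal{E}}_{\nu+1}$ follow by the tangential/normal derivative scheme already worked out for the a priori estimates, giving the stated bound with $\mathcal{F}_7$ absorbing all commutator contributions in terms of $\tilde{\mathcal{E}}_{\nu+1}, \tilde{\mathcal{E}}_\nu, \tilde{\mathcal{E}}_{\nu-1}, \mathcal{B}^N(u_0)$.
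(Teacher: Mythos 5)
Your proposal is correct, but it takes a genuinely different route to existence than the paper. The paper proves Proposition~\ref{prop2} by a one-line reduction to Proposition~\ref{dual}, whose proof runs the duality machinery on the \emph{coupled} operator $\mathcal{V}\binom{G}{H}$: one computes the adjoint $\mathcal{V}^\ast$, observes that its $\phi$-component decouples from $\psi$, derives energy estimates for the adjoint in the form $\sup_t\{\frac12\|\partial_t\phi\|^2_{X^{\alpha,0}}+\frac16\|\phi\|^2_{Y^{\alpha,0}}+\frac12\|\psi\|^2_{Z^{\alpha,0}}\}\leq C\int_0^T\|\Phi\|^2_{{X^{\alpha,0}}^\ast}+\|\Psi\|^2_{{Z^{\alpha,0}}^\ast}\,dt$, and then uses Hahn--Banach to produce $(G_{\nu+1},H_{\nu+1})$ simultaneously. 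You instead exploit the \emph{forward} decoupling: since $\partial_t H_{\nu+1}=\mathcal{S}_\nu$ is a pointwise-in-$x$ time ODE with known right-hand side, you solve it by integration and then treat $\mathcal{L}^c_\nu H_{\nu+1}$ as a known forcing in a single degenerate wave equation for $G_{\nu+1}$. This buys a conceptually cleaner existence step (one scalar duality/Galerkin problem rather than a system), at the small cost that the forcing $\mathcal{L}^c_\nu H_{\nu+1}$ must be handled weakly, since it involves one derivative of $H_{\nu+1}$ which is not separately controlled in $Z_\nu^{\alpha,0}$; the weak formulation you write, like the paper's \eqref{weakf}, integrates by parts onto the test function and so avoids this. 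The paper's coupled framing is arguably more systematic since the identical machinery is reused, with the roles of $\mathcal{V}$ and $\mathcal{V}^\ast$ reversed, to prove uniqueness. Two smaller points. (i) You correctly identify that even with $H_{\nu+1}$ known, you must carry the cross term $-\tfrac12\int w^{1+\alpha+n}J_\nu^{-1/\alpha}\,\mathrm{Curl}_{\eta_\nu}\partial_\beta^m\partial_3^nG_{\nu+1}\cdot\partial_\beta^m\partial_3^nH_{\nu+1}\,dx$ in the energy, because $\int\mathcal{L}^c_\nu H_{\nu+1}\cdot\partial_t G_{\nu+1}$ after integration by parts involves $D\partial_t G_{\nu+1}$, which is not controlled; however, the mechanism is that the cancellation comes from pulling the $\partial_t$ off $G_{\nu+1}$ in this cross term (leaving a harmless $\mathrm{Curl}_{\eta_\nu}G_{\nu+1}\cdot\partial_t H_{\nu+1}=\mathrm{Curl}_{\eta_\nu}G_{\nu+1}\cdot\mathcal{S}_\nu$), not from ``testing the $H$-equation against $H$,'' which is a separate computation that produces the $2\|H_{\nu+1}\|^2_{Z_\nu^{\alpha,N-2}}$ piece of $\tilde{\mathcal{E}}_{\nu+1}$. (ii) Proposition~\ref{dual} is stated with zero initial data while \eqref{id} gives nonzero data, so both you and the paper implicitly reduce to the homogeneous case by subtracting a fixed lift of the initial data; it would be worth recording this reduction explicitly.
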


Note that $\mathcal{R}_\nu\in Y^{\alpha,N-2}_\nu$ and $\mathcal{S}_\nu\in Z_\nu^{\alpha,N-2}$ under (HP3).  Proposition \ref{prop2} directly follows from Proposition \ref{dual} given in Section \ref{duality}.

In order to complete the induction procedure of approximate schemes, it now remains to verify the induction hypotheses (HP1), (HP2), and (HP3) for $\nu+1$.  
 
 By the following Elliptic Regularity lemma, which will be proven in Section \ref{proof-er}, we obtain a unique $\eta_{\nu+1}$ to the degenerate 
 elliptic equation \eqref{de}.  

\begin{lemma}\label{er} Let $k\geq 0$ be given. For each $G\in X^{\alpha,k}$, there exists a unique solution  $u\in X^{\alpha,k+2}$ to the following degenerate elliptic equation 
\[
[-\partial_1^2-\partial_2^2-w^{-\alpha}\partial_3w^{1+\alpha}\partial_3+\lambda]u= G
\]
Moreover, we have 
\[
\|u\|_{X^{\alpha,k+2}}\precsim 
\|G\|_{X^{\alpha,k}}\,.
\]
\end{lemma}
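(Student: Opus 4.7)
The plan is to establish this in two stages: existence and uniqueness via Lax--Milgram in the weighted Hilbert space $X^{\alpha,1}$, and the regularity bound by decoupling in tangential Fourier modes and using the equation itself to recover the top-order normal derivative.

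For existence, since $w = w(x_3)$ depends only on the normal variable, the bilinear form naturally associated to the operator is
\[
a(u,v) = \int_\Omega \bigl[w^\alpha \nabla_\beta u \cdot \nabla_\beta v + w^{1+\alpha} \partial_3 u\, \partial_3 v + \lambda w^\alpha uv\bigr]\, dx,
\]
which is continuous and coercive on $X^{\alpha,1}$ for $\lambda>0$. For $G \in X^{\alpha,0}$, the functional $v \mapsto \int_\Omega w^\alpha Gv\,dx$ is bounded on $X^{\alpha,1}$, so Lax--Milgram produces a unique weak solution $u$ with $\|u\|_{X^{\alpha,1}} \precsim \|G\|_{X^{\alpha,0}}$.

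For the regularity estimate, I would Fourier-expand $u = \sum_\xi \hat u_\xi(x_3)\,e^{i\xi\cdot x_\beta}$ and similarly $G$. The equation decouples mode-by-mode,
\[
(|\xi|^2+\lambda)\hat u_\xi - w^{-\alpha}\partial_3\bigl(w^{1+\alpha}\partial_3 \hat u_\xi\bigr) = \hat G_\xi \quad \text{on }(0,1),
\]
and testing against $\hat u_\xi$ in $L^2(w^\alpha\,dx_3)$ yields
\[
(|\xi|^2+\lambda)^2\|\hat u_\xi\|_{L^2(w^\alpha)}^2 + (|\xi|^2+\lambda)\|\partial_3 \hat u_\xi\|_{L^2(w^{1+\alpha})}^2 \precsim \|\hat G_\xi\|_{L^2(w^\alpha)}^2.
\]
Summing in $\xi$ with the polynomial weights $|\xi|^{2|m|}$ and using Parseval, this already controls every piece of the $X^{\alpha,k+2}$ norm involving $n\in\{0,1\}$ by $\|G\|_{X^{\alpha,k}}^2$, since the powers of $|\xi|^2+\lambda$ absorb exactly the needed tangential weights.

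The main obstacle is bounding $w^{1+\alpha/2}\partial_3^2 \hat u_\xi$ in $L^2$, because the basic energy only gives $w^{(1+\alpha)/2}\partial_3 \hat u_\xi \in L^2$. For this I would rewrite the equation as
\[
w^{-\alpha/2}\partial_3\bigl(w^{1+\alpha}\partial_3 \hat u_\xi\bigr) = w^{\alpha/2}\bigl[(|\xi|^2+\lambda)\hat u_\xi - \hat G_\xi\bigr],
\]
whose right-hand side has $L^2(dx_3)$ norm $\precsim \|\hat G_\xi\|_{L^2(w^\alpha)}$ by the previous estimate. Expanding the left side as $w^{1+\alpha/2}\partial_3^2 \hat u_\xi + (1+\alpha)w^{\alpha/2}w'\partial_3 \hat u_\xi$, squaring, and integrating on $(0,1)$, the cross term equals $(1+\alpha)\int w^{1+\alpha}w'\,\partial_3[(\partial_3 \hat u_\xi)^2]\,dx_3$. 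Integration by parts in $x_3$ (boundary terms vanish since $w^{1+\alpha}=0$ on $\partial\Omega$), combined with the identity $\partial_3[w^{1+\alpha}w'] = (1+\alpha)w^\alpha(w')^2 + w^{1+\alpha}w''$, produces an exact cancellation of the otherwise-delicate $(1+\alpha)^2\int w^\alpha(w')^2(\partial_3 \hat u_\xi)^2\,dx_3$ contribution, leaving only $\|w^{1+\alpha/2}\partial_3^2 \hat u_\xi\|_{L^2}^2$ modulo the harmless remainder $-(1+\alpha)\int w^{1+\alpha}w''(\partial_3 \hat u_\xi)^2\,dx_3$, which is absorbed using $\|w''\|_\infty<\infty$ and the bound on $\partial_3 \hat u_\xi$. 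Summing in $\xi$ gives the $\partial_3^2 u$ bound, completing the case $k=0$. For general $k\geq 1$ I would iterate: since $\partial_\beta$ commutes with the operator, $\partial_\beta^m u$ satisfies the same equation with data $\partial_\beta^m G$, and higher normal derivatives $\partial_3^n u$ for $n\geq 3$ are recovered by differentiating the equation in $x_3$ and bootstrapping, the commutators with $w^\alpha, w^{1+\alpha}$ being controlled inductively via the $X^{\alpha,k}$ bound on $Dw$ and lower-order normal derivatives. Rigor of the integration by parts is ensured throughout by first working with truncated Fourier sums and passing to the limit.
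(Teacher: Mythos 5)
Your proof hinges on the assumption, stated explicitly at the top ("since $w = w(x_3)$ depends only on the normal variable"), that the weight does not depend on the tangential variables. But the paper makes no such hypothesis: the weight $w = K\rho_0^{\gamma-1}$ is an arbitrary initial enthalpy profile satisfying \eqref{wcondition}, which only forces $w \sim d(x)$ near $\Gamma(0) = \{x_3 = 0\}\cup\{x_3=1\}$ and leaves its dependence on $(x_1,x_2)$ free. The paper even flags this explicitly in its own proof ("Notice that $B$ is not symmetric if $w$ depends on the tangential variable"), and the tangential source terms $G_\beta$ in the bootstrap contain $\partial_\beta w$ precisely because that dependence is allowed. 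Once $w$ depends on $x_\beta$, the equation no longer decouples under tangential Fourier expansion, so the entire Parseval/mode-by-mode machinery collapses; you would also lose the symmetry and unconditional coercivity of your bilinear form $a(u,v)$, which is why the paper requires $\lambda$ large rather than merely $\lambda>0$. This is a genuine gap, not a cosmetic simplification: the lemma is needed for the approximate scheme \eqref{approx}--\eqref{de} with a general $w$.

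That said, in the special case $w=w(x_3)$ your computation is correct and in places nicer than the paper's. The basic mode estimate $(|\xi|^2+\lambda)^2\|\hat u_\xi\|_{L^2(w^\alpha)}^2 + (|\xi|^2+\lambda)\|\partial_3\hat u_\xi\|_{L^2(w^{1+\alpha})}^2 \precsim \|\hat G_\xi\|_{L^2(w^\alpha)}^2$, the Parseval summation against $|\xi|^{2|m|}$ using the decay of $\hat G_\xi$, and the clean cancellation $\int w^{-\alpha}|\partial_3(w^{1+\alpha}\partial_3\hat u_\xi)|^2 = \int w^{2+\alpha}|\partial_3^2\hat u_\xi|^2 - (1+\alpha)\int w^{1+\alpha}w''|\partial_3\hat u_\xi|^2$ are all correct. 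The paper instead proceeds by Lax--Milgram followed by difference quotients for tangential regularity, and for normal regularity makes the structural observation (which your plan glosses over) that $\partial_3 u$ solves an equation of exactly the same type but with $\alpha$ replaced by $\alpha+1$, i.e.\ the operator becomes $w^{-\alpha-1}\partial_3 w^{2+\alpha}\partial_3$, with right-hand side $G_3 = \partial_3 G + (1+\alpha)\partial_3^2 w\, \partial_3 u$; this shift in degeneracy index is what drives the induction on $n$ and produces the weight $w^{\alpha+n}$ in the $X^{\alpha,k+2}$ norm. If you want to salvage your argument, you should either (i) add the hypothesis $w=w(x_3)$ and accept the loss of generality, or (ii) replace the Fourier decoupling by the paper's difference-quotient/Lax--Milgram bootstrap for the tangential derivatives, keeping your integration-by-parts identity only as an alternative for the $\partial_3^2$ step. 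You should also make the $\alpha\to\alpha+n$ shift explicit in the normal-derivative induction rather than leaving it to "commutators... controlled inductively."
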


First (HP1). By Lemma \ref{er}, $\eta_{\nu+1}$ and $\partial_t\eta_{\nu+1}$ constructed in the above satisfy the following 
\[
\|\partial_t\eta_{\nu+1}\|_{X^{\alpha,N}}+\|\eta_{\nu+1}\|_{Y_\nu^{\alpha,N}}\precsim 
\|\partial_tG_{\nu+1}\|_{X^{\alpha,N-2}}+\|G_{\nu+1}\|_{Y_\nu^{\alpha,N-2}}
\]
as well as  the initial boundary conditions in \eqref{Assume1}.  The boundedness of $J_{\nu+1}$ will follow from the continuity argument by using the estimate of $\partial_tJ_{\nu+1}$.  Note that from Jacobi's formula 
$$\partial_tJ_{\nu+1}=\partial_t\det D\eta_{\nu+1}=\text{tr}(\text{adj}(D\eta_{\nu+1})\partial_t
D\eta_{\nu+1})$$ where $\text{adj}(\cdot)$ denotes the adjugate of a given matrix. Since 
$\partial_t\eta_{\nu+1}\in X^{\alpha,N}$ and $\eta_{\nu+1}\in Y_\nu^{\alpha,N}$, by Lemma \ref{emb}, $|D\eta_{\nu+1}|$ and $|\partial_tD\eta_{\nu+1}|$ are bounded by their $X^{\alpha,N}$ and $Y_\nu^{\alpha,N}$ norms and hence bounded by $\tilde{\mathcal{E}}_{\nu+1}$, $\tilde{\mathcal{E}}_{\nu}$. Thus
\[
|\partial_tJ_{\nu+1}|\leq \tilde{C}_{\nu+1}
\]
where $\tilde{C}_{\nu+1}$ depends only on $\tilde{\mathcal{E}}_{\nu+1}$, $\tilde{\mathcal{E}}_{\nu}$, and initial data. Since $J_{\nu+1}(t)=J_{\nu+1}(0)+\int_0^t\partial_tJ_{\nu+1} d\tau $, we get  
\[
1-\tilde{C}_{\nu+1}T \leq J_{\nu+1} \leq 1+ \tilde{C}_{\nu+1}T
\]
Now since $J_{\nu+1}$ is bounded away from zero for sufficiently small $T$, $D\eta_{\nu+1}$ is invertible and thus $A_{\nu+1}$ is well-defined and the formulation in \eqref{Dn} is also well-defined. To verify the last condition in \eqref{Assume1} for $(\nu+1)^{\text{th}}$, we use \eqref{Dn}. Since $A_{\nu+1}$ and $\partial_t D \eta_{\nu+1}$ are bounded by $\tilde{\mathcal{E}}_{\nu+1}$, $\tilde{\mathcal{E}}_{\nu}$ and since $A_{\nu+1} (t)=A_{\nu+1}(0)+\int_0^t \partial_tA_{\nu+1} d\tau$, 
\[
\|A_{\nu+1}-I\|_\infty \leq \tilde{\tilde{C}}_{\nu+1}T
\] 
and thus for sufficiently small $T$, $C_{\nu+1}\leq 1/8$ can be found. 

We move onto (HP2). This directly follows from the embedding as in Lemma \ref{emb}, since $N$ is sufficiently large 
$N\geq 2 [\alpha]+9$. 

For (HP3), it suffices to show that for each $m+n\leq N$, $\int_\Omega w^{1+\alpha+n} 
J_{\nu+1}^{-1/\alpha} |D_{\eta_{\nu+1}}\partial_\beta^m\partial_3^n \eta_{\nu+1}|^2 dx$ is bounded 
by $\tilde{\mathcal{E}}_{\nu+1}$ and $\tilde{\mathcal{E}}_{\nu}$. Note that 
\[
\begin{split}
&\int_\Omega w^{1+\alpha+n} 
J_{\nu+1}^{-1/\alpha} |D_{\eta_{\nu+1}}\partial_\beta^m\partial_3^n \eta_{\nu+1}|^2 dx \\
&= \int_\Omega w^{1+\alpha+n} 
(\frac{J_{\nu+1}}{J_\nu})^{-1/\alpha} J_{\nu}^{-1/\alpha} |{A_{\nu+1}}^s_r {\eta_{\nu}},_s^k  {A_{\nu}}^s_k\partial_\beta^m\partial_3^n {\eta_{\nu+1}}^i,_s|^2 dx
\end{split}
\]
Because $J_{\nu+1}$ is bounded away from zero and $A_{\nu+1}$ and $D\eta_\nu$ are bounded by 
 $\tilde{\mathcal{E}}_{\nu+1}$ and $\tilde{\mathcal{E}}_{\nu}$, (HP3) at $(\nu+1)^{\text{th}}$ step 
  readily follows. 

\subsection{Convergence of the iteration scheme and uniqueness of solution}\label{5}

In order to prove Theorem \ref{thm-general}, it now remains to show that $\eta_\nu$ and $\partial_t\eta_\nu$ converge, the limit functions solve Euler equations \eqref{eta} and \eqref{deacoustic}, and they are unique. 

First, by applying the Gronwall inequality to the energy inequality obtained in Proposition \ref{prop2}, we can deduce the following Claim: 

\begin{claim}\label{claim1}
Suppose that the initial data $\eta(0,x)=x$ and $\partial_t\eta(0,x)=u_0(x)$ of Euler equations 
\eqref{deacoustic} are given such that $\mathcal{TE}^N(0)\leq B$ for a constant $B>0$. Then 
there exist $T>0$ such that if for all $\iota\leq\nu$, $\tilde{\mathcal{E}}_\iota\leq 3B/2$ for $t\leq T$, then $\tilde{\mathcal{E}}_{\nu+1}\leq 3B/2$ for $t\leq T$ and in addition, for all $\nu$,   $\| A_{\nu}-I\|_\infty \leq 1/8$ and $2/3\leq J_\nu\leq 2$. 
\end{claim}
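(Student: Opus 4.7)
The argument is a bootstrap-continuity argument in $t$ closed by Gronwall-type absorption of the right-hand side of the energy inequality in Proposition \ref{prop2}. Observe first that the initial data \eqref{id} for $G_{\nu+1},\,\partial_t G_{\nu+1},\,H_{\nu+1}$ are $\nu$-independent and explicit in $x,\,w,\,u_0$; plugging them into \eqref{AE} and using $\mathcal{TE}^N(0)\leq B$ together with $Dw\in X^{\alpha,N}$ yields $\tilde{\mathcal{E}}_{\nu+1}(0)\leq C_\ast B$ with $C_\ast$ depending only on $\alpha,N,\lambda$ and $\|Dw\|_{X^{\alpha,N}}$. After the harmless relabeling $B\mapsto C_\ast B$ we may assume $\tilde{\mathcal{E}}_{\nu+1}(0)\leq B$, which is strictly below the bootstrap threshold $3B/2$. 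Now fix $\nu\geq 1$, assume $\tilde{\mathcal{E}}_\iota(t)\leq 3B/2$ on $[0,T]$ for every $\iota\leq \nu$, and define
\[
T_\ast := \sup\{\tau\in[0,T]\,:\,\tilde{\mathcal{E}}_{\nu+1}(t)\leq 3B/2\text{ for all }t\in[0,\tau]\}.
\]
Continuity in $t$ of $\tilde{\mathcal{E}}_{\nu+1}$ (inherent to the regularity class produced by Proposition \ref{prop2}) together with $\tilde{\mathcal{E}}_{\nu+1}(0)\leq B<3B/2$ guarantees $T_\ast>0$.

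Next, on $[0,T_\ast]$ all three energy arguments of $\mathcal{F}_7$ are bounded by $3B/2$, so by continuity of $\mathcal{F}_7$ and the fixed value of $\mathcal{B}^N(u_0)$ there is a constant $M=M(B,\mathcal{B}^N(u_0))$ with $\mathcal{F}_7\leq M$ there. The estimate of Proposition \ref{prop2} then simplifies to
\[
\tilde{\mathcal{E}}_{\nu+1}(t)\leq B + M\sqrt{3B/2}\,t,\qquad t\in[0,T_\ast].
\]
Set $T_0 := B/(2M\sqrt{3B/2})$; then $\tilde{\mathcal{E}}_{\nu+1}(t)\leq 3B/2$ on $[0,\min(T_0,T_\ast)]$ with strict inequality on the open interval, which by continuity precludes $T_\ast<T_0$. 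Hence $T_\ast\geq T_0$ and the bootstrap bound $\tilde{\mathcal{E}}_{\nu+1}\leq 3B/2$ holds on $[0,T_0]$. Choosing the master time $T=T_0$ therefore closes the induction for the energy bound.

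The uniform-in-$\nu$ control $\|A_\nu-I\|_\infty\leq 1/8$ and $2/3\leq J_\nu\leq 2$ is already in the (HP1) discussion just after Proposition \ref{prop2}: Lemma \ref{emb} turns $\tilde{\mathcal{E}}_{\nu+1},\tilde{\mathcal{E}}_\nu\leq 3B/2$ into pointwise control of $|D\eta_{\nu+1}|$ and $|\partial_t D\eta_{\nu+1}|$, and via Jacobi's formula and \eqref{Dn} into $|\partial_t J_{\nu+1}|+|\partial_t A_{\nu+1}|\leq \tilde C(B)$. Integrating from $J_{\nu+1}(0)=1$ and $A_{\nu+1}(0)=I$ and shrinking $T_0$ so that $\tilde C(B)T_0\leq 1/8$ yields the desired bounds for every $\nu$. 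The crucial and only delicate point is the uniformity of $T_0$ in $\nu$: this is exactly what the specific functional form of Proposition \ref{prop2}---dependence of $\mathcal{F}_7$ only on the three most recent energies plus the fixed $\mathcal{B}^N(u_0)$---delivers, since under the induction hypothesis all inputs then lie in a compact set determined by $B$ alone, so $M$ and hence $T_0$ do not drift with $\nu$.
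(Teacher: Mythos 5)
Your proof is correct and follows essentially the same approach the paper intends; the paper itself gives no details beyond ``by applying the Gronwall inequality to the energy inequality obtained in Proposition~\ref{prop2},'' and your bootstrap/continuity argument is the natural way to make that precise. A couple of points worth flagging, both of which are loosenesses inherited from the paper rather than errors on your part. First, the remark about relabeling $B\mapsto C_\ast B$ is needed exactly because the claim as stated compares $\tilde{\mathcal{E}}_\iota$ against $3B/2$ while the hypothesis is phrased for $\mathcal{TE}^N(0)\leq B$; the constant $C_\ast$ coming from passing from $(\eta,v)$ at time $0$ to $(G_{\nu+1},\partial_tG_{\nu+1},H_{\nu+1})$ at time $0$ via \eqref{id} and Lemma~\ref{er} need not be close to $1$, so strictly speaking the threshold $3B/2$ should be read as $C'B$ for a suitable constant — the paper glosses over this, and your acknowledgement of it is the honest reading. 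Second, you correctly identify the only substantive point, namely the uniformity of $T_0$ over $\nu$, and correctly trace it to the fact that $\mathcal{F}_7$ in Proposition~\ref{prop2} depends only on $\tilde{\mathcal{E}}_{\nu+1},\tilde{\mathcal{E}}_\nu,\tilde{\mathcal{E}}_{\nu-1}$ and the fixed $\mathcal{B}^N(u_0)$; under the induction hypothesis all these lie in a fixed ball of size $3B/2$, so $M$ and hence $T_0$ cannot drift. Your treatment of the $\|A_\nu-I\|_\infty\leq 1/8$ and $2/3\leq J_\nu\leq 2$ bounds via Lemma~\ref{emb}, Jacobi's formula, and integration from the identity also matches the (HP1) verification given after Proposition~\ref{prop2}, with the time shrunk once more uniformly in $\nu$. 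No gaps.
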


Thus we get the uniform bound of $\tilde{\mathcal{E}}_{\nu}$ as well as the uniform upper and lower bounds of $J_\nu$. In order to take the limit $\nu\rightarrow \infty$, in view of \eqref{ee}, we define the homogenous energy functional $\overline{\mathcal{E}}_{\nu+1}$ for $G_{\nu+1}$ and $H_{\nu+1}$: 
\begin{equation}\label{hef}
\begin{split}
\overline{\mathcal{E}}_{\nu+1}\equiv \sum_{|m|+n=0}^{N-2}&
\{\frac{1}{2}\int w^{\alpha+n}|\partial_\beta^m\partial_3^n\partial_tG_{\nu+1}|^2dx+
\frac16\int  w^{1+\alpha+n} |\partial_\beta^m\partial_3^nDG_{\nu+1}|^2 dx \\
&+ \frac12 \int w^{1+\alpha+n}  |\partial_\beta^m\partial_3^nH_{\nu+1}|^2dx  \}
\end{split}
\end{equation}
Then due to \eqref{ee} and since   $\| A_{\nu}-I\|_\infty \leq 1/8$ and $2/3\leq J_\nu\leq 2$, $\tilde{\mathcal{E}}_{\nu+1}$ and $\overline{\mathcal{E}}_{\nu+1}$ are equivalent: 
\[
\frac{1}{1+M_\nu}\tilde{\mathcal{E}}_{\nu+1}\leq \overline{\mathcal{E}}_{\nu+1}\leq (1+M_\nu)\tilde{\mathcal{E}}_{\nu+1}
\] 
where $M_\nu$ depends only on $C_\nu$ and $\tilde{\mathcal{E}}_{\nu}$. Now by Claim \ref{claim1}, $\tilde{\mathcal{E}}_{\nu+1}$ and $M_\nu$'s have the uniform bound over $t\leq T$. 
Therefore, there exists a sequence $\nu_\iota$ such that $G_{\nu_\iota}$, $H_{\nu_\iota}$, and $\eta_{\nu_\iota}$ converge strongly  to some $G,H,\eta$.  
Due to the uniform energy bound, we also conclude that $\eta$ and $\partial_t\eta$ solve 
\eqref{eta} and \eqref{deacoustic} with the desired properties.

For uniqueness, let $(\eta, v)$ and $(\overline{\eta},\overline{v})$ be two 
solutions to \eqref{eta} and \eqref{deacoustic} with the same initial boundary conditions having the total energy bounds:  
$\mathcal{TE}^N(\eta,v),$ $\mathcal{TE}^N(\overline{\eta},\overline{v})\leq 2B$.  Define  
$\mathcal{Z}(t)$ by
\[
\begin{split}
\mathcal{Z}(t)&\equiv \frac12\int  w^\alpha |v-\overline{v}|^2 dx+ \alpha \int w^{1+\alpha} 
J^{-1/\alpha-2}|J-\overline{J}|^2 dx\\
&+\sum_{|m|+n=1}^{N-1}\frac12\int w^{\alpha+n}|\partial_\beta^m\partial_3^n( v-\overline{v})|^2dx
+\frac12\int w^{1+\alpha+n}J^{-1/\alpha}|D_\eta\partial_\beta^m\partial_3^n
( \eta-\overline{\eta})|^2dx 
\end{split}
\]
From \eqref{3nm}, the equations for $\eta-\overline{\eta}$ read as 
\[
\begin{split}
 w^{\alpha+n}\partial_\beta^m\partial_3^n(\eta-\overline{\eta})_{tt}^i+
(w^{1+\alpha+n}\,\partial_\beta^m\partial_3^n[A_i^k{J}^{-1/\alpha}-\overline{A}_i^k{\overline{J}}^{-1/\alpha}]),_k
+ w^{\alpha+n} [I^{m,n} (\eta)-I^{m,n}(\overline{\eta})] =0
\end{split}
\]
In performing the energy estimates of $\eta-\overline{\eta}$ as done in the a priori estimates and noting that  $|J^{-1/\alpha}-\overline{J}^{-1/\alpha}|\leq C_{1B}|D\eta-D\overline{\eta}|$ and $|A-\overline{A}| \leq C_{2B}|D\eta-D\overline{\eta}| $, one can obtain  $$\frac{d\mathcal{Z}}{dt}\leq C\mathcal{Z}\text{ for }C \text{ depending only on }B\,.$$ Since $\mathcal{Z}(0)=0$, this immediately yields the uniqueness. 

\subsection{Proof of Lemma \ref{er}}\label{proof-er}
 
Here we give a sketch of proof of Lemma \ref{er}. We refer to \cite{Baouendi67} and \cite{BG69} for 
similar regularity results of degenerate elliptic problems. We introduce the 
spaces  $H = X^{\alpha,0} $  and  $V = X^{\alpha,1} $ defined by 
\begin{equation*}
V = \{ u \in \D'(\Omega) ; \quad w^{\alpha \over 2} u, \,  w^{\alpha \over 2} \partial_\beta u  \in L^2(\Omega), 
\,   w^{\alpha + 1  \over 2} \partial_3 u  \in L^2(\Omega)
     \}.  
\end{equation*}
We also define the following scalar product on $H$, $(u,v)_H = \int_\Omega w^\alpha uv. $
We define on $V$ the following norm 
 \begin{equation*}
\| u  \|_{V}^2  =  \| w^{\alpha \over 2} u  \|^2_{L^2(\Omega)  }  +  
  \sum_{\beta = 1}^2  \| w^{\alpha \over 2}  \partial_\beta u  \|^2_{L^2(\Omega)  }  + 
 \| w^{\alpha + 1  \over 2} \partial_3 u   \|^2_{L^2(\Omega)  } 
\end{equation*}

\begin{lemma} \label{dens}
$\D$ is dense in $V$. 
\end{lemma}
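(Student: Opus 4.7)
The plan is to approximate an arbitrary $u \in V$ by a function in $\mathcal{D}(\Omega)$ via three successive operations: tangential mollification on the torus, a cutoff in $x_3$ that pushes the support away from $\{x_3=0\}\cup\{x_3=1\}$, and finally a full mollification which is harmless because $w$ is nondegenerate on the compact support that remains.

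The first step is immediate: since $\mathbb{T}^2$ has no boundary and the weight $w$ depends only on $x_3$, convolution with a periodic mollifier $\rho_\delta$ in $(x_1,x_2)$ commutes with $\partial_\beta$ and with multiplication by $w$, so $u\ast_{\mathrm{tan}}\rho_\delta \to u$ in $V$ as $\delta\to 0$ by continuity of translations in the weighted spaces $L^2(w^\alpha\,dx)$ and $L^2(w^{\alpha+1}\,dx)$. Hence one may assume $u$ is smooth in $(x_1,x_2)$.

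For the second step, fix $\chi_\epsilon \in C_c^\infty((0,1))$ with $\chi_\epsilon \equiv 1$ on $[2\epsilon,1-2\epsilon]$ and $|\chi_\epsilon'|\lesssim 1/\epsilon$, and set $u_\epsilon = \chi_\epsilon u$. Only the product-rule term $w^{(\alpha+1)/2}\chi_\epsilon' u$ is problematic; all the other contributions converge by dominated convergence. One must bound
\begin{equation*}
\int_{\{\epsilon<x_3<2\epsilon\}\cup\{1-2\epsilon<x_3<1-\epsilon\}}\frac{w^{\alpha+1}}{\epsilon^2}|u|^2\,dx
\end{equation*}
and show it tends to zero. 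This is precisely where the Hardy-type inequalities recorded in Section \ref{new} enter: using $w\sim x_3$ near $x_3=0$ (and symmetrically near $x_3=1$), one trades the singular factor $\epsilon^{-2}$ against the weight $w^{\alpha+1}$ on a strip of width $\epsilon$ and dominates the integral by a tail of $\int w^\alpha|u|^2 + \int w^{\alpha+1}|\partial_3 u|^2$, which vanishes by absolute continuity. Thus $u_\epsilon\to u$ in $V$.

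For the third step, $\mathrm{supp}\,u_\epsilon \subset \mathbb{T}^2\times[\epsilon,1-\epsilon]$, where $w$ is bounded above and below by positive constants, so the $V$-norm is equivalent to the usual $H^1$-norm on this set. A standard mollification $u_\epsilon\ast\rho_\sigma$ with $\sigma<\epsilon/2$ produces elements of $\mathcal{D}(\Omega)$ converging to $u_\epsilon$ in $V$, and a diagonal selection $\sigma(\epsilon)\to 0$ sufficiently fast yields the required approximating sequence. The main obstacle is the second step: without a Hardy-type control, a crude estimate of the problematic integral only gives a bound of order $\epsilon^{\alpha-1}$, which fails for $0<\alpha<1$; the cancellation between the degeneracy of $w$ and the blowup of $\chi_\epsilon'$ must be exploited through Hardy, and this is the only point where the specific structure of the weight and the condition $\alpha>0$ are used.
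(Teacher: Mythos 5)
Your argument is essentially the paper's: cut off near $x_3\in\{0,1\}$, mollify, and use the one-dimensional weighted Hardy inequality to control the $\chi_\epsilon'\,u$ error term (the paper does the cutoff and mollification in a single step, $v^n=\rho_{1/(2n)}*(\chi_n v)$ with $\chi_n$ piecewise linear, and appeals to $\int_\Omega w^{\alpha-1}|v|^2\precsim\int_\Omega w^{\alpha+1}|\partial_3 v|^2$ in exactly the way you describe). One small caveat: the paper only assumes $w\sim d(x)$, not that $w$ depends solely on $x_3$ (it later remarks that the bilinear form $B$ need not be symmetric when $w$ depends on the tangential variables), so your first step should be justified by the comparability $w(\cdot-h)\sim w$ under small tangential shifts $h$ rather than by exact commutation of mollification with multiplication by $w$.
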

The proof is based on an  explicit approximation. We define $\chi_n(t)  = 1$  
 if $\frac2n \leq t \leq 1 - \frac2n $ and $\chi_n (t) = n t - 1   $  if $\frac1n \leq t \leq \frac2n $
and $\chi_n (t) = n-1  - nt  $  if $ 1 - \frac2n \leq t \leq 1- \frac1n  $ and 
 $\chi_n(t) = 0 $  for $ 0\leq t \leq \frac1n  $ or  $ 1 - \frac1n \leq t \leq 1  $.  
 For each $v \in V$, we define $u_n = \chi_n(x_3) v $ and $v^n = \rho_{\frac1{2n}} * u_n $  
the convolution of $u_n$ with the mollifier $ \rho_{\frac1{2n}} = (2n)^3 \rho (\frac{.}{2n}) $  
where $\rho \in C^\infty_0(\R^3)$,   $\rho \geq 0$, $\int \rho = 1 $ and 
supp$(\rho) \in B(0,1)$.   Hence $ v_n \in \D(\Omega)  $ and $v_n $ goes to $v$ in 
$V$ when $n$ goes to infinity. To prove that $v_n$ converges to $v$ in $V$, we have to use the Hardy 
inequality, namely the fact that 
  \begin{equation*}
 \int_\Omega  w^{\alpha - 1 } | v  |^2 dx \,  
  \leq      \int_\Omega  w^{\alpha + 1 } |\partial_3 v  |^2 dx.  \,
\end{equation*} 
This ends the proof of lemma \ref{dens}.

We also define   the following bilinear form 
\begin{equation*}
 B[u,v]     =  \int_\Omega  \lambda   w^{\alpha } u v   +  
  \sum_{\beta = 1}^2    \partial_\beta u  \partial_\beta (  w^{\alpha }     v)     + 
  w^{\alpha + 1 } \partial_3 u   \partial_3 v 
\end{equation*}
where $\lambda$ is big enough. 
Notice that $B$ is not symmetric if $w$ depends on the tangential variable. 
If $\lambda$ is chosen big enough then, $B$ satisfies the hypotheses of 
the Lax-Milgram theorem, namely the fact that 
$$  | B[u,v] |  \leq C \| u\|_V  \| v\|_V, \quad \text{and}   \quad      \frac1C    \| v\|_V^2 \leq    B[v,v] .  $$  
Hence, for each bounded linear functional $f$  on $V$, namely $f : V \to \R$, 
there exists a unique element $u \in V$ such that  $ B[u,v] = f(v)   $ for each $v \in V$. 
One can try to characterize the set $V'$, but we do not need to do it here. 
For any $ G \in H$,  $f(v) =  (v,G )_{H} $ defines a linear functional on $V$ and hence 
there exists a unique $u \in V$ such that $  B[u,v]  = \int_\Omega  w^{\alpha }  G v   $ for 
all $v \in V$. 

Using lemma \ref{dens} and using a density argument, it is easy to see that 
$u$ solves 
\begin{equation} \label{deg-pde}
[ - \partial_1^2 - \partial_2^2 -  w^{-\alpha}\partial_3w^{1+\alpha}\partial_3 + \lambda ] u = G
\end{equation}
if and only if $   B[u,v]  = \int_\Omega  w^{\alpha }  G v   $ for all $v \in V$. 
Hence, $u$ is the unique solution of \eqref{deg-pde}.  
To prove that $u$ is more regular, we use again Lax-Milgram to construct 
 $ U_\beta  $ the solution of 
\eqref{deg-pde} with the right hand side $ G_\beta =   \partial_\beta G + \alpha 
 \frac{\partial_\beta w}w   w^{-\alpha}  \partial_3 [ w^{1+\alpha}\partial_3 u ]  + 
 (1+\alpha)  w^{-\alpha} 
  \partial_3  [ w^{\alpha+1} \frac{\partial_\beta w} w  \partial_3 u  ]    $.    Indeed, one has just 
to observe that  $f_\beta (v) =  (v,  G_\beta  )_{H}=$
\[
\begin{split}
 - ( w^{-\alpha } \partial_\beta  (w^\alpha v), G )_{H} 
   - \alpha ( w^{1/2} \partial_3(\frac{\partial_\beta w}w  v   ) , w^{1/2} \partial_3 u    )_{H}   
   - (1 + \alpha)  (w^{1/2} \partial_3 v , w^{1/2}  \frac{\partial_\beta w}w  \partial_3 u     )_{H}   
\end{split}
\]
defines a linear functional of $V$. Then by uniqueness of the solution to \eqref{deg-pde} 
with the right hand side $G_\beta$, we deduce that $U_\beta = \partial_\beta u $. 
To be more precise, one has to replace the partial derivative with respect to $x_\beta$ 
by difference quotient and then pass to the limit to deduce that 
$U_\beta = \partial_\beta u \in V $ and that $ \| \partial_\beta u \|_V \leq C \| G \|_{H} .     $ 
Using the equation  \eqref{deg-pde}, we also see that 
$ w^{-\alpha}\partial_3w^{1+\alpha}\partial_3  u  \in H $ and hence by Hardy inequality, 
we deduce that 
\begin{equation} \label{hard} \int_\Omega w^{-\alpha - 2 }   | w^{1+\alpha} \partial_3 u   |^2 dx \leq C 
  \int_\Omega w^{-\alpha  }   |\partial_3  w^{1+\alpha} \partial_3 u   |^2 dx  \leq C \| G \|_H.   
\end{equation} 
Using that $ w^{-\alpha}\partial_3w^{1+\alpha}\partial_3  u  = (1+ \alpha ) \partial_3 w \partial_3  u 
  +w \partial_3^2  u   $ and the fact that 
\eqref{hard} yields that $\partial_3 u \in H$, 
 we deduce that $ w \partial_3^2  u  \in H   $. 

We now assume that $G$ is more regular, namely that $G \in X^{\alpha,k}$ for some $k \geq 1$.
 We want to prove 
that $ u \in X^{\alpha,k+2 } $. From the previous argument, we know that    $ u \in X^{\alpha, 2 } $. 
Proving regularity in the tangential direction is very similar to the previous argument. 
Indeed, differentiating \eqref{deg-pde} with respect to $\beta$, we see that 
$\partial_\beta u$ solves \eqref{deg-pde} with the right hand side $ G_\beta $ and due to the 
extra regularity of $G$, we see that $ G_\beta \in H $. Hence, we can apply $  X^{\alpha, 2 } $ 
regularity property to $ \partial_\beta u  $ and deduce that it is in $  X^{\alpha, 2 } $. 
Of course we can repeat this  $k$ times  and deduce that the  tangential derivatives  
$\partial_\beta^k  u \in  X^{\alpha, 2 }  $. It remains to control the normal derivatives. 
Taking one normal derivative, we observe that $\partial_3 u  $ solves  
\begin{equation} \label{deg-pde3}
[ - \partial_1^2 - \partial_2^2 -  w^{-\alpha-1}\partial_3w^{2+\alpha}\partial_3 + \lambda ] \partial_3 u = G_3
\end{equation}
with the right hand side $G_3 = \partial_3 G + (1+\alpha) \partial^2_3 w \partial_3 u  $. 
Moreover, it is clear that $ \partial_3 u  \in H $ and that $G_3 \in H $. 
We introduce two  other Hilbert spaces $H_{\alpha+1}$ and $V_{\alpha+1}$   where 
for each $\gamma > 0$, $H_\gamma$ and $V_\gamma$ are given  by the following norms  
  \begin{equation*} 
\| u  \|_{H_\gamma}^2  =  \| w^{\gamma  \over 2} u  \|^2_{L^2(\Omega)  }  
\end{equation*} 
 \begin{equation*} 
\| u  \|_{V_\gamma}^2  =  \| w^{\gamma  \over 2} u  \|^2_{L^2(\Omega)  }  +  
  \sum_{\beta = 1}^2  \| w^{\gamma \over 2}  \partial_\beta u  \|^2_{L^2(\Omega)  }  + 
 \| w^{\gamma  + 1  \over 2} \partial_3 u   \|^2_{L^2(\Omega)  } .  
\end{equation*}
 In particular, we notice that $H=H_\alpha$ and $V = V_\alpha$.

Now, we can apply the previous regularity argument with $H$ replaced by 
$H_{\alpha+1}$ and    $  X^{\alpha, 2 } $  replaced by    $  X^{\alpha+1, 2 } $.
We can also combine the $x_3$ derivatives with tangential derivatives and 
prove that for all $m$, $0\leq m \leq k-1$, we have 
  $ \partial_\beta^{m}    \partial_3 u  \in X^{\alpha+1,2} $. 

By an induction argument on the number of $x_3$ derivatives, we can finally 
prove that   for all $m$, $0\leq m \leq k-n$, we have 
  $ \partial_\beta^{m}    \partial_3^n  u  \in X^{\alpha+n,2} $.
Hence, we deduce that 
 \begin{equation*} 
\| u  \|_{X^{\alpha,k+2}}  \leq  C \| G   \|_{X^{\alpha,k}}.  
\end{equation*}

\subsection{Duality Argument: solvability for \eqref{approx}}\label{duality}

\begin{proposition}\label{dual} Let $\eta$ and $\partial_t\eta$ be given such that $\mathcal{E}^N(\eta,\partial_t\eta)<\infty$. For $f=f(\eta)$ and $g=g(\eta,\partial_t\eta)$ 
where ${f}\in L^1(0,T;X^{\alpha,0})$, $g\in L^1(0,T; Z^{\alpha,0})$, and $g$ is anti-symmetric, there exists a unique solution $(\partial_tG,G,H)$ on $(0,T)$ to 
the linear system 
\begin{equation}\label{linear}
\begin{cases}
w^\alpha\partial_t^2G+\mathcal{L}^eG+\mathcal{L}^dG+\mathcal{L}^cH=w^\alpha f\\
w^{1+\alpha}J^{-1/\alpha}\partial_tH=w^{1+\alpha}J^{-1/\alpha}g \\
G(t=0)=\partial_tG(t=0)=H(t=0)=0
\end{cases}
\end{equation} 
and the solution satisfies 
\begin{equation}\label{reg1}
\|(\partial_tG,G,H)\|_{C([0,T];X^{\alpha,0}\times Y^{\alpha,0}\times Z^{\alpha,0})}\leq C\|(f,g)\|_{L^1(X^{\alpha,0}\times Z^{\alpha,0})}
\end{equation}
Moreover, if  $f\in X^{\alpha, N-2}$, $g\in Z^{\alpha,N-2}$, then 
\begin{equation}\label{reg2}
\|(\partial_tG,G,H)\|_{C([0,T];X^{\alpha,N-2}\times Y^{\alpha,N-2}\times Z^{\alpha,N-2})}\leq C\|(f,g)\|_{L^1(X^{\alpha,N-2}\times Z^{\alpha,N-2})}
\end{equation}
for some constant $C $ that depends only on $\mathcal{TE}^N(\eta,\partial_t\eta)$. 
\end{proposition}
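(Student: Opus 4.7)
My plan has three stages: reduce the system to a single equation for $G$, produce $G$ by a duality argument, and derive higher regularity by commutation. The second equation is a pure time ODE, so I would set $H(t,x) = \int_0^t g(\tau,x)\, d\tau$; antisymmetry of $g$ transfers to $H$, and the bounds $\|H\|_{C([0,T]; Z^{\alpha,b})} \leq \|g\|_{L^1(0,T; Z^{\alpha,b})}$ for $b=0$ and $b=N-2$ are immediate. Substituting back, the remaining problem is the scalar degenerate hyperbolic equation
$$w^\alpha \partial_t^2 G + (\mathcal{L}^e + \mathcal{L}^d) G = w^\alpha f - \mathcal{L}^c H \;=:\; F$$
with zero initial data. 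The crucial structural observation is that $\mathcal{L}^e + \mathcal{L}^d$ is symmetric and non-negative in the weighted $L^2$-pairing: integration by parts (using the Piola identity \eqref{Piola} and the vanishing of $w^{1+\alpha}$ on $\partial\Omega$) yields
$$\bigl\langle (\mathcal{L}^e + \mathcal{L}^d) G, G \bigr\rangle = \int_\Omega w^{1+\alpha} J^{-1/\alpha}\Bigl(|D_\eta G|^2 + \tfrac{1}{\alpha}|\text{div}_\eta G|^2\Bigr) dx,$$
which together with $\tfrac{1}{2}\int w^\alpha |\partial_t G|^2$ reproduces precisely the curl-free part of $\tilde{\mathcal{E}}_{\nu+1}$ in \eqref{AE}. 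All $\eta$-dependent coefficients $A, J, J^{-1/\alpha}, \partial_t A, \partial_t J$ are controlled uniformly by $\mathcal{TE}^N(\eta,\partial_t\eta)$ via Lemma \ref{emb}, so the reduced problem is a linear wave-type equation with bounded coefficients acting on the degenerate elliptic structure of Lemma \ref{er}.

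To produce $G$ with \eqref{reg1}, I would follow the duality argument of \cite{JM}. Given smooth $\Psi$, consider the backward adjoint problem
$$w^\alpha \partial_t^2 \phi + (\mathcal{L}^e + \mathcal{L}^d)\phi = w^\alpha \Psi, \qquad \phi(T) = \partial_t\phi(T) = 0,$$
solved by Galerkin approximation in a basis provided by the weighted elliptic operator of Lemma \ref{er}. Testing against $\partial_t\phi$ and applying Gronwall yields a uniform bound $\sup_t\{\|\partial_t\phi\|_{X^{\alpha,0}} + \|\phi\|_{Y^{\alpha,0}}\} \leq C\|\Psi\|_{L^1_t X^{\alpha,0}}$ with $C = C(\mathcal{TE}^N)$. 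The linear functional $\ell(\Psi) := \int_0^T\!\int_\Omega F\phi\, dx\, dt$ on the range of the adjoint map is then bounded by $C\|(f,g)\|_{L^1(X^{\alpha,0}\times Z^{\alpha,0})}\|\Psi\|_{L^1_t X^{\alpha,0}}$, where the curl contribution is estimated after one integration by parts as $|\int \mathcal{L}^c H \cdot \phi\, dx| \leq C\|H\|_{Z^{\alpha,0}}\|\phi\|_{Y^{\alpha,0}}$. Hahn-Banach extension and Riesz representation produce a distributional $G$; a standard regularization argument upgrades it to $G \in C([0,T]; Y^{\alpha,0})$, $\partial_t G \in C([0,T]; X^{\alpha,0})$. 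Uniqueness follows by applying the same energy identity to the zero-data problem, which forces $G \equiv 0$ and hence $H \equiv 0$.

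For the higher regularity \eqref{reg2} I would apply $\partial_\beta^m\partial_3^n$ to the reduced equation and use the identities \eqref{structure1}--\eqref{highD} exactly as in Lemma \ref{lemma1}: tangential derivatives preserve the structure, while each normal derivative shifts the weights $w^\alpha \to w^{\alpha+n}$ and $w^{1+\alpha} \to w^{1+\alpha+n}$. Testing against $\partial_t\partial_\beta^m\partial_3^n G$, summing over $|m|+n\leq N-2$, and controlling the commutators involving $A,J$ and their derivatives by $\mathcal{TE}^N(\eta,\partial_t\eta)$ through Lemma \ref{emb}, I obtain an energy differential inequality which Gronwall closes against $\|F\|_{X^{\alpha,N-2}} \leq C(\|f\|_{X^{\alpha,N-2}} + \|g\|_{L^1_t Z^{\alpha,N-2}})$. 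The principal obstacle is that the curl forcing $\mathcal{L}^c H$ has no definite sign, so it must be treated as a genuine source rather than absorbed into a coupled $(G,H)$-energy, forcing the auxiliary bound $\|H\|_{Z^{\alpha,N-2}} \leq \|g\|_{L^1_t Z^{\alpha,N-2}}$ to be tracked through every step of the duality and the commutator analysis. A secondary delicate point is verifying that the boundary contributions from the repeated integrations by parts in the duality argument actually vanish in the weighted setting, for which the Hardy inequality underlying Lemma \ref{dens} is indispensable.
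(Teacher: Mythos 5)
Your proposal is correct, and the core duality-and-commutation machinery matches the paper's, but you take one step the paper does not: you decouple $H$ from $G$ at the outset. Since the second equation of \eqref{linear} is a trivial time ODE with given source $g$, you solve it explicitly as $H(t)=\int_0^t g\,d\tau$, obtain $\|H\|_{C([0,T];Z^{\alpha,b})}\leq\|g\|_{L^1Z^{\alpha,b}}$ by Minkowski, and then treat $\mathcal{L}^c H$ as a known forcing in a scalar degenerate wave equation for $G$. The paper instead keeps the pair $(G,H)$ formally coupled: it defines a joint operator $\mathcal{V}$ on $\binom{G}{H}$, computes its adjoint $\mathcal{V}^*$, runs the energy estimate on $\mathcal{V}^*$ using the full cross-term energy $\tilde{\mathcal{E}}$ of \eqref{AE} (the one containing $-\tfrac12\int w^{1+\alpha+n}J^{-1/\alpha}\,\mathrm{Curl}_\eta G:H$, absorbed by Cauchy--Schwarz via \eqref{ee}), and then extends the inverse of $\mathcal{V}^*$ by Hahn--Banach to obtain the solution as $S^*$ of the data. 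What the paper's formulation buys is direct compatibility with the coupled energy functional $\tilde{\mathcal{E}}_{\nu+1}$ used in Proposition \ref{prop2} and with the nonlinear estimates of Section \ref{3}; what your decoupling buys is a genuinely simpler argument, since the adjoint you need is just the $\phi$-wave operator $w^\alpha\partial_t^2+\mathcal{L}^e+\mathcal{L}^d$ with no $\psi$ component, no coupled adjoint $\mathcal{V}^*$, and no sign-indefinite cross-term to absorb at the energy level. Both routes rely on the same two pillars you identify: the symmetry and positivity of $\mathcal{L}^e+\mathcal{L}^d$ via the Piola identity and boundary-weight vanishing, and the Hardy/density machinery (Lemma \ref{dens}) to justify the weighted integrations by parts. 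One cosmetic deviation: you produce adjoint solutions by Galerkin in the eigenbasis of Lemma \ref{er}, whereas the paper never constructs them, instead bounding $(\mathcal{V}^*)^{-1}$ on the range $\mathcal{V}^*(\mathcal{A})$ and extending by Hahn--Banach without ever inverting; either device suffices, and your higher-regularity step (commutation with $\partial_\beta^m\partial_3^n$, weight shift $w^\alpha\to w^{\alpha+n}$, Gronwall) is consistent with the method of Lemma \ref{lemma1} and the paper's remark that \eqref{reg2} follows by induction, duality and density as in \cite{JM}.
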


Let $\mathcal{A}$ denote the set 
\[
\mathcal{A}=\left\{
\left(\begin{array}{c} 
\phi\\
\psi 
\end{array}
\right) \in C^\infty((0,\infty)\times \Omega)\; \text{ such that } \;
(\phi,\partial_t\phi,\psi)_{t=T}=0  \right\}
\]
Hence, $(\partial_tG,G,H)$ solves \eqref{linear} on a time interval $(0,T)$ if and only if for each test function $(\phi,\psi)\in\mathcal{A}$,  we have 
\begin{equation}
\begin{split}\label{weakf}
\int_0^T\int  G \cdot ( w^\alpha\partial_t^2 \phi +\mathcal{L}^e{\phi}+\mathcal{L}^d{\phi})  
-\frac12 w^{1+\alpha}J^{-1/\alpha} H:\text{Curl}_\eta {\phi}  \,dx dt = \int_0^T\int  w^\alpha f \phi\, dx dt\\
\int_0^T\int 4 w^{1+\alpha}J^{-1/\alpha} H : ( \partial_t \psi -\partial_t(J^{-1/\alpha}) \psi)   dxdt =\int_0^T\int -4 w^{1+\alpha}J^{-1/\alpha} g \psi dxdt
\end{split}
\end{equation}
We denote 
\[
\mathcal{V}\binom{G}{H}=\binom{w^\alpha\partial_t^2G+\mathcal{L}^eG+\mathcal{L}^dG+\mathcal{L}^cH}
{-w^{1+\alpha}J^{-1/\alpha}\partial_tH}
\]
defined on the core 
\[
\left\{ \binom{G}{H} \Big| \partial_t^2G\in L_t^2X^{\alpha,0}, \partial_tH \in L_t^2Z^{\alpha,0}, G\in L_t^2(\mathcal{D}(\mathcal{L}^e)) \right\}
\]
Hence $\mathcal{V}$ can be  extended uniquely to a closed operator. Moreover, $\mathcal{A}\subset \mathcal{D}\mathcal{V}^\ast$, the dual of $\mathcal{V}$, and 
\[
\mathcal{V}^\ast\binom{\phi}{\psi}=\binom{w^\alpha\partial_t^2\phi+\mathcal{L}^e\phi+\mathcal{L}^d\phi}
{ w^{1+\alpha}J^{-1/\alpha}[4\partial_t\psi -4\partial_t(J^{-1/\alpha})\psi-\frac{1}{2}\text{Curl}_\eta\phi ]}
\]
Therefore, \eqref{weakf} holds for each $(\phi,\psi)\in \mathcal{A}$ if and only if for each $(\phi,\psi)\in \mathcal{A}$, we have 
\begin{equation}\label{equif}
\int_0^T\int \binom{G}{H}\cdot \mathcal{V}^\ast \binom{\phi}{\psi}=\int_0^T\int 
\binom{w^\alpha f}{-4w^{1+\alpha}J^{-1/\alpha}g}\cdot\binom{\phi}{\psi}
\end{equation}

We take $\binom{\phi}{\psi}\in A$ and denote 
$$\mathcal{V}^\ast\binom{\phi}{\psi}=\binom{\Phi}{\Psi}$$ The energy estimates 
for $\mathcal{V}^\ast$ with \eqref{AE} and \eqref{ee} yield that 
\[
\sup_{0\leq t\leq T}\{\frac12\|\partial_t\phi\|^2_{X^{\alpha,0}}+\frac16\|\phi\|^2_{Y^{\alpha,0}}+\frac12 
\|\psi\|^2_{Z^{\alpha,0}}\} \leq C\int_0^T\|\Phi\|_{{X^{\alpha,0}}^\ast}^2+\|\Psi\|_{{Z^{\alpha,0}}^\ast}^2 dt
\]
where ${X^{\alpha,0}}^\ast$ and ${Z^{\alpha,0}}^\ast$ denote the dual spaces of $X^{\alpha,0}$ and $Z^{\alpha,0}$. 
Thus the operator $\mathcal{V}^\ast$ defines a bijection between $\mathcal{A}$ and $\mathcal{V}^\ast(\mathcal{A})$. Let $S_0$ be its inverse. Hence 
\[
S_0:\mathcal{V}^\ast(\mathcal{A})\rightarrow A \text{ given by }
\binom{\Phi}{\Psi}|\rightarrow \binom{\phi}{\psi}
\]
and we have 
\[
\|(\partial_t\phi,\phi,\psi)\|_{C([0,T];X^{\alpha,0}\times Y^{\alpha,0}\times Z^{\alpha,0})}\leq 
C \|\binom{\Phi}{\Psi}\|_{L^1({X^{\alpha,0}}^\ast\times {Z^{\alpha,0}}^\ast)}
\]
We extend this operator by density to $\overline{\mathcal{V}^\ast(\mathcal{A})}^{L^1({X^{\alpha,0}}^\ast\times {Z^{\alpha,0}}^\ast)}$ and to $L^1 ({X^{\alpha,0}}^\ast\times {Z^{\alpha,0}}^\ast)$ by Hahn-Banach. We denote this extension by $S$: 
\[
S:L^1 ({X^{\alpha,0}}^\ast\times {Z^{\alpha,0}}^\ast) \rightarrow C([0,T];X^{\alpha,0}\times Y^{\alpha,0}\times Z^{\alpha,0}) \;,\quad \binom{\Phi}{\Psi}|\rightarrow \binom{\phi}{\psi}
\]

Now we want to solve \eqref{linear}, namely $\mathcal{V}\binom{G}{H}= \binom{w^\alpha f}{w^{1+\alpha}J^{-1/\alpha}g}$ with $G(t=0)=\partial_t G(t=0)=H(t=0)=0$. This is equivalent to requiring that  \eqref{equif} holds for each $(\phi,\psi)\in\mathcal{A}$. Hence, it is enough to show that for all 
$\binom{\Phi}{\Psi}\in L^1({{X^{\alpha,0}}^\ast\times {Z^{\alpha,0}}^\ast})$, we have 
\begin{equation*}
\int_0^T\int \binom{G}{H}\cdot \binom{ \Phi}{\Psi}=\int_0^T\int 
\binom{w^\alpha f}{-4w^{1+\alpha}J^{-1/\alpha}g}\cdot S\binom{\Phi}{\Psi}
\end{equation*}
Therefore, it is enough to take $\binom{G}{H}=S^\ast \binom{w^\alpha f}{-4w^{1+\alpha}J^{-1/\alpha}g}$ where $S^\ast$ is the dual of $S$, which satisfies 
\[
S^\ast : \mathcal{M}(0,T;{X^{\alpha,0}}^\ast\times {Z^{\alpha,0}}^\ast)\rightarrow L^\infty(0,T;X^{\alpha,0}\times Y^{\alpha,0}\times Z^{\alpha,0})
\]
and thus $\|(\partial_tG,G,H)\|_{L^\infty(0,T;X^{\alpha,0}\times Y^{\alpha,0}\times Z^{\alpha,0})}
\leq C\|(f,g)\|_{L^1(X^{\alpha,0}\times Z^{\alpha,0})}$. At this stage we do not know whether 
$(G,H)$ is continuous in \eqref{reg1}. This actually follows from the regularity and the density argument. 

The uniqueness of $(G,H)$ will follows from the fact that if $f=g=0$ in \eqref{linear}, then $(0,0)$ is the only solution to \eqref{linear} in $L^{\infty}((0,T;X^{\alpha,0}\times Y^{\alpha,0}\times Z^{\alpha,0})$. To prove it, consider a solution $(\partial_tG,G,H)\in L^{\infty}((0,T;X^{\alpha,0}\times Y^{\alpha,0}\times Z^{\alpha,0})$ to \eqref{linear} with $f=g=0$. We will also make use of the duality argument. Indeed, as changing the roles of $\mathcal{V}$ and $\mathcal{V}^\ast$ and arguing as above, we can prove the existence of a solution $(\phi,\psi)$ to the dual problem 
\begin{equation}\label{dualp}
\begin{cases}
w^\alpha\partial_t^2\phi+\mathcal{L}^e\phi+\mathcal{L}^d\phi = \Phi\\
w^{1+\alpha}J^{-1/\alpha}[4\partial_t\psi -4\partial_t(J^{-1/\alpha})\psi 
-\frac12\text{Curl}_\eta\phi]=\Psi \\
\phi(t=0)=\partial_t\phi(t=0)=\psi(t=0)=0
\end{cases}
\end{equation}
for each $(\Phi,\Psi)\in L^1({X^{\alpha,0}}^\ast\times {Z^{\alpha,0}}^\ast)$. Then for each $(\Phi,\Psi)\in L^1({X^{\alpha,0}}^\ast\times {Z^{\alpha,0}}^\ast)$, we consider $(\phi,\psi)$ a solution to \eqref{dualp}. Hence, we 
can write \eqref{equif} with the solution $(\phi,\psi)$. This yields 
\[
\int_0^T \int \binom{G}{H}\cdot \binom{\Phi}{\Psi} dxdt =0
\] 
which implies $G=H=0$. 

The proof of \eqref{reg2} is an easy modification of the above argument based on induction, duality and density arguments as done in \cite{JM}. The tangential regularity can be obtained in the same way. The only difference for the normal regularity is that the degeneracy of the linear operators $\mathcal{L}^e,\mathcal{L}^d$ in \eqref{linear} will change  according to the number of $\partial_3$ for $G$ and thus the appropriate function space is $X^{\alpha,k}$ as in the same spirit of  Section \ref{proof-er}.  We omit the details.

\subsection{Formal derivation of the equation for $G$ and $H=\text{Curl}_\eta G$}\label{GH}
 
 We first derive the equation for $F\equiv w^{-\alpha}\partial_3(w^{1+\alpha}\partial_3\eta)=w\partial_3^2\eta +(1+\alpha)\partial_3w\,\partial_3\eta$. From \eqref{3n}, we obtain 
\[
\begin{split}
& \partial_t^2F^i+\underline{\frac{1}{w^{1+\alpha}}\left(w^{2+\alpha}
\underline{\{w\partial_3^2[A_i^kJ^{-1/\alpha}]
+(1+\alpha)\partial_3w\,\partial_3[A_i^kJ^{-1/\alpha}]\}}_{(\star\star)}\right),_k}_{(\star)}\\
& - (1+\alpha)w \,\partial_3 w,_k \partial_3[A_i^kJ^{-1/\alpha}] + w \,I^{0,2} +(1+\alpha)\partial_3w\, I^{0,1}=0
\end{split}
\]
First we look at $(\star\star)$
\[
\begin{split}
 &w\partial_3^2[A_i^kJ^{-1/\alpha}]
+(1+\alpha)\partial_3w\,\partial_3[A_i^kJ^{-1/\alpha}] \\
&= -\{J^{-1/\alpha}A^k_rA^s_i+\tfrac1\alpha J^{-1/\alpha}A^k_iA^s_r\}\{w\partial_3^2\eta^r,_s+(1+\alpha)\partial_3w\,
\partial_3\eta^r,_s\} \\
&\quad
-w\partial_3\{J^{-1/\alpha}A^k_rA^s_i+\tfrac1\alpha J^{-1/\alpha}A^k_iA^s_r\}\cdot \partial_3\eta^r,_s\\
&=-\{J^{-1/\alpha}A^k_rA^s_i+\tfrac1\alpha J^{-1/\alpha}A^k_iA^s_r\} F^r,_s\\
&\quad+ 
\{J^{-1/\alpha}A^k_rA^s_i+\tfrac1\alpha J^{-1/\alpha}A^k_iA^s_r\}\{w,_s\partial_3^2\eta^r
+(1+\alpha)\partial_3w,_s\partial_3\eta^r\}\\
&\quad
-w\partial_3\{J^{-1/\alpha}A^k_rA^s_i+\tfrac1\alpha J^{-1/\alpha}A^k_iA^s_r\}\cdot \partial_3\eta^r,_s
\end{split}
\]
Now we rewrite $(\star)$ as follows: 
\[
 \begin{split}
  (\star)&=\frac{1}{w^{1+\alpha}}\left(w \cdot w^{1+\alpha}\{w\partial_3^2[A_i^kJ^{-1/\alpha}]
+(1+\alpha)\partial_3w\,\partial_3[A_i^kJ^{-1/\alpha}]\}\right),_k\\
&=\frac{1}{w^{\alpha}}\left( w^{1+\alpha}\{w\partial_3^2[A_i^kJ^{-1/\alpha}]
+(1+\alpha)\partial_3w\,\partial_3[A_i^kJ^{-1/\alpha}]\}\right),_k \\
&\quad + w,_k \{w\partial_3^2[A_i^kJ^{-1/\alpha}] 
+(1+\alpha)\partial_3w\,\partial_3[A_i^kJ^{-1/\alpha}]\}\\
&=-\frac{1}{w^{\alpha}}\left( w^{1+\alpha}\{J^{-1/\alpha}A^k_rA^s_i+\tfrac1\alpha J^{-1/\alpha}A^k_iA^s_r\} F^r,_s\right),_k \\
&\quad+\underline{\frac{1}{w^{\alpha}}\left( w^{1+\alpha}\{J^{-1/\alpha}A^k_rA^s_i+\tfrac1\alpha J^{-1/\alpha}A^k_iA^s_r\}\{w,_s\partial_3^2\eta^r+(1+\alpha)\partial_3w,_s\partial_3\eta^r\}\right),_k}_{(\ast)}\\
&\quad +\underline{w,_k \{w\partial_3^2[A_i^kJ^{-1/\alpha}] 
+(1+\alpha)\partial_3w\,\partial_3[A_i^kJ^{-1/\alpha}]\}}_{(\ast\ast)}\\
&\quad-\frac{1}{w^{\alpha}}( w^{2+\alpha}\partial_3\{J^{-1/\alpha}A^k_rA^s_i+\tfrac1\alpha J^{-1/\alpha}A^k_iA^s_r\}\cdot \partial_3\eta^r,_s),_k \\
 \end{split}
\]
We now show that the undesirable terms $w\partial_3^3\eta$ and 
$\partial_3^2\eta$ cancel out in $(\ast)+(\ast\ast)$ and thus the dominant terms are of $w\partial_3^2\eta,_\sigma$ and 
$\partial_3\eta,_\sigma$. Indeed, $(\ast)+(\ast\ast)$ can be written as follows:
\[
 \begin{split}
  &(\ast)+(\ast\ast)\\
&={w^{-\alpha}}\left( w^{1+\alpha}\{J^{-1/\alpha}A^\kappa_rA^s_i+\tfrac1\alpha J^{-1/\alpha}A^\kappa_iA^s_r\}\{w,_s\partial_3^2\eta^r+(1+\alpha)\partial_3w,_s\partial_3\eta^r\}\right),_\kappa\\
&+ wJ^{-1/\alpha} \{ [A^3_rA^\sigma_i+\tfrac{1}{\alpha}A^3_i A^\sigma_r]w,_\sigma\partial_3^3\eta^r- [A^\kappa_rA^s_i+\tfrac{1}{\alpha}A^\kappa_i A^s_r]w,_\kappa\partial_3^2\eta^r,_s- [A^k_rA^\sigma_i+\tfrac{1}{\alpha}A^k_i A^\sigma_r]\\
&\quad\;\cdot w,_k\partial_3^2\eta^r,_\sigma+  [A^3_rA^s_i+\tfrac{1}{\alpha}A^3_i A^s_r] [(2+\alpha) \partial_3w,_s\partial_3^2\eta^r+(1+\alpha)\partial_3^2w,_s\partial_3\eta^r]\}\\
& +(1+\alpha)w,_3J^{-1/\alpha}\{[A^3_rA^\sigma_i+\tfrac{1}{\alpha}A^3_i A^\sigma_r]w,_\sigma\partial_3^2\eta^r- [A^\kappa_rA^s_i+\tfrac{1}{\alpha}A^\kappa_i A^s_r]w,_\kappa\partial_3\eta^r,_s\\
&\quad- [A^k_rA^\sigma_i+\tfrac{1}{\alpha}A^k_i A^\sigma_r]w,_k\partial_3\eta^r,_\sigma 
+ (1+\alpha) [A^3_rA^s_i+\tfrac{1}{\alpha}A^3_i A^s_r]  \partial_3w,_s\partial_3\eta^r \} \\
&+ w \{ \partial_3[J^{-1/\alpha}A^3_rA^\sigma_i+\tfrac{1}{\alpha}J^{-1/\alpha}A^3_i A^\sigma_r]w,_\sigma\partial_3^2\eta^r-\partial_3 [J^{-1/\alpha}A^\kappa_rA^s_i+\tfrac{1}{\alpha}J^{-1/\alpha}A^\kappa_i A^s_r]w,_\kappa\partial_3\eta^r,_s\\
&-\partial_3[J^{-\frac{1}{\alpha}}A^k_rA^\sigma_i+\tfrac{1}{\alpha}J^{-\frac{1}{\alpha}}A^k_i A^\sigma_r]w,_k\partial_3\eta^r,_\sigma +(1+\alpha)\partial_3 [J^{-\frac{1}{\alpha}}A^3_rA^s_i+\tfrac{1}{\alpha}J^{-\frac{1}
{\alpha}}A^3_i A^s_r] \partial_3w,_s\partial_3\eta^r \}
\end{split}
\]
Therefore, the equation for $F$ reads as follows. 
\begin{equation}\label{F}
 \begin{split}
 & \partial_t^2 F^i -w^{-\alpha} ( w^{1+\alpha}\{J^{-1/\alpha}A^k_rA^s_i+\tfrac1\alpha 
J^{-1/\alpha}A^k_iA^s_r\} F^r,_s),_k \\
&-{w^{-\alpha}}( w^{2+\alpha}\partial_3\{J^{-1/\alpha}A^k_rA^s_i+\tfrac1\alpha J^{-1/\alpha}A^k_iA^s_r\}\cdot \partial_3\eta^r,_s),_k\\
&+{w^{-\alpha}}\left( w^{1+\alpha}\{J^{-1/\alpha}A^\kappa_rA^s_i+\tfrac1\alpha J^{-1/\alpha}A^\kappa_iA^s_r\}\{w,_s\partial_3^2\eta^r+(1+\alpha)\partial_3w,_s\partial_3\eta^r\}\right),_\kappa\\
&+ wJ^{-1/\alpha} \{ [A^3_rA^\sigma_i+\tfrac{1}{\alpha}A^3_i A^\sigma_r]w,_\sigma\partial_3^3\eta^r- [A^\kappa_rA^s_i+\tfrac{1}{\alpha}A^\kappa_i A^s_r]w,_\kappa\partial_3^2\eta^r,_s- [A^k_rA^\sigma_i+\tfrac{1}{\alpha}A^k_i A^\sigma_r]\\
&\quad\;\cdot w,_k\partial_3^2\eta^r,_\sigma+  [A^3_rA^s_i+\tfrac{1}{\alpha}A^3_i A^s_r] [(2+\alpha) \partial_3w,_s\partial_3^2\eta^r+(1+\alpha)\partial_3^2w,_s\partial_3\eta^r]\}\\
& +(1+\alpha)w,_3J^{-1/\alpha}\{[A^3_rA^\sigma_i+\tfrac{1}{\alpha}A^3_i A^\sigma_r]w,_\sigma\partial_3^2\eta^r- [A^\kappa_rA^s_i+\tfrac{1}{\alpha}A^\kappa_i A^s_r]w,_\kappa\partial_3\eta^r,_s\\
&\quad- [A^k_rA^\sigma_i+\tfrac{1}{\alpha}A^k_i A^\sigma_r]w,_k\partial_3\eta^r,_\sigma 
+ (1+\alpha) [A^3_rA^s_i+\tfrac{1}{\alpha}A^3_i A^s_r]  \partial_3w,_s\partial_3\eta^r \} \\
&+ w \{ \partial_3[J^{-1/\alpha}A^3_rA^\sigma_i+\tfrac{1}{\alpha}J^{-1/\alpha}A^3_i A^\sigma_r]w,_\sigma\partial_3^2\eta^r-\partial_3 [J^{-1/\alpha}A^\kappa_rA^s_i+\tfrac{1}{\alpha}J^{-1/\alpha}A^\kappa_i A^s_r]\\
&\quad\;\cdot w,_\kappa\partial_3\eta^r,_s
-\partial_3[J^{-\frac{1}{\alpha}}A^k_rA^\sigma_i+\tfrac{1}{\alpha}J^{-\frac{1}{\alpha}}A^k_i A^\sigma_r]w,_k\partial_3\eta^r,_\sigma \\
&\quad \;+(1+\alpha)\partial_3 [J^{-\frac{1}{\alpha}}A^3_rA^s_i+\tfrac{1}{\alpha}J^{-\frac{1}
{\alpha}}A^3_i A^s_r] \partial_3w,_s\partial_3\eta^r \}\\
&- (1+\alpha)w \,\partial_3 w,_k \partial_3[A_i^kJ^{-1/\alpha}] + w \,I^{0,2} +(1+\alpha)\partial_3w\, I^{0,1}=0
 \end{split}
\end{equation}
The first line is the main part, the second line has full derivative but with the desirable weight $w^2$,  the rest of lines are either of  lower order with respect to $\partial_3$ with appropriate weights. We denote the second line through the last line by $R^i$. Next by using the fact $A^k_i=\delta^k_sA^s_i=A^k_r\eta^r,_sA^s_i$, we write the equation \eqref{deacoustic} as follows:  
\begin{equation}\label{etatt}
\begin{split}
 \eta_{tt}^i -w^{-\alpha} (w^{1+\alpha} J^{-1/\alpha}A^k_rA^s_i\eta^r,_s),_k-\tfrac{1}{\alpha}
 w^{-\alpha}  (w^{1+\alpha} J^{-1/\alpha}A^k_iA^s_r\eta^r,_s),_k &\\
 +(2+\tfrac{3}{\alpha})w^{-\alpha}(w^{1+\alpha}J^{-1/\alpha} A^k_i),_k&=0
\end{split}
\end{equation}
Now combining  \eqref{F} and \eqref{etatt} with the equation for $[\partial_1^2+\partial_2^2]\eta$ in 
\eqref{3nm}, the equation for $G= [-\partial_1^2-\partial_2^2-w^{-\alpha}\partial_3w^{1+\alpha}\partial_3+\lambda]\eta$ reads as follows: 
\begin{equation}
\begin{split}\label{G}
&\partial_t^2 G^i - w^{-\alpha} ( w^{1+\alpha}\{J^{-1/\alpha}A^k_rA^s_i+\tfrac1\alpha 
J^{-1/\alpha}A^k_iA^s_r\} G^r,_s),_k \\
&=-w^{-\alpha} (w^{1+\alpha}\partial_\beta[{J}^{-1/\alpha}{A}^k_r{A}^s_i+\tfrac1\alpha {J}^{-1/\alpha}{A}^k_i{A}^s_r]\cdot \partial_\beta{\eta}^r,_s),_k + I^{2,0,0}\\
&\quad+R^i -\lambda (2+\tfrac{3}{\alpha})w^{-\alpha}(w^{1+\alpha}J^{-1/\alpha} A^k_i),_k  \equiv \mathcal{R}^i
\end{split}
\end{equation}
We note that $\mathcal{R}^i$ consists of essentially lower-order terms and in particular, 
\[
\|\mathcal{R}^i\|_{X^{\alpha,N-2}}<\infty \;\text{ if }\;\mathcal{E}^N(\eta,\partial_t\eta)<\infty\text{ and }\|Dw\|_{X^{\alpha,N}}<\infty \,.
\]
The equation for $H=\text{Curl}_\eta G$ can be derived in the same way from the curl equation  \eqref{curleta0}. 
\begin{equation}
\begin{split}\label{H}
&\partial_t H =[-\partial_\beta^2-w\partial_3^2-(2+\alpha)\partial_3w\,\partial_3+\lambda][\text{Curl} u_0]^k_j\\
&-\partial_t{A}^s_j[\partial_\beta^2+w\partial_3^2+(2+\alpha)\partial_3w\partial_3+\lambda]
{\eta}^k,_s-\partial_t{A}^s_k[\partial_\beta^2+w\partial_3^2+(2+\alpha)\partial_3w\partial_3+\lambda]
{\eta}^j,_s\\
&+\sum_{p=1}^2(\partial_\beta^p{A}^s_j
\cdot\partial_t\partial_\beta^{2-p}{\eta}^k,_s+w\partial_3^p{A}^s_j\cdot \partial_t
\partial_3^{2-p}{\eta}^k,_s\\
&\quad\quad\;\;- \partial_\beta^p{A}^s_k
\cdot\partial_t\partial_\beta^{2-p}{\eta}^j,_s-w\partial_3^p{A}^s_k\cdot \partial_t
\partial_3^{2-p}{\eta}^j,_s) \\
&+(2+\alpha)\partial_3w(\partial_3{A}^s_j\cdot\partial_t 
{\eta}^k,_s-\partial_3{A}^s_k\cdot\partial_t 
{\eta}^j,_s) +\lambda\int_0^t (\partial_tA^s_j\cdot\partial_t\eta^k,_s- \partial_tA^s_k\cdot \partial_t\eta^j,_s) d\tau \\
&-\sum_{p=0}^2\int_0^t(\partial_t\partial_\beta^p{A}^s_j
\cdot\partial_t\partial_\beta^{2-p}{\eta}^k,_s+w\partial_t\partial_3^p{A}^s_j\cdot \partial_t
\partial_3^{2-p}{\eta}^k,_s\\
&\quad\quad\quad\;\;\; -\partial_t\partial_\beta^p{A}^s_k
\cdot\partial_t\partial_\beta^{2-p}{\eta}^j,_s-w\partial_t\partial_3^p{A}^s_k\cdot \partial_t
\partial_3^{2-p}{\eta}^j,_s)d\tau\\
&-\sum_{p=0}^1
 (2+\alpha)\partial_3w\int_0^t (\partial_t\partial_3^p{ A}^s_j\cdot
 \partial_t\partial_3^{1-p}{\eta}^k,_s -\partial_t\partial_3^p{ A}^s_k\cdot
 \partial_t\partial_3^{1-p}{\eta}^j,_s )d\tau \\
 &+\partial_3 w\partial_t ({A}^\sigma_j\cdot\partial_3{\eta}^k,_\sigma-{A}^\sigma_k\cdot\partial_3{\eta}^j,_\sigma) -\partial_t( w,_\sigma A^\sigma_j\cdot \partial_3^2\eta^k- w,_\sigma A^\sigma_k\cdot \partial_3^2\eta^j) \\
 &- (2+\alpha)\partial_t(\partial_3w,_s {A}^s_j\cdot\partial_3{\eta}^k-
\partial_3w,_s {A}^s_k\cdot\partial_3{\eta}^j)\equiv [\mathcal{S}]^k_j
\end{split}
\end{equation}
We note that 
\[
\|\mathcal{S}\|_{Z^{\alpha,N-2}}<\infty \;\text{ if }\;\mathcal{E}^N(\eta,\partial_t\eta)<\infty \text{ and }\|Dw\|_{X^{\alpha,N}}<\infty \,.
\]

\section{General smooth initial domain $\Omega$}\label{genera-domain}

Here we would like to discuss the changes to be made to the argument to prove 
theorem \ref{thm-general} for general domains. 

There are different ways of trying to extend the result to the general case. One can 
use $K$  charts   to cover the boundary of $\Omega$ and one chart for the interior. 
Then, one can   use   change of coordinates to straighten out the boundary
for each chart.   One can then prove a priori estimates. However, we think 
 that one  of the disadvantage of this method  is that the proof of existence 
of approximate solutions is technical since one has to solve  $K+1$ problems 
simultaneously at each step. Here, we will present a more geometric  way 
motivated by Shatah and Zeng \cite{SZ08}.  

Recall the notations introduced in Section \ref{energy-main}.  
The main difference between the tangential and normal derivatives is the fact that 
$ |\partial^m_\beta  w |  \leq C  w  $ and that $  \frac1C \leq \partial_\zeta   w  \leq C.    $
 
The proof of the a priori estimates, namely Proposition \ref{prop} is identical. 
One has just to replace $\partial_3$ by $\partial_\zeta$ and $\partial_1, \partial_2$ 
by $\partial_\beta$ for $\beta \in \T$. 

When solving the iteration scheme, we replace the definition of $G$ by 
\begin{equation} \label{new-G}
G=    [ \sum_{\beta \in \T}  (\partial_\beta)^* \partial_\beta  - 
  w^{-\alpha} \partial_\zeta w^{1+ \alpha} \partial_\zeta 
     +\lambda ]   u  = \G  u    
\end{equation}
we recall that if $\partial_\beta = a^\beta_1(x) \partial_1 +a^\beta_2(x) \partial_2 + a^\beta_3(x) \partial_3 $
then $ (\partial_\beta)^*(.) =   \partial_1 ( a^\beta_1(x) . )    +   
  \partial_2 (  a^\beta_2(x)  . )  +   \partial_3  ( a^\beta_3(x) . ) . $
 Actually, a better choice of the operator  $\G$ is to take it to be selfadjoint in $L^2(w^\alpha dx)$ and 
hence  take 
\begin{equation} \label{new-G-self}
G=    [ \sum_{\beta \in \T}  w^{-\alpha}  (\partial_\beta)^*  w^\alpha   \partial_\beta  - 
  w^{-\alpha} (\partial_\zeta)^*  w^{1+ \alpha} \partial_\zeta 
     +\lambda ]   u  = \G  u.     
\end{equation}
As in section \ref{IS}, we solve the following approximate system for 
$G_{\nu+1}$ and $H_{\nu+1}$: 
\begin{equation}\label{approx1}
 \begin{split}
 w^\alpha\partial_t^2{G_{\nu+1}}
+\mathcal{L}^e_\nu\,{G_{\nu+1}}
+\mathcal{L}^d_\nu\,{G_{\nu+1}}
+ \mathcal{L}^c_\nu\,{H_{\nu+1}} &= w^\alpha\mathcal{R}_\nu \\
\partial_t H_{\nu+1} &= \mathcal{S}_\nu
 \end{split}
\end{equation}
where $\mathcal{R}_\nu$ and $\mathcal{S}_\nu$ are  given by by formulae that are similar to 
those in section \ref{IS} and consist of lower order terms. The rest of the proof is identical. 
We just mention that Lemma \ref{er} was proved above in the flat case. Extending it to the 
case of the operator $\G$ defined in \eqref{new-G} 
 or \eqref{new-G-self} can be easily done following the arguments in \cite{Baouendi67,BG69}.

\section{Discussion}\label{discussion}

The study of vacuum states in gas and fluid dynamics can be tracked back at least to a conference, Problems of Cosmical Aerodynamics, held in Paris,1949 where one session chaired by J. von Neumann was devoted to the existence and uniqueness or multiplicity of solutions of the aerodynamical equations \cite{vonN}. See also the review article by Serre \cite{S2}. The last question raised by von Neumann concerns the validity of the Euler system in the presence of a vacuum:\\ 

\textit{There is a further difficulty in the expansion case considered by Burgers. It was accepted that the front advances into a vacuum. It is evident that you cannot get the normal conditions of kinetic theory here either, because the density of the gas goes to zero at the front, which means that the mean free path of the molecules will go to infinity. This means that if we are in the expanding gas and approach the (theoretical) front, we will necessarily come to regions where the mean free path is larger than the distance from the front. In such regions one cannot use the hydrodynamical equations. But, as in the case of the shock wave, where ordinary conditions are reached at a distance of a few mean free paths from the shock itself, so in the case of expansion into a vacuum, at a short distance from the theoretical front, one comes into regions where the mean free path is considerably smaller than the distance from the front, and where again the classical hydrodynamical equations can be applied. If this is applied to expanding interstellar clouds, I think that in order that the classical theory be true down to 1/1000 of the density of the clouds, it is necessary that the distance from the theoretical front should be of the order of a percent of a parsec.}\\

 This statement led to interesting discussions, which are still enlightening at present, among von Neumann and several participants including Heisenberg. An important comment made by Heisenberg was the role of boundary conditions related to the boundary layer theory. And von Neumann answered: \\
 
\textit{The boundary layer theory for a fluid of low viscosity certainly furnishes a monumental warning. The naive and yet prima facie seemingly reasonable procedure would be to apply the ordinary equations of the ideal fluid and then to expect that viscosity will somehow take care of itself in a narrow region along the wall. We have learned that this procedure may lead to great errors; a complete theory of the boundary layer may give you completely different conditions also for the flow in the bulk of the field. It is possible that the same discipline will be necessary for the boundary with a vacuum.}\\
 
  It is amazing to learn that the difficulty and importance of the problem with vacuum was already discussed more than 60 years ago.  Although the mathematical theory of vacuum and related subjects in the domain of  kinetic theory and boundary layer theory is far from being complete even now, it is very exciting to make  a first step towards a better understanding of the problem. 
  
 Our methodology developed in this paper turns out to be robust and it would shed some light on other vacuum free boundary problems in more general framework such as problems as discussed in Section \ref{other}. We believe that our energy method can be adapted to the relativistic Euler equations in a physical vacuum \cite{JLM}.  And in the upcoming paper \cite{JM2}, with this new approach, we will give different proofs of the well-posedness of compressible liquid in vacuum studied in  \cite{L1} and the well-posedness of smoother vacuum states of compressible Euler flow considered in \cite{LY1}.

\end{document}